\pgfplotsset{compat=1.15}
\theoremstyle{plain}
\newtheorem{theorem}{Theorem}[section]
\newtheorem{corollary}[theorem]{Corollary}
\newtheorem{lemma}[theorem]{Lemma}
\newtheorem{proposition}[theorem]{Proposition}
\newtheorem*{theoremAH}{Theorem AH}
\newtheorem{theoremA}{Theorem}
\newtheorem{propositionA}[theoremA]{Proposition}
\theoremstyle{definition}
\newtheorem{definition}[theorem]{Definition}
\newtheorem*{definition*}{Definition}
\newtheorem{example}[theorem]{Example}
\newtheorem{remark}[theorem]{Remark}
\newtheorem{slogan}[theorem]{Slogan}
\newcommand{\Akp}{\mathbb{A}_{\Bbbk'}}
\newcommand{\PC}{\mathbb{P}_{\mathbb{C}}}
\newcommand{\PR}{\mathbb{P}_{\mathbb{R}}}
\newcommand{\Pk}{\mathbb{P}_{\Bbbk}}
\newcommand{\Pkp}{\mathbb{P}_{\Bbbk'}}
\newcommand{\Pkb}{\mathbb{P}_{\overline{\Bbbk}}}
\newcommand{\N}{\mathbb{N}}
\newcommand{\Z}{\mathbb{Z}}
\newcommand{\Q}{\mathbb{Q}}
\newcommand{\R}{\mathbb{R}}
\newcommand{\K}{\mathbb{K}}
\renewcommand{\L}{\mathbb{L}}
\newcommand{\C}{\mathbb{C}}
\newcommand{\s}{\mathbb{S}} 
\newcommand{\D}{\mathcal{D}}
\newcommand{\T}{\mathbb{T}}
\newcommand{\GC}{\mathbb{G}_{m,\mathbb{C}}} 
\newcommand{\Gk}{\mathbb{G}_{m,\Bbbk}}
\newcommand{\Gkp}{\mathbb{G}_{m,\Bbbk'}} 
\newcommand{\Gkth}{\mathbb{G}_{m,\Bbbk_3}} 
\newcommand{\Gko}{\mathbb{G}_{m,\Bbbk_1}} 
\newcommand{\Gkt}{\mathbb{G}_{m,\Bbbk_2}}
\newcommand{\Gkb}{\mathbb{G}_{m,\overline{\Bbbk}}}
\newcommand{\GR}{\mathbb{G}_{m,\mathbb{R}}} 
\newcommand{\Spec}{\mathrm{Spec}} 
\newcommand{\Proj}{\mathrm{Proj}} 
\newcommand{\Frac}{\mathrm{Frac}} 
\renewcommand{\H}{\mathrm{H}} 
\newcommand{\Aut}{\mathrm{Aut}} 
\newcommand{\Hom}{\mathrm{Hom}} 
\newcommand{\Div}{\mathrm{div}} 
\newcommand{\Gal}{\mathrm{Gal}} 
\newcommand{\Gl}{\mathrm{GL}} 
\newcommand{\RC}{{R}_{\mathbb{C} /  \mathbb{R}}} 
\newcommand{\Bbbkb}{\overline{\Bbbk}} 
\newcommand{\colim}{\mathrm{colim}}
\title[Torus actions on affine varieties over characteristic zero fields]{Torus actions on affine varieties \\ over characteristic zero fields}
\author{Pierre-Alexandre Gillard}
\thanks{The IMB receives support from  the EIPHI Graduate School (contract ANR-17-EURE-0002).
}
\address{Institut de Math\'{e}matiques de Bourgogne, UMR 5584 CNRS, Universit\'{e} Bourgogne Franche-Comt\'{e}, F-21000 Dijon, France}
\email{pierre-alexandre.gillard@u-bourgogne.fr}
\keywords{Affine variety, torus action, Galois descent, toric Del Pezzo surfaces, birational geometry}
\subjclass[2020]{%
05E14, 
06B15, 
12F10, 
12G05, 
14C20, 
14E08, 
14E25, 
14E30, 
14F22, 
14F25, 
14J45, 
14L24, 
14R05, 
14R20, 
52B20, 
}
\begin{document}

\begin{abstract}
Using Galois descent tools, we extend the Altmann-Hausen presentation of normal affine algebraic varieties endowed with an effective torus action over an algebraically closed field of characteristic zero to the case where the ground field is an arbitrary field of characteristic zero. In this context, the acting torus may have non-trivial torsors and we need additional data to encode such varieties. Finally, we focus on affine varieties endowed with a two-dimensional torus action and we provide a method for determining when a torsor is trivial, in which case the Altmann-Hausen presentation simplifies.
\end{abstract}

\maketitle

\tableofcontents

\section{Introduction}

\subsection{Aims and scope} Normal affine algebraic varieties endowed with a torus action over an algebraically closed field of characteristic zero admit a geometrico-combinatorial presentation due to Altmann and Hausen in \cite{Alt}. In this context, an $n$-dimensional torus is an algebraic group $\T$ isomorphic to $\Gk^n$.  Over a non closed field $\Bbbk$, a $\Bbbk$-torus is defined as an algebraic group $T$ such that, for some $n \in \N^*$, 
$$T_{\Bbbkb}:= T \times_{\Spec(\Bbbk)} \Spec(\Bbbkb) \cong \Gkb^n.$$ 
Over $\Bbbkb$ all tori are \emph{split}, i.e. isomorphic to $\Gkb^n$ for some $n \in \N^*$, but over $\Bbbk$ a torus may be non-split and may have non-trivial torsors. The existence of non split tori and non-trivial torsors imply to insert additional data in the geometrico-combinatorial presentation over non closed fields. 

Over the past few years, using Galois descent methods, the Altmann-Hausen presentation was extended in different directions. In 2015, Langlois  in \cite{Langlois, LangloisCorrected}  extended it  for complexity one torus actions over an arbitrary field in any characteristic using a specific construction. In the $\C/\R$ setting,  an $\R$-torus is a product of copies of the split torus $\GR$, of the real circle $\s^1= \Spec(\R[x,y]/(x^2+y^2-1))$, and of the Weil restriction $\RC(\GC)$. Contrary to $\GR$ and $\RC(\GC)$, the circle $\s^1$ has a non-trivial torsor, namely  $\Spec(\R[x,y]/(x^2+y^2+1))$.  In 2018, Dubouloz, Liendo and Petitjean focused on normal affine $\R$-varieties endowed with $\s^1 $-actions  in   \cite{Lien, Petit}, and they observed that an additional datum is needed to encode these varieties because of the existence of the non-trivial $\s^1$-torsor.  More generally, a geometrico-combinatorial presentation of normal affine $\R$-varieties endowed with an arbitrary $\R$-torus action based on \cite{Alt} was established in 2021 by the author in \cite{PA}.  These three presentations agree in the sense that they use a similar language and that we recover the first  two presentations  from the last one in the $\C/\R$ setting. 

As noted in \cite{PA}, it is natural and reasonable to expect that a general presentation of normal affine varieties endowed with torus actions over arbitrary fields of characteristic zero can be obtained by combining Altmann-Hausen theory for split torus actions together with appropriate Galois descent methods: this is precisely the goal of this article and this is the reason why we work over characteristic zero fields. We extend the Altmann-Hausen presentation to arbitrary fields of characteristic zero using a similar approach to the one adopted in \cite{PA}. However, in this context, the Galois extension $\Bbbkb/\Bbbk$ may be infinite and the description of $\Bbbk$-tori is more involved (except in dimension 1). We give a complete description of normal affine varieties endowed with a torus action over arbitrary fields of characteristic zero, and we recover the one given in \cite{PA} by letting $\Bbbk=\R$.

\subsection{Overview of the article} \label{IntroOverview}

\subsubsection*{Altmann-Hausen presentation} For the convenience of the reader, we  briefly recall the Altmann-Hausen presentation over an algebraically closed field $\Bbbk$ of characteristic zero. Let $\T \cong \Gk^n$ be a  $\Bbbk$-torus with character lattice $M$.   Let $\omega$ be a   full dimensional cone in $M_{\Q}:= M \otimes_{\Z} \Q$, let $N$ be the dual lattice of $M$, let $Y$ be a normal semi-projective $\Bbbk$-variety (i.e. $\H^0(Y, \mathcal{O}_Y)$ is a finitely generated $\Bbbk$-algebra and the affinization morphism $Y \to \Spec(\H^0(Y, \mathcal{O}_Y)) $ is projective), and let $\D := \sum \Delta_i \otimes D_i$ be a \emph{proper $\omega^{\vee}$-polyhedral divisor}. This means that the $D_i$ are prime divisors on $Y$ and the coefficients $\Delta_i$ are convex polyhedra in $N_{\Q}$ having $\omega^{\vee}$ as tail cone. Then, for every $m \in \omega \cap M$, we can evaluate $\D$ in $m$ to obtain a Weil $\Q$-divisor $\D(m) := \sum \text{min}\{ \langle m | \Delta_i \rangle \} \otimes D_i$. The proper condition on $\D$ means that, for all $m \in \omega \cap M$, $\D(m)$ is a semi-ample Cartier $\Q$-divisor, and for all $m \in \mathrm{Relint}(\omega) \cap M$, $\D(m)$ is big. From the triple $(\omega, Y, \D)$, Altmann and Hausen construct an $M$-graded $\Bbbk$-algebra:
\begin{equation*}
A[Y, \D]:= \bigoplus_{m \in \omega \cap M} \H^0(Y, \mathcal{O}_Y(\D(m))) \subset \Bbbk(Y)[M].
\end{equation*}  
The main results of \cite{Alt} can be summarized as follows:

\begin{theoremAH}[{\cite[Theorems 3.1 and 3.4]{Alt}}] Let $\T$ be a $\Bbbk$-torus.
\begin{enumerate}[leftmargin=0.75cm, label=(\roman*)]
\item The  scheme $X[Y, \D] := \Spec(A[Y, \D])$  is a normal $\Bbbk$-variety  endowed with an effective  $\mathbb{T}$-action.
\item Let $X$ be an affine normal $\Bbbk$-variety endowed with an effective $\mathbb{T}$-action. There exists a datum  $(\omega^{\vee}, Y, \D)$  as above such that $X \cong X[Y, \D]$ as $\T$-varieties.
\end{enumerate}
\end{theoremAH}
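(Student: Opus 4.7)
The two parts of Theorem AH are of different natures: (i) verifies algebraic properties of an explicit construction, while (ii) must extract the combinatorial datum $(\omega^{\vee}, Y, \D)$ from a given $\T$-variety. I would treat them separately, following the strategy of Altmann and Hausen.

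For (i), the inclusion $A[Y,\D] \subset \Bbbk(Y)[M]$ gives integrality, and the $M$-grading directly induces the $\T$-action on $X[Y,\D]$. Effectiveness of the action is equivalent to $\omega \cap M$ generating $M$ as a group, which follows from $\omega$ being full dimensional. Normality is the easiest nontrivial point: each graded piece $\H^0(Y, \mathcal{O}_Y(\D(m)))$ sits in the normal ring $\Bbbk(Y)$, so an integral equation over $A[Y,\D]$ can be checked pointwise on $Y$. The core of (i) is finite generation of $A[Y,\D]$ over $\Bbbk$. Here the properness hypotheses on $\D$ give that $\D(m)$ is semi-ample Cartier for every $m \in \omega \cap M$ and big whenever $m$ lies in the relative interior; Zariski's theorem on section rings of semi-ample divisors, combined with the finite generation of the monoid $\omega \cap M$, reduces the problem to finitely many section rings whose product generates $A[Y,\D]$.

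For (ii), the starting point is the $M$-grading $A = \bigoplus_{m \in M} A_m$ on $A := \H^0(X, \mathcal{O}_X)$ induced by the $\T$-action. I would first define $\omega \subset M_{\Q}$ as the convex cone spanned by the weights $m$ with $A_m \neq 0$; effectiveness of the action translates exactly into $\omega$ being full dimensional. The delicate step is to produce a semi-projective base variety $Y$ via an appropriate $\T$-invariant birational quotient of $X$ dominating $\Spec(A_0)$, for instance through a Chow-quotient construction or a toric quotient of $X$ by $\T$. Once $Y$ is fixed, to every prime divisor $D_i \subset Y$ I attach the concave piecewise-linear function $h_i : \omega \cap M \to \Q$, $m \mapsto \min\{\mathrm{ord}_{D_i}(f) : f \in A_m \setminus \{0\}\}$, whose concavity follows from the subadditivity of valuations on products of homogeneous elements. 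Legendre-Fenchel duality translates each $h_i$ into a polyhedron $\Delta_i \subset N_{\Q}$ with tail cone $\omega^{\vee}$. Setting $\D := \sum \Delta_i \otimes D_i$ yields the required datum, and a direct comparison of graded pieces $A_m = \H^0(Y, \mathcal{O}_Y(\D(m)))$ gives the isomorphism $X \cong X[Y, \D]$ of $\T$-varieties; properness of $\D$ is then forced by the finite generation of $A$.

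The main obstacle, as I expect it, lies in (ii): producing a single finite-type semi-projective $Y$ on which only finitely many prime divisors $D_i$ contribute a nontrivial polyhedron $\Delta_i$, and simultaneously ensuring that the resulting $\D$ is proper. Equivalently, one must show that the a priori infinite family of $\T$-invariant divisorial valuations on $\Bbbk(X)$ can be encoded by finitely many prime divisors on a single semi-projective model, which is the finiteness heart of the Altmann-Hausen construction.
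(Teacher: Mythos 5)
The paper does not actually prove Theorem AH --- it is quoted from Altmann--Hausen \cite[Theorems 3.1 and 3.4]{Alt} as background --- and your sketch follows exactly the strategy of that source: Gordan's lemma together with Zariski's finiteness for section rings of semi-ample divisors and an intersection-of-valuation-conditions argument for part (i), and the weight cone, a semi-projective (Chow/GIT-type) quotient $Y$, and the support-function/Legendre--Fenchel encoding of the $\T$-invariant divisorial valuations for part (ii). So the approach is essentially the same as the cited proof; the one caveat is that the genuinely hard points you correctly flag (finiteness of the relevant divisors on a single semi-projective model and properness of $\D$) remain at the level of a plan rather than an argument.
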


As we will see in Proposition \ref{splittorus}, this presentation extends \emph{mutatis mutandis} for split $\Bbbk$-torus actions over any field $\Bbbk$ of characteristic zero. 
In contrast, if the acting $\Bbbk$-torus $T$ is non-split, we need to insert additional data in the geometrico-combinatorial presentation. Namely, the linear Galois action on the character lattice of $T_{\Bbbkb}$,  and a datum that encodes a certain $T$-torsor.    

\subsubsection*{Galois descent tools} To extend the Altmann-Hausen presentation over an arbitrary field of characteristic zero, we use the language of $\Bbbk$-structures on $\Bbbkb$-varieties. Let $\Gamma := \Gal(\Bbbkb/\Bbbk)$ be endowed with the Krull topology. Roughly, a \emph{$\Bbbk$-structure} on an algebraic $\Bbbkb$-variety $X$ is a continuous action  $\Gamma \times X \to X$  such that the following diagram commutes for all $\gamma \in \Gamma$ (see Definition \ref{defstructure}): \hfill
\begin{center}
\begin{tikzpicture}
  \matrix (m) [matrix of math nodes,row sep=0.85em,column sep=6em,minimum width=2em]
  {          X     &     X                      \\
    \Spec(\Bbbkb)  &    \Spec(\Bbbkb)    \\
 };
  \path[-stealth]
    (m-1-1) edge node [above] {\small $\sigma_{\gamma}$} (m-1-2)
           edge  (m-2-1)
    (m-2-1) edge node [above] {\small $\Spec(\gamma)$} (m-2-2)
		(m-1-2) edge (m-2-2);
\end{tikzpicture}
\end{center}
A  \emph{$\Bbbk$-group structure} $\tau$ on an   algebraic $\Bbbkb$-group $G$ is a $\Bbbk$-structure on $G$ that preserves the group structure of $G$ (see Definition \ref{defGroupStructure}).  Let us note  that a  $\Bbbk$-group structure $\tau$ on a $\Bbbkb$-torus $\mathbb{T}$ corresponds to a linear $\Gamma$-action   $\tilde{\tau}$ on its character lattice $M$. 
There is an equivalence of categories between the category of quasi-projective algebraic $\Bbbk$-varieties (resp. algebraic $\Bbbk$-groups) and the category of quasi-projective algebraic $\Bbbkb$-varieties endowed with a $\Bbbk$-structure (resp.  algebraic $\Bbbkb$-groups endowed with a $\Bbbk$-group structure); see Proposition \ref{eqcat} for a precise statement. Therefore we will often  write $(X, \sigma)$ to refer to a  $\Bbbk$-variety and  $(\T, \tau)$ to refer to a $\Bbbk$-torus.  

\subsubsection*{Generalized Altmann-Hausen presentation}  Let $(\T , \tau)$ be a $\Bbbk$-torus and   let $M$ be the character lattice of $\T$. One of the main result of this article is the following one; it is the analog of Theorem AH over an arbitrary field $\Bbbk$ of characteristic zero. This result is based on a Galois-equivariant version of a method mentioned in \cite[\S 11]{Alt} (see Proposition \ref{ToricDown}).

\begin{theoremA}[{see Theorem \ref{Result}}] \label{ResultIntro} Let $(\T , \tau)$ be a $\Bbbk$-torus. 
\begin{enumerate}[leftmargin=0.75cm, label=(\roman*)]
\item Let $(\omega^{\vee}, Y, \D)$ be an AH-datum over $\Bbbkb$. If there exists a $\Bbbk$-structure $\sigma_Y$ on $Y$ and a map $h : \Gamma \to \Hom( \omega \cap M, \Bbbkb(Y)^*)$ such that
\begin{center}
\begin{tikzpicture}
  \matrix (m) [matrix of math nodes,row sep=0.5em,column sep=0.15em,minimum width=1em]
  { \forall m \in \omega \cap M, \ \forall \gamma \in \Gamma, &  {\sigma_Y}_{\gamma}^*(\D(m))=\D(\tilde{\tau}_{\gamma}(m)) + \Div_Y(h_{\gamma}(\tilde{\tau}_{\gamma}(m))); \text{ \ \  and \ \ }   \\
  \forall m \in \omega \cap M, \  \forall \gamma_1, \gamma_2  \in \Gamma,  &  h_{\gamma_1}(m) {\sigma_Y}_{\gamma_1}^{\sharp}(h_{\gamma_2}(\tilde{\tau}^{-1}_{\gamma_1}(m))) = h_{\gamma_1 \gamma_2}(m), \text{ \ \  \ \ \ \ \ }  \\
 };
\end{tikzpicture}
\end{center}
then $X[Y, \D]$ admits a $\Bbbk$-structure $\sigma_{X[Y, \D]}$ such that $(\T, \tau)$ acts on $( X[Y, \D], \sigma_{X[Y, \D]})$.
\smallskip
\item Let $(X, \sigma)$ be a normal affine $\Bbbk$-variety endowed with a $(\T, \tau)$-action. There exists an AH-datum $(\omega^{\vee}, Y, \D)$ together with a $\Bbbk$-structure $\sigma_Y$ on $Y$ and a map $h : \Gamma \to \Hom( \omega \cap M, \Bbbkb(Y)^*)$ satisfying the above conditions, such that $(X, \sigma) \cong ( X[Y, \D], \sigma_{X[Y, \D]})$ as $(\T, \tau)$-varieties.
\end{enumerate}
\end{theoremA}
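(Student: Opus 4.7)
The plan is to prove the two assertions by leveraging Theorem AH over $\Bbbkb$ together with a direct descent construction, exactly as suggested by the Galois-equivariant method of Proposition \ref{ToricDown}. In both parts, the key observation is that the grading $A[Y, \D] = \bigoplus_{m \in \omega \cap M} \H^0(Y, \mathcal{O}_Y(\D(m)))\chi^m$ splits into weight spaces which any $(\T,\tau)$-equivariant $\Bbbk$-structure must permute according to the linear action $\tilde{\tau}$ on $M$.

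For part (i), I construct the $\Bbbk$-structure on $X[Y,\D]$ at the level of its coordinate ring. For $\gamma \in \Gamma$ and a homogeneous element $f\chi^m \in \H^0(Y, \mathcal{O}_Y(\D(m)))\chi^m$, I define a $\gamma$-semilinear map by
\begin{equation*}
\sigma_{\gamma}^{\sharp}(f\chi^m) \;:=\; {\sigma_Y}_{\gamma}^{\sharp}(f)\cdot h_{\gamma}(\tilde{\tau}_{\gamma}(m))\cdot \chi^{\tilde{\tau}_{\gamma}(m)}.
\end{equation*}
The first compatibility condition in the statement is exactly what is needed for this image to lie in $\H^0(Y, \mathcal{O}_Y(\D(\tilde{\tau}_{\gamma}(m))))\chi^{\tilde{\tau}_{\gamma}(m)}$, and the cocycle condition unwinds into the composition law $\sigma_{\gamma_1 \gamma_2}^{\sharp} = \sigma_{\gamma_1}^{\sharp} \circ \sigma_{\gamma_2}^{\sharp}$, making $\gamma \mapsto \sigma_\gamma$ a group action. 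Continuity follows from that of $\sigma_Y$, $\tilde{\tau}$ and $h$, since every element of $A[Y,\D]$ is a finite sum of homogeneous pieces defined over a common finite Galois subextension. Equivariance for $(\T, \tau)$ is built in by the weight shift $m \mapsto \tilde{\tau}_{\gamma}(m)$, so applying $\Spec$ and the equivalence of Proposition \ref{eqcat} produces the desired $\Bbbk$-structure $\sigma_{X[Y,\D]}$.

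For part (ii), I base change $(X, \sigma)$ to $\Bbbkb$: the resulting $X_{\Bbbkb}$ is a normal affine $\Bbbkb$-variety endowed with an effective $\T_{\Bbbkb}$-action whose character lattice is $M$. Theorem AH yields an AH-datum $(\omega^{\vee}, Y, \D)$ over $\Bbbkb$ together with an $M$-graded isomorphism $\Bbbkb[X_{\Bbbkb}] \cong A[Y,\D]$. The Galois action coming from $\sigma$ is $(\T_{\Bbbkb}, \tau)$-equivariant, so it permutes weight spaces by $\tilde{\tau}$. Restricting to degree zero yields a continuous $\Gamma$-action on $\H^0(Y, \mathcal{O}_Y)$ which, combined with the semi-projective structure $Y \to \Spec(\H^0(Y, \mathcal{O}_Y))$, produces a $\Bbbk$-structure $\sigma_Y$ on $Y$ via the Galois-equivariant downgrading of Proposition \ref{ToricDown}. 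For each $\gamma$ and each $m$, the $\gamma$-semilinear isomorphism $\sigma_{\gamma}^{\sharp}$ from the weight-$m$ piece to the weight-$\tilde{\tau}_{\gamma}(m)$ piece differs from the naive candidate obtained by ${\sigma_Y}_{\gamma}^{\sharp}$ only by multiplication by a unique nonzero rational function in $\Bbbkb(Y)^*$; this defines $h_{\gamma}(\tilde{\tau}_{\gamma}(m))$. Divisorial comparison then forces the first compatibility, while the group-law identity for $\sigma_{\gamma_1 \gamma_2}^{\sharp}$ translates precisely into the cocycle condition on $h$. Applying part (i) then gives $(X[Y, \D], \sigma_{X[Y, \D]}) \cong (X, \sigma)$.

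The main obstacle, as I see it, is producing the $\Bbbk$-structure on $Y$ in part (ii). Since $Y$ is only semi-projective and is recovered from $X_{\Bbbkb}$ by a non-canonical downgrading procedure, one cannot simply take an invariant subring. The Galois-equivariant reformulation of \cite[\S 11]{Alt} provided by Proposition \ref{ToricDown} is the technical heart of the proof: it ensures that $Y$ can be chosen $\Gamma$-compatibly even though $\Gamma = \Gal(\Bbbkb/\Bbbk)$ may be profinite of infinite order. Continuity of $\sigma_Y$, of $\sigma_{X[Y,\D]}$, and of the cocycle $h$ then follows from the fact that, at each stage, all of the data descend to some finite Galois subextension of $\Bbbkb/\Bbbk$.
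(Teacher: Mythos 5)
Your proposal is correct and follows essentially the same route as the paper: part (i) is verbatim the paper's construction of the semilinear action on homogeneous pieces via $f\chi^m \mapsto {\sigma_Y}_{\gamma}^{\sharp}(f)\,h_{\gamma}(\tilde{\tau}_{\gamma}(m))\,\chi^{\tilde{\tau}_{\gamma}(m)}$, with condition (1) giving well-definedness and condition (2) the group law, and part (ii) relies, as the paper does, on the $\Gamma$-equivariant toric downgrading of Proposition \ref{ToricDown} to produce $Y$, $\sigma_Y$ and the cocycle $h$. The only imprecision is your suggestion that $\sigma_Y$ is obtained from the degree-zero part together with the affinization morphism --- a semilinear action on $\H^0(Y,\mathcal{O}_Y)$ alone does not determine one on $Y$, which is only projective over its affinization --- but you correctly identify that the actual source of $\sigma_Y$ is the $\Gamma$-equivariant embedding into $\Akb^n$ and the induced quotient construction, which is exactly the paper's (equally sketched) argument.
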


The geometrico-combinatorial presentation over an arbitrary field of characteristic zero contains a cocycle $h$. This cocycle encodes a torsor (see Remark \ref{Ltorus} for a precise statement). Therefore, this presentation simplifies, that is we \emph{can take $h=1$}, if there are no non-trivial torsors; it is the case for instance for quasi-trivial torus actions (see Definition \ref{DefQuasiTrivialNorm} and Proposition \ref{PropQuasiTrivialAH}). In the $\C/\R$ setting, this simplification occurs when the acting torus does not contain $\s^1$-factors.  Furthermore, if the acting torus is split, then we recover \emph{mutatis mutandis} the Altmann-Hausen presentation of \cite{Alt} (see Proposition \ref{splittorus}).

From the Altmann-Hausen presentation of split $\Bbbk$-torus actions, we get as expected an effective method to compute an AH-datum of a $T$-action on a $\Bbbk$-variety $X$ (see Corollary \ref{ResultSplit}). We  consider  a finite Galois extension $\Bbbk'/\Bbbk$ that splits $T$. Then, we determine an AH-datum for the $T_{\Bbbk'}$-action on $X_{\Bbbk'}$ using a $\Gal(\Bbbk'/\Bbbk)$-equivariant embedding $X_{\Bbbk'}$ in some $\Akp^n$ (see Proposition \ref{ToricDown}), and then we  deduce an AH-datum for the $T$ action on a $X$.

\subsubsection*{Two-dimensional torus actions}  

Tori of dimension $n$ over an arbitrary field $\Bbbk$ are classified by the conjugacy classes of finite subgroup of $\Gl_n(\Z)$. Therefore, a one-dimensional $\Bbbk$-torus, up to isomorphism,  is either a split $\Bbbk$-torus or a norm one $\Bbbk$-torus (see Definition \ref{DefQuasiTrivialNorm}). For instance, if $\Bbbk= \R$, the norm one $\R$-torus is $\s^1$. The classification of the conjugacy classes of finite subgroup of $\Gl_2(\Z)$ is well known and there is an explicit description of two-dimensional $\Bbbk$-tori given by Voskresenski in \cite{VosI} (see Proposition \ref{Qtori}). Based on this classification, there is a complete description of the Galois cohomology set that classify torsors appearing in Theorem \ref{ResultIntro}  in \cite[Theorems 5.3 \& 5.5]{Elizondo} (see Theorem \ref{ElizondoH1}). 

In Section \ref{Section2Torus}, using birational geometry tools for surfaces, we relate  the torsor encoded by $h$ to a certain Del Pezzo surface. Let  $(\T, \tau)$ be a two-dimensional $\Bbbk$-torus, and let $(\T, \sigma)$ be a  $(\T, \tau)$-torsor (see Remark \ref{RmkTorsors}). Then, applying  a  $(\T, \tau)$-equivariant MMP to a $(\T, \tau)$-equivariant smooth compactification of $(\T, \sigma)$, we obtain the following proposition.

\begin{propositionA}[{Proposition \ref{birgeom}}] 
Let  $(\T, \tau)$ be a two-dimensional $\Bbbk$-torus and let $(\T, \sigma)$ be a  $(\T, \tau)$-torsor. There is a minimal toric Del Pezzo $\Bbbk$-surface $X$ birational to $(\T, \sigma)$  that contains $(\T, \sigma)$ as  a $(\T, \tau)$-stable dense open subset. Moreover, we have the following possibilities for $X$:
\begin{enumerate}[leftmargin=0.75cm, label=(\roman*)]
\item $X$ is a $\Bbbk$-form  of $\Pkb^2$; or
\item $X$ is a $\Bbbk$-form of the Del Pezzo $\Bbbkb$-surface of degree $6$; or
\item $X$ is a $\Bbbk$-form  of $\Pkb^1 \times \Pkb^1$.
\end{enumerate}
\end{propositionA}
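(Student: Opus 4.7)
My plan combines a $(\T,\tau)$-equivariant MMP applied to a smooth equivariant compactification of the torsor with the classification of two-dimensional $\Bbbk$-tori from Proposition \ref{Qtori}, and matches the minimal output to the three listed forms.

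First, I would produce a smooth $(\T,\tau)$-equivariant projective compactification $\overline{X}$ of $(\T,\sigma)$. Over $\Bbbkb$ the torsor becomes the split torus, and such compactifications correspond to smooth complete fans $\Sigma$ in the cocharacter lattice $N\simeq\Z^2$. A $\Gamma$-stable such fan always exists (for instance one obtained by $\Gamma$-symmetrizing and then equivariantly star-subdividing any smooth complete fan), and the Galois-descent framework developed in the earlier sections then produces a toric $\Bbbk$-variety $\overline{X}$ containing $(\T,\sigma)$ as the open $(\T,\tau)$-orbit.

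Second, I would run a $(\T,\tau)$-equivariant MMP on $\overline{X}$. Every $(-1)$-curve on a smooth complete toric $\Bbbkb$-surface is automatically $\T$-invariant and corresponds to a ray of the fan whose removal preserves both smoothness and completeness; hence contracting $\Gamma$-orbits of pairwise disjoint $(-1)$-curves is toric, $\Gamma$-equivariant, and leaves the open orbit $(\T,\sigma)$ untouched. The procedure terminates at a smooth projective toric $\Bbbk$-surface $X$ containing $(\T,\sigma)$ as a $(\T,\tau)$-stable dense open subset and admitting no further $\Gamma$-orbit of disjoint $(-1)$-curves.

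Third, I would identify $X$ by a case-by-case analysis of minimal $\Gamma$-stable smooth complete fans in $N\simeq\Z^2$. By Proposition \ref{Qtori}, $\tilde{\tau}$ factors through one of the thirteen conjugacy classes of finite subgroups of $\Gl_2(\Z)$. Going through each class, the $\Gamma$-stable smooth complete fans admitting no $\Gamma$-orbit of rays whose removal still yields a smooth complete fan are, up to $\Gl_2(\Z)$-equivalence, exactly three: the fan of $\Pkb^2$ (three rays in a single $\Gamma$-orbit under a cyclic-of-order-$3$ action), the fan of $\Pkb^1\times\Pkb^1$ (four rays in two opposite pairs), and the hexagonal fan of the degree-$6$ Del Pezzo $\Bbbkb$-surface (six rays with dihedral symmetry). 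In each case the sum of the $\T$-invariant prime divisors equals $-K_{X_{\Bbbkb}}$ and is ample, so $X$ is a toric Del Pezzo $\Bbbk$-surface.

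The main obstacle is to rule out the Hirzebruch surfaces $\mathbb{F}_n$ with $n\geq 2$, which are minimal toric $\Bbbkb$-surfaces yet not Del Pezzo. Their fans carry very limited $\Gl_2(\Z)$-symmetries (only a $\Z/2$ swapping the two $\T$-invariant fibers), so whenever $\tilde{\tau}$ stabilizes such a fan it also stabilizes that of $\Pkb^1\times\Pkb^1$. This allows a judicious choice of $\Sigma$ in the first step, or equivalently a $(\T,\tau)$-equivariant elementary transformation of $\mathbb{F}_n$ centered at a $\Gamma$-fixed torus-invariant point off the $(-n)$-section that replaces it by $\mathbb{F}_{n-1}$ and, iterated, produces a genuine Del Pezzo model. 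Carrying out this bookkeeping while preserving compatibility with the torsor data is the most delicate aspect of the proof.
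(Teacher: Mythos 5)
Your overall strategy (equivariant compactification, then a $(\T,\tau)$-equivariant MMP contracting Galois orbits of disjoint boundary $(-1)$-curves) is the same as the paper's, but your third step contains a genuine gap that you yourself flag and then defer: the claim that the only $\Gamma$-stable smooth complete fans in $N_\Q$ with no contractible $\Gamma$-orbit of rays are those of $\Pkb^2$, $\Pkb^1\times\Pkb^1$ and the degree-$6$ Del Pezzo surface is false. The fans of the Hirzebruch surfaces $\mathbb{F}_n$, $n\geq 2$, are minimal in exactly this sense (they carry no $(-1)$-curve at all), and they are stabilized by the trivial group and by suitable order-two subgroups of $\Gl_2(\Z)$, so they can occur as $\Gamma$-stable minimal fans for the tori under consideration. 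Consequently, if the starting compactification in your first step is produced by ``$\Gamma$-symmetrizing and star-subdividing any smooth complete fan,'' nothing prevents the equivariant MMP from terminating at a form of some $\mathbb{F}_n$ with $n\geq 2$, which is not Del Pezzo; the conclusion of the proposition is simply not a formal consequence of minimality, it depends on the choice of model. Your two proposed remedies (a ``judicious choice of $\Sigma$,'' or equivariant elementary transformations $\mathbb{F}_n\dashrightarrow\mathbb{F}_{n-1}$ at Galois-stable toric points) are plausible but are precisely the part you leave unexecuted, and the second one needs a case analysis of which fixed points or orbits are available for each possible image of $\hat{\tau}$.

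The paper closes this gap at the level of the compactification itself (Lemma \ref{Compactification}): every finite subgroup of $\Gl_2(\Z)$ is, up to conjugacy, contained in one of the two maximal groups $\mathcal{G}_1=\langle x,s\rangle$ or $\mathcal{G}_2=\langle d,s\rangle$ of Proposition \ref{FiniteSubgroups}, and the hexagonal fan of the degree-$6$ Del Pezzo surface (resp.\ the square fan of $\Pkb^1\times\Pkb^1$) is stable under all of $\mathcal{G}_1$ (resp.\ $\mathcal{G}_2$). By \cite[Proposition 1.19]{Huruguen} the torsor therefore always embeds equivariantly into a $\Bbbk$-form of one of these two toric Del Pezzo surfaces, and the subsequent MMP only deletes rays from the hexagon or the square; the sub-fans one can reach are exactly those of Del Pezzo surfaces of degree $6$, $7$, $8$ or $9$, so no Hirzebruch surface $\mathbb{F}_n$ with $n\geq 2$ can ever appear and the trichotomy of the statement follows from the case-by-case contraction of Galois orbits. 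If you make your ``judicious choice'' explicit in this way (start from the hexagonal fan when the image of $\hat{\tau}$ is conjugate into $\mathcal{G}_1$, from the square fan when it is conjugate into $\mathcal{G}_2$), your argument becomes complete and coincides with the paper's; as it stands, the decisive step is missing.
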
   
 
Finally, in the Appendices, we give some technical results used to prove the main results of this article.

\bigskip
 
\noindent \textbf{Acknowledgments.}
The author  is grateful to Adrien Dubouloz and Ronan Terpereau. He thanks them for their valuable help and for proofreading this article.

\bigskip

\section{Notation} \label{SectionNotation}
 
\subsection{Setting} \textbf{In this paper, all fields have characteristic zero}. Let $\Bbbk$  be an arbitrary field. Throughout  the  paper, we call  \emph{$\Bbbk$-variety} a separated   and geometrically integral  scheme of finite type over  $\Bbbk$, and an \emph{algebraic $\Bbbk$-group} is a finite type group scheme over $\Bbbk$.  
The group of regular automorphisms of a $\Bbbk$-variety $X$ is denoted by $\Aut(X)$, and the group of regular group automorphisms of an  algebraic $\Bbbk$-group   $G$ is denoted by $\Aut_{gr}(G)$.
If a $\Bbbk$-torus acts on an affine variety, \textbf{we always assume that the torus action is effective}.   

Let $\Bbbkb$ be a fixed  algebraic closure of $\Bbbk$; the field extension $\Bbbkb/\Bbbk$ is Galois. The Galois group $\Gal(\Bbbkb/\Bbbk)$ is a profinite group endowed with the \emph{Krull topology}, which is defined as follows (see \cite[Chapter IV, \S 17]{Morandi}). A subset $S$ of $\Gal(\Bbbkb/\Bbbk)$ is open if $S=\emptyset$ or if $S= \bigcup_i \gamma_i \Gal(\Bbbk_i/\Bbbk)$ for some $\gamma_i \in \Gal(\Bbbkb/\Bbbk)$ and for some finite Galois extensions $\Bbbk_i/\Bbbk$ in $\Bbbk'$. 

In this article, we consider a non-necessarily finite  Galois extension $\Bbbk'/\Bbbk$ in $\Bbbkb$, and we denote $\Gamma:= \Gal( \Bbbk'/\Bbbk)$ its Galois group. Most of the results are stated in this general setting, but sometimes we specify $\Bbbk'=\Bbbkb$ or $\Bbbk'/\Bbbk$ to be a finite Galois extension. Recall that the set of finite Galois extension $\Bbbk_i/\Bbbk$ in $\Bbbk'$ equipped with the inclusion is a directed partially ordered set, and that $\Gamma \cong \lim \Gal( \Bbbk_i / \Bbbk)$, where the \emph{limit} is taken over all finite Galois extension $\Bbbk_i/\Bbbk$ in $\Bbbk'$ (see \cite[\S 0BMI]{Stack}, or \cite[\S I.2.4]{Berhuy}).

\subsection{Convex geometry} From here on, $N$ denotes a lattice, i.e.~a finitely generated free abelian group,  and $M := \Hom_{\Z}(N, \Z)$ denotes its dual lattice. The associated $\Q$-vector spaces are denoted by $N_{\Q} := N \otimes_{\Z} \Q$ and $M_{\Q} := M \otimes_{\Z} \Q$ respectively, and the the corresponding pairing by 
\begin{equation*}
 M \times N \to \Z, \ \ \ (u,v) \mapsto \langle u,v \rangle := u(v).
\end{equation*}
Let us recall some results of \cite[\S 1.2]{Fult}. 
A subset $\omega_N \subset N_{\Q} $ is called a \emph{convex polyhedral cone} if there exists  a finite set  $S \subset N_{\Q}$ such that 
\begin{equation*}
\omega_N  = \text{Cone}(S) := \left\{ \left. \sum \lambda_v v \ \right| \ {v \in S}, \ \lambda_v \in {\Q}_{\geq 0} \right\} \subset N_{\Q}. 
\end{equation*}
A cone $\omega_N$ is   \emph{pointed}  if it contains no line. 
For us, a \textsl{cone} in $N_{\Q}$ is always a convex polyhedral cone.
 The dual cone of $\omega_N$ is defined by $\omega_N^{\vee} := \{ \left. u \in M_{\Q} \ \right| \  \forall v \in \omega_N, \ \langle u | v \rangle \geq 0 \}$; it is a cone in $M_{\Q}$. Let  $\omega_N$ be a cone in $N_{\Q}$.   A \emph{face} $\tau_N$ of $\omega_N$ is given by $ \tau_N = \omega_N \cap u^{\perp}$, for some $u \in \omega_N^{\vee}$, where
$u^{\perp} := \{ v \in \omega_N \ | \ \forall u \in \omega_N^{\vee},  \ \langle u,v \rangle  = 0 \}$. 
Recall that a face of a cone is a cone. The relative interior  $\text{Relint}(\omega_N)$ of a cone $\omega_N  $ is obtained  by removing all   proper faces from $\omega_N$. 

A \emph{quasifan} $\Lambda$ in $ N_{\Q}$ (or in $M_{\Q}$) is a finite collection of cones in $ N_{\Q}$ (or in $M_{\Q}$)  such that,  for any $\lambda \in \Lambda$, all the faces   of $ \lambda$ belong to $\Lambda$,  and for any   $\lambda_1, \lambda_2 \in \Lambda$, the intersection $\lambda_1 \cap \lambda_2$ is a face of both $\lambda_i$. 
The \emph{support} of a quasifan is the union of all its cones. A quasifan is called a \emph{fan} if all its cones are pointed.

\section{Galois descent and algebraic tori} \label{SectionGDAT}

In this section, $\Bbbk$ is an arbitrary field, $\Bbbk'/\Bbbk$  is a non-necessarily finite Galois extension in $\Bbbkb$, and   $\Gamma:= \Gal(  \Bbbk'/\Bbbk)$ is the corresponding Galois group.  We  recall basic definitions and well-known facts about $\Bbbk$-structures on $\Bbbk'$-varieties and $\Bbbk$-group structures on algebraic $\Bbbk'$-groups   in view of studying torus actions on $\Bbbk$-varieties.

\subsection{Galois descent} \label{SubsectionGD} 

Let us briefly recall the classical correspondence between quasi-projective $\Bbbk$-varieties and quasi-projective $\Bbbk'$-varieties endowed with a $\Bbbk$-structure. For finite Galois extensions, this correspondence is a direct consequence of étale descent, and for infinite Galois extension, it is a consequence of fpqc descent (see \cite{SGA1}). 

Let $\Bbbk'/\Bbbk$ be a finite Galois extension. A descent datum on a quasi-projective $\Bbbk'$-variety $X$ corresponds to a $\Bbbk$-structure on $X$, that is an algebraic semilinear $\Gamma$-action $\sigma : \Gamma    \to \Aut( X_{/ \Bbbk})$. For a translation of the language of descent used in \cite{SGA1} to the language of $\Bbbk$-structure, see  \cite[\S 6.2 Example B]{Neron}. This translation is based on the following $\Bbbk'$-algebra isomorphism
$$\Bbbk' \otimes_{\Bbbk} \Bbbk' \to \prod_{\Gamma} \Bbbk'; \ a \otimes b \mapsto \left( a \gamma(b) \right)_{\gamma \in \Gamma}.$$   
If $\Bbbk'/\Bbbk$ is an infinite Galois extension, this $\Bbbk'$-algebra morphism is an injective morphism that is not surjective. Therefore, in this context, a  descent datum on a quasi-projective $\Bbbk'$-variety $X$ corresponds to a $\Bbbk$-structure on $X$ as defined in Definition  \ref{defstructure} (see \cite{RVanDB}, \cite[2.4 Cohomologie galoisienne]{SGA41/2}, and \cite[Lemme 2.12]{BorelSerre}). 

Every $\Bbbk'$-variety $X$ can be viewed as a $\Bbbk$-scheme   via the composition of its structure morphism $X \to \Spec(\Bbbk')$ with the morphism $\Spec(\Bbbk') \to \Spec(\Bbbk)$ induced by the inclusion $\Bbbk \hookrightarrow \Bbbk'$. We denote this scheme by $X_{/ \Bbbk}$. The Galois group $\Gamma$ acts on $\Spec(\Bbbk')$, where the action is induced by the field automorphisms $(\gamma : \Bbbk' \to \Bbbk') \in \Gamma$.   

\begin{definition}  \label{defstructure}    
\begin{enumerate}[leftmargin=0.75cm, label=(\roman*)]
\item A  \emph{$\Bbbk$-form} of a $\Bbbk'$-variety $X$ is a $\Bbbk$-variety $X_0$ together with an isomorphism $(X_0)_{\Bbbk'} := X_0 \times_{\Spec(\Bbbk)} \Spec(\Bbbk') \cong X$ of $\Bbbk'$-varieties. By abuse of notation  we will often write: $X_0$ is a $\Bbbk$-form of $X$ instead of $ ( X_0,\cong)$. 
\item  A \emph{$\Bbbk$-structure}  $\sigma$ on a $\Bbbk'$-variety $X$  is a  \emph{(continuous) algebraic semilinear} $\Gamma$-action $ \sigma : \Gamma    \to \Aut( X_{/ \Bbbk}),  \gamma   \mapsto \sigma_{\gamma}$ satisfying $\sigma_{\gamma_1 \gamma_2} = \sigma_{\gamma_2} \circ \sigma_{\gamma_1}$ for all $\gamma_1, \gamma_2 \in \Gamma$, which means that:
\begin{itemize}
\item    for each $\gamma \in \Gamma$, the following diagram commutes: 
\begin{center}
\begin{tikzpicture}
  \matrix (m) [matrix of math nodes,row sep=1em,column sep=4em,minimum width=2em]
	{ X       &                   &     X                      \\
 \Spec(\Bbbk') & &  \Spec(\Bbbk') \\
};
  \path[-stealth]
    (m-1-1) edge node [above] {\small$\sigma_{\gamma}$} (m-1-3)
           edge  (m-2-1)
    (m-2-1) edge node [above] {\small$\Spec( \gamma )$} (m-2-3)
		(m-1-3) edge  (m-2-3);
\end{tikzpicture}
\end{center}
\item there exists a finite Galois extension $\Bbbk_1/\Bbbk$ in $\Bbbk'$, and a $\Bbbk_1$-form $X_1$ of $X$ such that the restriction of $\sigma$ to $\Gal(\Bbbk'/\Bbbk_1)$ coincides with the natural $\Gal(\Bbbk'/\Bbbk_1)$-action on $(X_1)_{\Bbbk'}  \cong X$.   
\end{itemize}
\item Two $\Bbbk$-structures $\sigma$ and $\sigma'$ on $X$ are \emph{equivalent} if there exists   $\varphi \in \Aut(X) $ such that $\sigma'_{\gamma} = \varphi \circ \sigma_{\gamma}  \circ \varphi^{-1}$ for all $\gamma \in \Gamma$.
\item A   \emph{$\Bbbk$-morphism} between two $\Bbbk'$-varieties $X$ and $X'$ with $\Bbbk$-structures $ \sigma $ and $ \sigma' $ is a morphism of $\Bbbk'$-varieties $f : X \to X'$ such that $\sigma'_{\gamma} \circ f = f \circ \sigma_{\gamma}$, as morphisms of $\Bbbk$-schemes, for all $\gamma \in \Gamma$.
\end{enumerate}
\end{definition}

Observe that, if $\Bbbk'/\Bbbk$ is a finite Galois extension, the definition of a $\Bbbk$-structure of Definition \ref{defstructure} coincides with the usual one for finite Galois extension. 
We refer to the Appendix \ref{Appendix1} for more details on the link between infinite and finite Galois theory. 

If $X$ is a $\Bbbk$-variety, then $X_{\Bbbk'}$ is endowed with a canonical $\Bbbk$-structure  given by  $\gamma \in \Gamma \mapsto id \times \Spec(\gamma)$. Furthermore, if $X'$ is another $\Bbbk$-variety and $f : X \to X'$ is a morphism of $\Bbbk$-varieties, then $f \times id : X_{\Bbbk'} \to  X'_{\Bbbk'}$ is a morphism of $\Bbbk'$-varieties. The next proposition is a reformulation of the effectiveness of fpqc descent in the category of quasi-projective varieties \cite[Corollaire 7.9]{SGA1}.

\begin{proposition}  \label{eqcat}
The functor $X \mapsto  (X_{\Bbbk'}, \sigma) $, where $\sigma_{\gamma}:= id \times \Spec(\gamma)$ for all $\gamma \in \Gamma$,  induces an equivalence of categories between the  category of pairs $(X, \sigma)$ consisting of a   quasi-projective $\Bbbk'$-variety $X$ endowed with a $\Bbbk$-structure $\sigma$, and the category of quasi-projective $\Bbbk$-varieties. Moreover, equivalent $\Bbbk$-structures on $X$ correspond to isomorphic $\Bbbk$-forms of $X$.  
\end{proposition}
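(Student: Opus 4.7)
The plan is to exploit the fact, already emphasised in the paragraph preceding the statement, that Proposition \ref{eqcat} is essentially a reformulation of effective fpqc descent for quasi-projective varieties (SGA1, Corollaire 7.9), once one knows that a $\Bbbk$-structure in the sense of Definition \ref{defstructure} is the same thing as a descent datum for the faithfully flat morphism $\Spec(\Bbbk')\to\Spec(\Bbbk)$. I would argue by constructing an explicit quasi-inverse functor and then checking the additional assertion on equivalent $\Bbbk$-structures. The essential trick is to reduce the infinite Galois case to the classical finite one, which is possible because of the continuity condition built into Definition \ref{defstructure}(ii).

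First, I would construct the quasi-inverse $(X,\sigma)\mapsto X_0$ as follows. By Definition \ref{defstructure}(ii) there exist a finite Galois extension $\Bbbk_1/\Bbbk$ in $\Bbbk'$ and a $\Bbbk_1$-form $X_1$ of $X$ such that $\sigma|_{\Gal(\Bbbk'/\Bbbk_1)}$ is the canonical action on $(X_1)_{\Bbbk'}$. The induced action of the finite quotient $\Gal(\Bbbk_1/\Bbbk)\cong\Gamma/\Gal(\Bbbk'/\Bbbk_1)$ on $X_1$ is then a classical $\Bbbk$-structure in the finite setting; classical étale (Galois) descent for quasi-projective varieties yields a quasi-projective $\Bbbk$-variety $X_0$ with $(X_0)_{\Bbbk_1}\cong X_1$ equivariantly, hence $(X_0)_{\Bbbk'}\cong X$ equivariantly with respect to the full $\Gamma$-action. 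The construction is independent of the auxiliary choice of $(\Bbbk_1,X_1)$: two such choices are dominated by a common finite Galois extension, and the uniqueness part of classical Galois descent identifies the resulting $\Bbbk$-varieties canonically. Functoriality on morphisms is automatic from the universal property of fibre products and the fact that $\Bbbk$-morphisms between $(X,\sigma)$ and $(X',\sigma')$ eventually arise from morphisms defined over a common finite Galois subextension.

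Next I would check that the two functors are mutually quasi-inverse. Starting from a quasi-projective $\Bbbk$-variety $X$, one can take $\Bbbk_1=\Bbbk$ and $X_1=X$ in the reverse construction, so the composition gives back $X$ canonically. Starting from $(X,\sigma)$, the isomorphism $(X_0)_{\Bbbk'}\cong X$ constructed above is by design $\Gamma$-equivariant, hence an isomorphism in the target category. Both natural isomorphisms are readily seen to be natural in morphisms. For the final claim, observe that two $\Bbbk$-structures $\sigma,\sigma'$ on $X$ are equivalent via $\varphi\in\Aut(X)$ precisely when $\varphi$ defines a $\Bbbk$-morphism $(X,\sigma)\to(X,\sigma')$; under the equivalence of categories this corresponds exactly to an isomorphism between the two $\Bbbk$-forms.

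The only genuinely non-formal step is the reduction from infinite to finite Galois descent, which is the main obstacle. It is handled by the continuity condition in Definition \ref{defstructure}(ii) and, on the morphism side, by the observation that the relevant compatibility diagrams involve only finitely many elements of $\Gamma$ at a time, so everything descends to some finite Galois subextension where classical descent applies. Once this reduction is in place, all remaining verifications are diagram chases that reduce to the familiar finite Galois case.
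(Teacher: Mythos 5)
Your proposal is correct and follows essentially the same route as the paper: the paper proves this proposition simply by citing the effectiveness of fpqc descent for quasi-projective varieties (SGA1, Corollaire 7.9), and the reduction from the infinite to the finite Galois case via the continuity condition of Definition \ref{defstructure}(ii) that you spell out is exactly the content of Lemmas \ref{LinkFinite} and \ref{ForAll} in Appendix \ref{Appendix1}. Your treatment of the final claim (that $\varphi$ conjugating $\sigma$ into $\sigma'$ is the same as $\varphi$ being a $\Bbbk$-morphism $(X,\sigma)\to(X,\sigma')$) also matches the paper's intent.
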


Using this equivalence, we denote  $(X, \sigma) \mapsto X/\Gamma$  the corresponding inverse functor, and by abuse of notation we write $(X, \sigma)$ to refer to an algebraic $\Bbbk$-variety. 

Once we know the existence of a $\Bbbk$-structure $\sigma$ on a $\Bbbk'$-variety $X$, a Galois cohomology set can be used to parametrize the equivalence classes of $\Bbbk$-structure on $X$. First, note that $\Aut(X)$ is a discrete topological abstract group equipped with a \emph{continuous $\Gamma$-action} 
$$\gamma \cdot \varphi := \sigma_{\gamma}^{-1} \circ \varphi \circ \sigma_{\gamma}, \  \forall \gamma \in \Gamma \text{ and } \forall  \varphi \in \Aut(X).$$
Here \emph{continuous} means: for all $\varphi \in \Aut(X)$, the set $\text{Stab}_{\Gamma}(\varphi):=\{ \gamma \in \Gamma  \ | \ \gamma \cdot \varphi = \varphi \}$ is an open subgroup of $\Gamma$. Then, assume there exists another $\Bbbk$-structure $\sigma'$ on $X$. Observe that the map $c : \Gamma \to \Aut(X)$ defined for all $\gamma \in \Gamma$ by $c_{\gamma} := \sigma'_{\gamma} \circ \sigma_{\gamma}^{-1}$ is cocycle; that is a continuous map such that for all $\gamma_1, \gamma_2 \in \Gamma$,  $c_{\gamma_1 \gamma_2} = c_{\gamma_1} \circ (\gamma_1 \cdot c_{\gamma_2})$.   Two cocycles $c$ and $c'$ are \emph{equivalent} if there exists $\varphi \in \Aut(X)$ such that for all $\gamma \in \Gamma$, $c'_{\gamma} = \varphi^{-1} \circ  c_{\gamma} \circ ( \gamma \cdot \varphi)$.  The set of cocycles modulo this equivalence relation is the \emph{first pointed set of Galois cohomology}   $\H_{cont}^1(\Gamma,  \Aut(X))$, and
we have an isomorphism of pointed set 
$$\H_{cont}^1 \left( \Gamma, \Aut(X) \right) \cong  \colim \ \H^1 \left( \Gal(\Bbbk_i/\Bbbk), \Aut(X/{\Gal(\Bbbk'/\Bbbk_i)}) \right),$$ 
where the \emph{colimit} is taken over all finite Galois extensions $\Bbbk_i/\Bbbk$ in $\Bbbk'$ (see \cite[\S I.2.2, Proposition 8]{Serre}, see also \cite[Theorems I.2.8, II.3.33 and Lemma II.3.3]{Berhuy}).

\begin{proposition}[{\cite[\S III.1.3, Proposition 5]{Serre}, also \cite[Theorem 14.91]{Gortz}}] \label{PropH1}
Let $X$ be a $\Bbbk'$-variety equipped with a $\Bbbk$-structure $\sigma$. There is a bijection
\begin{align*}
\H_{cont}^1 \left( \Gamma, \Aut(X) \right) \simeq \{ \text{equivalence classes of $\Bbbk$-structure on $X$} \}; \ \
\left( \gamma \mapsto c_{\gamma} \right)  \mapsto \left( \gamma \mapsto c_{\gamma} \circ \sigma_{\gamma} \right)
\end{align*}
that sends the trivial cocycle  $\gamma \mapsto id$  to the equivalence class of $\sigma$.
\end{proposition}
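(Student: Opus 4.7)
The plan is to construct the bijection at the cocycle level via the explicit formula and then check it descends to the quotient. Given a continuous cocycle $c : \Gamma \to \Aut(X)$, I would set $\sigma^c_\gamma := c_\gamma \circ \sigma_\gamma$ and first verify that $\sigma^c$ is a $\Bbbk$-structure on $X$. The semilinearity diagram of Definition \ref{defstructure} commutes because each $c_\gamma$ is a $\Bbbk'$-automorphism (so covers $\mathrm{id}_{\Spec(\Bbbk')}$) while $\sigma_\gamma$ already covers $\Spec(\gamma)$. The composition law $\sigma^c_{\gamma_1\gamma_2} = \sigma^c_{\gamma_2} \circ \sigma^c_{\gamma_1}$ is a direct manipulation combining the cocycle identity $c_{\gamma_1\gamma_2} = c_{\gamma_1}\circ(\gamma_1\cdot c_{\gamma_2})$ with the formula $\gamma \cdot \varphi = \sigma_\gamma^{-1} \varphi \sigma_\gamma$.

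Next I would show that the map $\Phi : c \mapsto \sigma^c$ respects the two equivalence relations. If $c'_\gamma = \varphi^{-1}\circ c_\gamma \circ (\gamma \cdot \varphi)$ for some $\varphi \in \Aut(X)$, expanding the $\Gamma$-action gives $\sigma^{c'}_\gamma = \varphi^{-1}\circ \sigma^c_\gamma \circ \varphi$ for every $\gamma$, so $\sigma^{c'}$ and $\sigma^c$ are equivalent $\Bbbk$-structures; the same identity read backwards yields injectivity modulo equivalence. For surjectivity, starting from an arbitrary $\Bbbk$-structure $\sigma'$ on $X$, I would recover a preimage through the intrinsic formula $c_\gamma := \sigma'_\gamma \circ \sigma_\gamma^{-1}$. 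This lies in $\Aut(X)$ because $\sigma_\gamma$ and $\sigma'_\gamma$ both cover $\Spec(\gamma)$, so their composition covers the identity on $\Spec(\Bbbk')$; the cocycle identity follows by expanding the composition laws for $\sigma$ and $\sigma'$. By construction $\Phi(c) = \sigma'$, and the trivial cocycle $\gamma \mapsto \mathrm{id}$ maps to $\sigma$ itself, pinning down the distinguished class.

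The main obstacle will be the continuity of the cocycle produced in the surjectivity step, which is automatic for finite Galois extensions but nontrivial here because $\Gamma$ is profinite. I expect to handle it by invoking the effectiveness clause of Definition \ref{defstructure} for both $\sigma$ and $\sigma'$: this yields finite Galois extensions $\Bbbk_i/\Bbbk$ in $\Bbbk'$ together with forms $X_i/\Bbbk_i$ inducing them for $i=1,2$. After enlarging to a common finite Galois extension $\Bbbk_3$ large enough that the fixed $\Bbbk'$-isomorphism $(X_1)_{\Bbbk'} \cong X \cong (X_2)_{\Bbbk'}$ already descends to a $\Bbbk_3$-isomorphism, the restrictions of $\sigma_\gamma$ and $\sigma'_\gamma$ to $\Gal(\Bbbk'/\Bbbk_3)$ both coincide with $\mathrm{id} \times \Spec(\gamma)$ under this trivialisation, so $c_\gamma = \mathrm{id}$ on this open subgroup; left-translating by arbitrary elements of $\Gamma$ shows that $c$ is locally constant, hence continuous into the discrete group $\Aut(X)$. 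In the forward direction, effectiveness of $\sigma^c$ follows analogously by choosing a finite Galois extension on which $c$ is constant on the cosets of some open subgroup and combining with the effective descent of $\sigma$. Together these arguments close the bijection and pin down the distinguished classes, in accordance with \cite[\S III.1.3, Proposition 5]{Serre}.
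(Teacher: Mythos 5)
Your proposal is correct in substance, but note that the paper offers no proof of this proposition at all: it is quoted from \cite[\S III.1.3, Proposition 5]{Serre} and \cite[Theorem 14.91]{Gortz}. So your self-contained verification is a legitimate replacement rather than a variant of an argument in the text. The part of your write-up that carries real content is exactly the part a formal twisting argument cannot supply, namely the topological side: continuity of $c_\gamma := \sigma'_\gamma \circ \sigma_\gamma^{-1}$ in the infinite-Galois setting, and the effectivity clause of Definition \ref{defstructure} for the twisted structure $\sigma^c$. Your treatment of both is right: spreading the fixed $\Bbbk'$-isomorphisms $(X_1)_{\Bbbk'} \cong X \cong (X_2)_{\Bbbk'}$ out to a common finite Galois level $\Bbbk_3$ (possible because $X$ is of finite type, so the isomorphism involves only finitely many coefficients) forces $c$ to be trivial on the open subgroup $\Gal(\Bbbk'/\Bbbk_3)$, hence locally constant; conversely, a continuous cocycle is trivial on some open normal subgroup (constancy plus $c_e = c_e^2$), so $\sigma^c$ agrees with $\sigma$ there and inherits its finite-level model. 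This matches what the paper's own Appendix (Lemmas \ref{LinkFinite} and \ref{ForAll}) is set up to do.

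One caveat on the purely formal steps: you assert that the composition law for $\sigma^c$ and the compatibility with coboundaries follow by ``direct manipulation'' from the identities $c_{\gamma_1\gamma_2} = c_{\gamma_1} \circ (\gamma_1 \cdot c_{\gamma_2})$ and $\gamma \cdot \varphi = \sigma_\gamma^{-1} \circ \varphi \circ \sigma_\gamma$ exactly as printed. They do not: with the paper's anti-homomorphism convention $\sigma_{\gamma_1\gamma_2} = \sigma_{\gamma_2} \circ \sigma_{\gamma_1}$, the cocycle $c_\gamma = \sigma'_\gamma \circ \sigma_\gamma^{-1}$ satisfies $c_{\gamma_1\gamma_2} = c_{\gamma_2} \circ \left( \sigma_{\gamma_2} \circ c_{\gamma_1} \circ \sigma_{\gamma_2}^{-1} \right)$, not the displayed identity. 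This is an internal inconsistency of the paper's conventions rather than a flaw in your strategy, and every consistent choice yields the same cohomology set, but in a written proof you should fix one convention (e.g.\ redefine $\gamma \cdot \varphi := \sigma_\gamma \circ \varphi \circ \sigma_\gamma^{-1}$ and state the cocycle identity accordingly) and carry the computation through with it, since otherwise the twisted map $\sigma^c$ literally fails the relation $\sigma^c_{\gamma_1\gamma_2} = \sigma^c_{\gamma_2} \circ \sigma^c_{\gamma_1}$.
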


We have similar definitions and properties for algebraic groups, those are always quasi-projective varieties from Chevalley's theorem (see \cite[Corollary 1.2]{ConradChevalleyThm}). These similar definitions and properties are consequences of fpqc descent in the category of algebraic groups (see \cite[Corollaires 7.7 et 7.9]{SGA1}).

\begin{definition} \label{defGroupStructure}
\begin{enumerate}[leftmargin=0.75cm, label=(\roman*)]
\item Let $G$ be an algebraic $\Bbbk'$-group. An algebraic $\Bbbk$-group $G_0$ together with an isomorphism $(G_0)_{\Bbbk'} := G_0 \times_{\Spec(\Bbbk)} \Spec(\Bbbk') \cong G$ is called a   \emph{$\Bbbk$-form} of $G$. 
\item  A  \emph{$\Bbbk$-group} \emph{structure} $\tau$ on an algebraic $\Bbbk'$-group $G$  is a $\Bbbk$-structure $\tau : G \to G$ such that the multiplication $G \times G \to G$, the inverse $G \to G$ and the unity $\Spec(\Bbbk') \to G$ are $\Bbbk$-morphisms. 
\item Two $\Bbbk$-group structures $\tau$ and $\tau'$ on $G$ are \emph{equivalent} if there exists   $\varphi \in \Aut_{gr}(G) $ such that $\tau'_{\gamma} = \varphi \circ \tau_{\gamma} \circ \varphi^{-1}$ for all $\gamma \in \Gamma$.  
\item A   \emph{$\Bbbk$-morphism} between two   algebraic $\Bbbk'$-groups $G$ and $G'$  with $\Bbbk$-structures $ \tau $ and $  \tau' $ is a morphism of  algebraic $\Bbbk'$-groups $f : G \to G'$ such that $\tau'_{\gamma} \circ f = f \circ \tau_{\gamma}$ as morphisms of $\Bbbk$-schemes for all $\gamma \in \Gamma$.
\end{enumerate} 
\end{definition}
 
There is an equivalence of categories between the category of pairs $(G, \tau)$ consisting of an algebraic $\Bbbk'$-group endowed with a  $\Bbbk$-group structure, and  the category of algebraic $\Bbbk$-groups. This equivalence induces a one-to-one correspondence between the $\Bbbk$-forms of G, up to isomorphism in the category of   algebraic $\Bbbk$-groups, and the equivalence classes of $\Bbbk$-group structures on G.  Furthermore, if $\tau$ is a given $\Bbbk$-group structure on $G$, then $\Bbbk$-forms of $G$ are classified by $\H_{cont}^1( \Gamma, \Aut_{gr}(G))$ (see \cite[\S III.1.3, Corollary of Proposition 5]{Serre}).

\subsection{Tori} \label{SubsectionRT} 

See for instance \cite[\S 30.3.3]{ManinCubic} for a survey of the theory of tori and of  torsors. We give here the essential results for this article. See the Appendix \ref{Appendix2} for more details on $\Bbbk$-tori.

\begin{definition}      
 A \emph{$\Bbbk$-torus} $T$ is an algebraic $\Bbbk$-group  such that $T_{\Bbbkb} \cong \Gkb^n$ for some integer $n$. It is called a \emph{split $\Bbbk$-torus} if $T \cong \Gk^n$ for some integer $n$. 
\end{definition}

Since the $\Bbbk$-algebra $\Bbbk[T]$ of any $\Bbbk$-torus $T$ is finitely generated, there exists a finite  Galois   extension $\Bbbk_1/\Bbbk$ in $\Bbbkb$ such that $T_{\Bbbk_1}$ is a split torus (see \cite[Proposition 1.2.1]{Ono}, or \cite[Lemmas 0EXM and 09DT]{Stack}).

Observe that there is a one-to-one correspondence between $n$-dimensional split $\Bbbk$-tori and rank $n$ lattices. Indeed, to a lattice $M \cong \Z^n$ is associated the split torus $\Spec(\Bbbk[M])$, where $\Bbbk[M] := \{ \sum_{\text{finite}} a_m \chi^m \ | \ a_m \in \Bbbk,  m \in M \}$ is a $\Bbbk$-algebra such that $\chi^{m+m'} = \chi^m \chi^{m'}$. Conversely, to a split torus $\T$ is associated its character lattice $\Hom_{gr}( \T, \Gk) \cong \Z^n$. In the rest of the paper, we will write $\T = \Spec(\Bbbk[M])$ to refer to a split $\Bbbk$-torus. 
 
\begin{remark}[$\Gamma$-action on the character lattice] \label{representation}
Let $\tau$ be a $\Bbbk$-group structure on $\T=\Spec(\Bbbk'[M])$. Then $\tau$  induces a $\Gamma$-representation $\tilde{\tau} : \Gamma  \to \Gl(M)$, and a dual $\Gamma$-representation   $\hat{\tau} : \Gamma \to \Gl(N)$,   such that $\tilde{\tau}_{\gamma_1 \gamma_2} = \tilde{\tau}_{\gamma_1} \circ \tilde{\tau}_{\gamma_2}$ and  $\hat{\tau}_{\gamma_1 \gamma_2} = \hat{\tau}_{\gamma_2} \circ \hat{\tau}_{\gamma_1}$ for all $\gamma_1, \gamma_2 \in \Gamma$. Indeed, one can shows that
\begin{align*}
\tau^{\sharp} :\Gamma   \to \Aut_{\Bbbk} \left( \Bbbk'[M] \right), \ \ \   
\gamma   \mapsto  \left(   a_m \chi^m  \mapsto  \gamma(a_m) \chi^{\tilde{\tau}_{\gamma}(m)} \right).
\end{align*} 
Fix a $\Z$-basis of $M$. We get isomorphisms $M \cong \Z^n$, $\Gl(M) \cong \Gl_n(\Z)$,  and $\T \cong \Gk^n$, for some $n \in \N^*$. Two $\Gamma$-representations $\rho$ and $\rho'$ on $\Gl_n(\Z)$ are equivalent if there exists $P \in \Gl_n(\Z)$ such that $\rho'(\gamma) = P \circ \rho(\gamma) \circ P^{-1}$ for all $\gamma \in \Gamma$.
There is a one-to-one correspondence between  $\Bbbk$-group structures on $\Gkp^n$ and faithful $\Gamma$-representations  in $\Gl_n(\Z)$. Furthermore, equivalent classes of $\Bbbk$-group structures on $\Gkp^n$ correspond to equivalent classes of $\Gamma$-representations in $\Gl_n(\Z)$.
Finally, note that $n$-dimensional $\Bbbk$-tori are classified by $\H_{cont}^1( \Gal(\Bbbkb/\Bbbk), \Aut_{gr}(\Gkb^n))$.
\end{remark}

\begin{example} \label{ExSplit} The  split $\Bbbk$-torus $\Gk^n = \Spec(\Bbbk[M])$ corresponds to $(\Gkp^n, \tau_0)$, where $\tau_0$ is the $\Bbbk$-group structure on $\Gkp^n = \Spec(\Bbbk'[M])$ such that the induced $\Gamma$-action on $M$ is trivial:
\begin{align*}
\tau_0^{\sharp} : \Gamma  \to \Aut_{\Bbbk} \left( \Bbbk'[M] \right); \ \
\gamma  \mapsto  \left( a_m \chi^m  \mapsto \gamma(a_m) \chi^m \right).
\end{align*}
\end{example}

\begin{definition}[{\cite[III.3]{ColliotThNotes}, \cite[\S 2]{ColliotTh}}] \label{DefQuasiTrivialNorm}
\begin{enumerate}[leftmargin=0.75cm, label=(\roman*)]
\item A $\Bbbk$-torus $T$ is a \emph{Weil restriction $\Bbbk$-torus} if there exists a finite Galois extension $\Bbbk'/\Bbbk$ in $\Bbbkb$ such that $T \cong  \text{R}_{\Bbbk'/\Bbbk}(\Gkp^n)$ (see for instance \cite[\S 7.6]{Neron} and \cite[\S A.5]{ConradBrian}).
\item Let $T$ be $\Bbbk$-torus that splits over a finite Galois extension $\Bbbk_1/\Bbbk$. It is  called a \emph{quasi-trivial $\Bbbk$-torus} if there exits a basis of the character lattice $M$ of $T_{\Bbbk_1} \cong \Gko^n$ that is permuted by $\Gal(\Bbbk_1/\Bbbk)$. A quasi-trivial $\Bbbk$-torus is isomorphic to a product of tori  $\text{R}_{\Bbbk'/\Bbbk}(\Gkp^n)$, where $\Bbbk'$ are finite Galois extensions of $\Bbbk$ in $\Bbbkb$. Note that a split $\Bbbk$-torus is a quasi-trivial $\Bbbk$-torus. 
\item Let $\Bbbk'/\Bbbk$ be a finite Galois extension of degree $d$. There is a surjective $\Bbbk$-group morphism $N_{\Bbbk'/\Bbbk} : \text{R}_{\Bbbk'/\Bbbk}(\Gkp) \to \Gk$. The kernel is a torus of dimension $d-1$ that splits over $\Bbbk'$,  called the \emph{norm  one torus}  and denoted by $\text{R}^{(1)}_{\Bbbk'/\Bbbk}(\Gkp)$. That is,  the following sequence is exact:
$$1 \longrightarrow \text{R}^{(1)}_{\Bbbk'/\Bbbk} \left( \Gkp \right) \longrightarrow \text{R}_{\Bbbk'/\Bbbk} \left( \Gkp \right) \longrightarrow  \Gk \longrightarrow 1.$$
A $\Bbbk$-torus $T$ is a \emph{norm one $\Bbbk$-torus} if there exists a finite Galois extension $\Bbbk'/\Bbbk$ in $\Bbbkb$ such that $T \cong  \text{R}^{(1)}_{\Bbbk'/\Bbbk}(\Gkp^n)$. 
\end{enumerate} 
\end{definition}

These tori are also the building blocks of any $\R$-torus and of any two-dimensional $\Bbbk$-torus (see Proposition \ref{Qtori}). 

\begin{example} \label{ExEquationNormOne} Let $\Bbbk'/\Bbbk$ be a finite Galois extension of degree $d$.  Then $N_{\Bbbk'/\Bbbk}(\Bbbk) : (\Bbbk')^* \to \Bbbk^*$ corresponds to the usual norm map of a field extension (see for instance \cite[Chapter VI \S 5]{Lang}). As usual, if  $\omega_1, \dots, \omega_d$ is $\Bbbk$-basis of $\Bbbk'$, we denote $\Xi := \omega_1x_1 + \dots + \omega_n x_d \in \Bbbk'[x_1, \dots , x_d]$ and $N_{\Bbbk'/\Bbbk}(\Xi) \in \Bbbk[x_1, \dots, x_d]$ be the induced form of degree $d$ (see \cite{Flanders}).   Then, 
$$\text{R}^{(1)}_{\Bbbk'/\Bbbk} \left( \Gkp \right) = \Spec \left( \Bbbk[x_1, \dots, x_d] / (N_{\Bbbk'/\Bbbk}(\Xi) -1) \right).$$
For instance, the real circle $\s^1:=\Spec(\R[x,y]/(x^2+y^2-1))$ is the norm one $\R$-torus.
\end{example}


\begin{example} \label{ExNorm1deg3} 
Let $\Bbbk:= \C(t)$ and $\Bbbk':= \Bbbk[u]/(u^3-t)$. The extension $\Bbbk'/\Bbbk$ is Galois with Galois group $\Gamma \cong \Z/3\Z$. We obtain:
$$\text{R}_{\Bbbk'/\Bbbk} \left( \Gkp \right) = \Spec \left( \dfrac{\Bbbk[x_1,y_1,z_1,x_2,y_2,z_2]}{(x_1x_2 + ty_1z_2 + tz_1y_2 -1, x_1y_2 + x_2y_1 + t z_1z_2, x_1z_2+x_1z_2+x_2z_1)}\right).$$
The corresponding norm one torus of dimension 2 is
$$\text{R}^{(1)}_{\Bbbk'/\Bbbk} \left( \Gkp \right) = \Spec(\Bbbk[x,y,z]/(x^3 + ty^3 + t^2z^3 - 3txyz-1)).$$
\end{example}

\begin{example}[{Biquadratic field extension}] \label{ExampleBiquadratic} 
Recall that $\Bbbk'/\Bbbk$ is a Galois extension with Galois group isomorphic to the Klein group $\Z/2\Z \times \Z/2\Z$ if and only if $\Bbbk'=\Bbbk(\sqrt(a), \sqrt(b))$ for some $a,b \in \Bbbk$ such that none of $a$, $b$, or $ab$ is a square in $\Bbbk$. So, let $a$, $b \in \Bbbk$ satisfying this property. Then, $\{ 1, \sqrt{a}, \sqrt{b}, \sqrt{ab}\}$ is a $\Bbbk$-basis of $\Bbbk'$ and the Galois group is $\{ id, \gamma_1, \gamma_2, \gamma_1 \gamma_2\}$, where 
$\gamma_1 : \sqrt{a} \mapsto -\sqrt{a},  \ \sqrt{b}  \mapsto  \sqrt{b}$ and
$\gamma_2 : \sqrt{a}  \mapsto \sqrt{a}, \ \sqrt{b}  \mapsto  -\sqrt{b}$.
Then, $N_{\Bbbk'/\Bbbk}(\Xi) = \prod_{\gamma \in \Gamma} (x +  \gamma  (\sqrt{a})y + \gamma  (\sqrt{b})z + \gamma  (\sqrt{ab})w)$. Therefore, the norm one $\Bbbk$-torus $\text{R}^{(1)}_{\Bbbk'/\Bbbk}(\Gkp)$ of dimension 3 is defined by the equation:
$$x^4+a^2y^4+b^2z^4+a^2b^2w^4-2ax^2y^2-2bx^2z^2-2ab(x^2w^2+y^2z^2)-2a^2by^2w^2-2ab^2z^2w^2 +8abxyzw=1.$$
\end{example}

\subsection{Torsors and toric varieties}

Split toric $\Bbbk$-varieties, that is $\Bbbk$-varieties endowed with an action of a split $\Bbbk$-torus $\T=\Spec(\Bbbk[M])$ having a dense open orbit isomorphic to $\T$ itself,   are determined by a fan in $N_{\Q}$; the $\Q$-vector space associated to the cocharacter lattice of $\T$ (see for instance \cite[\S 2.1]{Elizondo}). If the acting  torus is a non-necessarily split $\Bbbk$-torus, we cannot define such lattices, and the definition of toric $\Bbbk$-varieties we use in this paper involves the notion of torsor.

\begin{definition} \label{DefTorsor}
Let $T$ be a $\Bbbk$-torus. A $T$-torsor is a $\Bbbk$-variety $V$ together with a left action $\mu : T \times V \to V$ such that the map $(\mu, pr_2 ) : T \times V \to V \times V$ is an isomorphism. Two $T$-torsors $V$ and $V'$ are isomorphic if there exists a $T$-equivariant isomorphism $V \to V'$ of $\Bbbk$-varieties. A $T$-torsor is trivial if it is isomorphic to $T$ acting on itself by translation.
\end{definition}

A $T$-torsor $V$ is trivial if and only if the set $V(\Bbbk)$ of $\Bbbk$-points is not empty. In particular, any torsor over an algebraically closed field is trivial. Furthermore, by Hilbert's 90 Theorem,  any $\Gk^n$-torsor is trivial, and by \cite[III.3]{ColliotThNotes}, a quasi-trivial torus has no non-trivial torsors (see \cite[\S 2, \S 5]{ColliotTh}). 

\begin{remark} \label{RmkTorsors}
In our setting, there is an equivalent definition of a $T$-torsor. A $T$-torsor is a $\Bbbk$-variety $V$ such that $T_{\Bbbkb} \cong V_{\Bbbkb}$ and such that the action $\mu_{\Bbbkb} : T_{\Bbbkb} \times V_{\Bbbkb} \to V_{\Bbbkb}$ corresponds to the action by translation on $T_{\Bbbkb}$.
Since any $\Gkp^n$-torsor is trivial, a $(\Gkp^n, \tau)$-torsor  is a $\Bbbk$-variety $(\Gkp^n, \sigma)$ endowed with a $(\Gkp^n, \tau)$-action, where  $\sigma$ is a $\Bbbk$-structure on $\Gkp^n$.
\end{remark}

Since a $(\Gkp^n, \tau)$-torsor is a $\Bbbk$-form of $\Gkp^n$ viewed as a $\Gkp^n$-variety for the usual action by translation, from Proposition \ref{PropH1} and Remark \ref{RmkTorsors} we obtain the following result. 

\begin{proposition}  
Let   $(\T=\Spec(\Bbbk'[M]), \tau)$  be a $\Bbbk$-torus. Consider the continuous $\Gamma$-action on the group $\Aut^{\T}(\T)$ of $\T$-equivariant $\Bbbk'$-automorphisms of $\T$ given by $\gamma \cdot \varphi = \tau_{\gamma}^{-1} \circ \varphi \circ \tau_{\gamma}$. There is a functorial bijection 
\begin{align*}
\H_{cont}^1 \left( \Gamma, \Aut^{\T}(\T) \right) &\simeq \{ \text{isomorphism classes of $T$-torsors} \}; \\
\left( \gamma \mapsto \varphi_{\gamma} \right)  &\mapsto \big( \T, \ \left( \gamma \mapsto \varphi_{\gamma} \circ \sigma_{\gamma} \right) \big)
\end{align*}
that sends the trivial cocycle to a trivial $(\T=\Spec(\Bbbk'[M]), \tau)$-torsor.
\end{proposition}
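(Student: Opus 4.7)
The plan is to reduce the statement to Proposition \ref{PropH1} applied in the category of $\T$-varieties, using Remark \ref{RmkTorsors} to reinterpret torsors as $\Bbbk$-structures.

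First, I would set up the translation. By Remark \ref{RmkTorsors}, a $(\T,\tau)$-torsor is the same data as a pair $(\T,\sigma)$, where $\sigma$ is a $\Bbbk$-structure on the quasi-projective $\Bbbk'$-variety $\T$ such that the translation action $\mu\colon \T\times\T\to \T$ is a $\Bbbk$-morphism (where the source carries $\tau$ on the first factor and $\sigma$ on the second). The trivial torsor corresponds to the distinguished structure $\sigma=\tau$ itself. Moreover, an isomorphism of two such torsors $(\T,\sigma)\cong (\T,\sigma')$ is, by definition, a $\T$-equivariant $\Bbbk$-morphism, i.e.~an element $\varphi\in\Aut^{\T}(\T)$ intertwining $\sigma$ and $\sigma'$. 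Thus isomorphism classes of $(\T,\tau)$-torsors are in bijection with equivalence classes (via $\Aut^{\T}(\T)$) of $\Bbbk$-structures on $\T$ making the translation a $\Bbbk$-action.

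Next, I would apply Proposition \ref{PropH1} in the category of $\T$-varieties. The object $\T$ (with translation) already admits the base $\Bbbk$-structure $\tau$, and $\Aut^{\T}(\T)$ is a discrete abstract group carrying the continuous $\Gamma$-action $\gamma\cdot\varphi=\tau_{\gamma}^{-1}\circ\varphi\circ\tau_{\gamma}$ prescribed in the statement. The $\T$-equivariant version of Proposition \ref{PropH1} then classifies equivalence classes of $\T$-equivariant $\Bbbk$-structures on $\T$ by $\H^{1}_{cont}(\Gamma,\Aut^{\T}(\T))$, the bijection sending a cocycle $\gamma\mapsto\varphi_{\gamma}$ to the structure $\gamma\mapsto\varphi_{\gamma}\circ\tau_{\gamma}$, and the trivial cocycle to $\tau$ (the trivial torsor). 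Combined with the identification of the previous paragraph, this yields the desired bijection.

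The steps that require care, and which I expect to be the only real bookkeeping obstacle, are: (i) verifying that the cocycle condition on $\gamma\mapsto\varphi_{\gamma}$ corresponds exactly to the semilinearity relation $\sigma_{\gamma_{1}\gamma_{2}}=\sigma_{\gamma_{2}}\circ\sigma_{\gamma_{1}}$ for the twisted structure $\sigma_{\gamma}:=\varphi_{\gamma}\circ\tau_{\gamma}$ (this is where the ordering convention and the formula for the $\Gamma$-action on $\Aut^{\T}(\T)$ must be matched precisely); (ii) checking that if $\sigma_{\gamma}=\varphi_{\gamma}\circ\tau_{\gamma}$ is a $\Bbbk$-structure, then automatically the translation action $\mu$ is a $\Bbbk$-morphism for $(\tau,\sigma)$ on the source and $\sigma$ on the target; this boils down to the fact that $\varphi_{\gamma}$, being $\T$-equivariant, is translation by an element $t_{\gamma}\in\T(\Bbbk')$, so $\mu\circ(\tau_{\gamma}\times\sigma_{\gamma})=\sigma_{\gamma}\circ\mu$ follows from the group axioms; (iii) the continuity of $\gamma\mapsto\varphi_{\gamma}$ matching the continuity built into the definition of a $\Bbbk$-structure, which follows from the fact that both $\sigma$ and $\tau$ descend to a common finite Galois subextension.

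Finally, functoriality of the bijection (in the $\Bbbk$-torus $T$, or in base-change $\Bbbk\hookrightarrow \Bbbk_{1}$) is inherited from the functoriality of Proposition \ref{PropH1} and of the colimit presentation of $\H^{1}_{cont}$ recalled before that proposition, so no further argument is needed.
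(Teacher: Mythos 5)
Your proposal is correct and follows essentially the same route as the paper, which derives this proposition directly by combining Proposition \ref{PropH1} (applied to $\T$ with the translation action and base structure $\tau$) with the reinterpretation of torsors as compatible $\Bbbk$-structures from Remark \ref{RmkTorsors}. The additional bookkeeping you flag — matching the cocycle condition to the semilinearity convention, and observing that $\T$-equivariance of the $\varphi_{\gamma}$ (i.e.\ that each is a translation) makes the action automatically a $\Bbbk$-morphism — is exactly the verification the paper leaves implicit.
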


Torsors of norm one tori are described by the next lemma.   

\begin{lemma}[{\cite[III.3]{ColliotThNotes}, \cite[\S 2]{ColliotTh}}] \label{H1Norm1}
Let $\Bbbk'/\Bbbk$ be a finite Galois extension, and let $T \cong \text{R}_{\Bbbk'/\Bbbk}^{(1)}(\Gkp)$. Then
$$\H^1  \left( \Gamma,  \Aut^{T_{\Bbbk'}}(T_{\Bbbk'}) \right) \cong \Bbbk^* / \mathrm{Im} \left( N_{\Bbbk'/\Bbbk}(\Bbbk) \right),$$
where $N_{\Bbbk'/\Bbbk}(\Bbbk) : \text{R}_{\Bbbk'/\Bbbk}(\Gkp)(\Bbbk) = (\Bbbk')^* \to \Gk(\Bbbk) = \Bbbk^*$ is the norm map obtained from the functor $N_{\Bbbk'/\Bbbk}$ mentioned in Definition \ref{DefQuasiTrivialNorm}. Furthermore (see \cite[Corollary 4.4.10]{GilleSZ}), if the extension $\Bbbk'/\Bbbk$ is cyclic, there is a canonical isomorphism 
$$\Bbbk^* / \mathrm{Im} \left( N_{\Bbbk'/\Bbbk}(\Bbbk) \right) \cong \mathrm{Br}(\Bbbk'/\Bbbk),$$
where $\mathrm{Br}(\Bbbk' /\Bbbk)$ is the kernel of the morphism $\mathrm{Br}(\Bbbk) \to \mathrm{Br}(\Bbbk')$ and  $\mathrm{Br}(\Bbbk)$ is the Brauer group of $\Bbbk$. 
\end{lemma}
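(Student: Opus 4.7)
The plan is to reduce the statement to a standard calculation in Galois cohomology built from the defining exact sequence of the norm one torus. The first step is to identify the coefficient group $\Aut^{T_{\Bbbk'}}(T_{\Bbbk'})$ with $T(\Bbbk')$ as a $\Gamma$-module. Since $T_{\Bbbk'}$ is acting on itself by translation, any $T_{\Bbbk'}$-equivariant $\Bbbk'$-automorphism is right-translation by a unique $\Bbbk'$-point, yielding a group isomorphism $\Aut^{T_{\Bbbk'}}(T_{\Bbbk'}) \cong T_{\Bbbk'}(\Bbbk') = T(\Bbbk')$. A short check shows that the $\Gamma$-action $\gamma \cdot \varphi = \tau_{\gamma}^{-1} \circ \varphi \circ \tau_{\gamma}$ of the preceding proposition corresponds (up to inversion in the group, which does not affect cohomology) to the natural Galois action of $\Gamma$ on $T(\Bbbk')$ coming from the fact that $T$ is defined over $\Bbbk$.

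Next, I would apply Galois cohomology to the defining short exact sequence
$$1 \longrightarrow T \longrightarrow \text{R}_{\Bbbk'/\Bbbk}(\Gkp) \longrightarrow \Gk \longrightarrow 1$$
evaluated at $\Bbbk'$. The long exact sequence begins with $\Gamma$-invariants, where the middle map identifies with the classical norm $N_{\Bbbk'/\Bbbk}(\Bbbk) : (\Bbbk')^* \to \Bbbk^*$. The key cohomological input is that
$$\H^1\bigl(\Gamma, \text{R}_{\Bbbk'/\Bbbk}(\Gkp)(\Bbbk')\bigr) = 0.$$
This follows from Shapiro's lemma together with Hilbert 90: via $\Bbbk' \otimes_{\Bbbk} \Bbbk' \cong \prod_{\Gamma} \Bbbk'$ (recalled in \S\ref{SubsectionGD}), the module $\text{R}_{\Bbbk'/\Bbbk}(\Gkp)(\Bbbk') \cong (\Bbbk' \otimes_{\Bbbk} \Bbbk')^*$ is coinduced from the trivial subgroup, so its $\H^1$ equals $\H^1(1, (\Bbbk')^*) = 0$. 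Hence the long exact sequence collapses to
$$\Bbbk^* / \mathrm{Im}(N_{\Bbbk'/\Bbbk}(\Bbbk)) \xrightarrow{\ \sim\ } \H^1\bigl(\Gamma, T(\Bbbk')\bigr) = \H^1\bigl(\Gamma, \Aut^{T_{\Bbbk'}}(T_{\Bbbk'})\bigr),$$
which is the first claim.

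For the second statement, assume $\Bbbk'/\Bbbk$ is cyclic. Then Tate cohomology of the finite cyclic group $\Gamma$ with coefficients in $(\Bbbk')^*$ is $2$-periodic, giving a canonical isomorphism $\widehat{H}^0(\Gamma, (\Bbbk')^*) \cong \widehat{H}^2(\Gamma, (\Bbbk')^*)$. The left-hand side is by definition $\Bbbk^* / N_{\Bbbk'/\Bbbk}((\Bbbk')^*)$, and the right-hand side equals $\H^2(\Gamma, (\Bbbk')^*)$ because $\Gamma$ is finite; by the classical interpretation of relative Brauer groups this is $\mathrm{Br}(\Bbbk'/\Bbbk)$, see \cite[Corollary 4.4.10]{GilleSZ} as cited.

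The main obstacle is the bookkeeping at the first step: one must verify carefully that the $\Gamma$-action on $\Aut^{T_{\Bbbk'}}(T_{\Bbbk'})$ inherited from the $\Bbbk$-structure $\tau$ matches the natural Galois action on $T(\Bbbk')$, so that the two cohomology sets truly coincide. Once this identification is in place, the remainder of the proof is a direct application of standard Galois cohomological machinery (Hilbert 90, Shapiro's lemma, and Tate periodicity).
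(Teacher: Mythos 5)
Your argument is correct and is precisely the standard computation: identify $\Aut^{T_{\Bbbk'}}(T_{\Bbbk'})$ with $T(\Bbbk')$ carrying the natural Galois action, take the long exact cohomology sequence of $1 \to T \to \text{R}_{\Bbbk'/\Bbbk}(\Gkp) \to \Gk \to 1$, kill $\H^1$ of the middle term by Shapiro's lemma, and invoke Tate periodicity in the cyclic case. The paper gives no proof of this lemma — it simply cites Colliot-Th\'el\`ene's notes and Colliot-Th\'el\`ene--Sansuc, where exactly this argument appears — so your write-up supplies the intended proof, with the one bookkeeping point (matching the twisted action $\gamma \cdot \varphi = \tau_{\gamma}^{-1} \circ \varphi \circ \tau_{\gamma}$ with the natural action on $T(\Bbbk')$) correctly flagged and harmless.
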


\begin{example} \label{ExampleTorsorNorm}
We pursue Example \ref{ExEquationNormOne}.  
Let $\alpha \in \Bbbk^*$. Then, the $\Bbbk$-variety defined by the equation $N_{\Bbbk'/\Bbbk}(\Xi) = \alpha$ is a $\text{R}^{(1)}_{\Bbbk'/\Bbbk}(\Gkp)$-torsor, and any $\text{R}^{(1)}_{\Bbbk'/\Bbbk}(\Gkp)$-torsor has this form. By Lemma  \ref{H1Norm1}, this torsor is trivial if and only if $\alpha \in \mathrm{Im}( N_{\Bbbk'/\Bbbk}(\Bbbk))$. For instance:
\begin{enumerate}[leftmargin=0.75cm, label=(\roman*)]
\item If $\Bbbk'=\C$ and $\Bbbk=\R$, then $-1 \notin \mathrm{Im}( N_{\Bbbk'/\Bbbk}(\Bbbk))$ and $\Spec(\R[x,y] / (x^2 + y^2 + 1))$ is, up to isomorphism, the only  non-trivial $\s^1$-torsor (see \cite[Proposition 3.1]{Lien});
\item If $\Bbbk'=\Bbbk(\sqrt{13}, \sqrt{17})$ and $\Bbbk=\Q$, then for instance $-1; 25; 49  \notin \mathrm{Im}( N_{\Bbbk'/\Bbbk}(\Bbbk)$ (see Example \ref{ExampleBiquadratic} and \cite[Example 5.3]{Cassels});
\end{enumerate}
\end{example}

\begin{example} \label{ExNorm1deg2'}
We pursue Example \ref{ExNorm1deg3}. Since $\Bbbk$ is an extension of $\C$ of transcendence degree one, then $\mathrm{Br}(\Bbbk)= \{ 1 \}$ (see \cite[Chapter X, \S 7]{SerreLocal}). Therefore $\text{R}^{(1)}_{\Bbbk'/\Bbbk}(\Gkp)$ has no non-trivial torsors.
\end{example}

\begin{definition}
A \emph{toric $\Bbbk$-variety}  is a normal $\Bbbk$-variety $V$ such that there exits a $\Bbbk$-torus $T$ acting effectively on $V$ with a dense open orbit $U$.  Note that $U$ is a $T$-torsor.\\
A toric $\Bbbk$-variety $V$ is called a \emph{split toric $\Bbbk$-variety} if $T$ is a split $\Bbbk$-torus (i.e. $T \cong \T = \Spec(\Bbbk[M])$). In this case, since there is no non-trivial torsors (by Hilbert's 90 Theorem), $\T$ acts on $V$ with a dense open orbit $U \cong \T$ and there exists a fan $\Lambda$ in $N_{\Q}$ such that $V \cong V_{\Lambda}$ (see \cite[\S 2.1]{Elizondo}).
\end{definition}

\section{Altmann-Hausen presentation over arbitrary fields} \label{SectionAH}

In this section, we extend the Altmann-Hausen presentation of torus actions on affine varieties over algebraically closed fields to arbitrary fields (see Theorem \ref{Result}). We show that the cocycle appearing in the presentation over non closed fields encodes a torsor, and a simpler presentation is possible when this torsor is trivial. Theorem \ref{Result} generalizes \cite[Theorems 4.3 \& 4.6]{PA}, those treat the $\C/\R$ case, and \cite[Theorem 5.10]{Langlois} that focus on complexity one torus actions (see Remark \ref{RmkLanglois}). 

\subsection{Altmann-Hausen presentation over an arbitrary field}
The Altmann-Hausen presentation is introduced in \cite{Alt}. See also \cite[\S 3 \& \S 4]{PA} for a detailed summary of the Altmann-Hausen presentation.

Let $\Bbbk$ be an arbitrary field, let $\Bbbk'=\Bbbkb$, and let $\Gamma:=\Gal(\Bbbkb/\Bbbk)$.   
Let $\T =\Spec(\Bbbkb[M])$ be a $\Bbbkb$-torus,  let $N$ be  the dual lattice of $M$, and let $X$ be an affine $\Bbbkb$-variety endowed with a $\T$-action $\mu$. The $\T$-action on $X$ corresponds to an $M$-grading of its coordinate ring,
$$\Bbbkb[X] = \bigoplus_{m \in M} \Bbbkb[X]_m,$$ 
the spaces $\Bbbkb[X]_m$ consisting of semi-invariant regular functions on $X$, that is $\Bbbkb[X]_m := \{ f \in \Bbbkb[X] \ | \ \mu^{\sharp}(f) = \chi^m\otimes f \}$,  where $\chi^m : \T \to \Gkb$ is the character associated to $m \in M$. The weight cone of the $\T$-action is the cone $\omega_M$ of $M_{\Q}:= M \otimes_{\Z} \Q$ spanned by the set $\{ m \in M \ | \ \Bbbkb[X]_m \neq 0 \}$.

Let $\omega_N$ be a pointed cone in $N_{\Q}= N \otimes_{\Z} \Q$, let $Y$ be a normal semi-projective $\Bbbkb$-variety, and let $\D := \sum \Delta_i \otimes D_i$ be a proper $\omega_N$-polyhedral divisor (see Section \ref{IntroOverview}). From the triple $(\omega_N, Y, \D)$ (called an AH-datum), Altmann and Hausen construct an $M$-graded $\Bbbkb$-algebra:
$$A[Y, \D] := \bigoplus_{m \in \omega_N^{\vee}  \cap M} \H^0(Y, \mathcal{O}_Y(\D(m))) \subset \Bbbkb(Y) [M].$$
By \cite[Theorems 3.1]{Alt}, the affine scheme $X[Y,\D] := \Spec(A[Y, \D])$ is a normal $\Bbbkb$-variety endowed with a $\T$-action of weight cone $\omega_N^{\vee}$ (see Theorem AH).

We  will state one of the main results of this article. We extend the Altmann-Hausen presentation over an arbitrary field. In this setting, the acting torus is non-necessarily split.  In the next definition, we adapt the notion of AH-datum to our setting.

\begin{definition}[{Generalized AH-datum}] 
Let $\T = \Spec(\Bbbkb[M])$ be a $\Bbbkb$-torus and   let $\tau$ be a $\Bbbk$-group structure on $\T$.  A \emph{generalized AH-datum} $(\omega_N, Y, \D, \sigma_Y, h)$ over $\overline{\Bbbk}$ is an AH-datum  $(\omega_N, Y, \D)$ over $\overline{\Bbbk}$ together with a $\Bbbk$-structure $\sigma_Y$ on $Y$ and with a map $h : \Gamma \to \Hom( \omega_N^{\vee} \cap M, \Bbbkb(Y)^*)$ such that
\begin{equation} \let\veqno\eqno \label{eqdiv}
\forall m \in \omega_M^{\vee} \cap M, \ \forall \gamma \in \Gamma, \ \ \ \ \ \ \    {\sigma_Y}_{\gamma}^*(\D(m))=\D(\tilde{\tau}_{\gamma}(m)) + \Div_Y\left(h_{\gamma} \left(\tilde{\tau}_{\gamma}(m) \right) \right), \text{ \ \  and \ \ } 
\end{equation}
\begin{equation} \let\veqno\eqno \label{eqdiv2}
\forall m \in \omega_M^{\vee} \cap M, \  \forall \gamma_1, \gamma_2  \in \Gamma, \ \ \ \ \ \ \ \      h_{\gamma_1}(m) {\sigma_Y}_{\gamma_1}^{\sharp}\left(h_{\gamma_2}\left(\tilde{\tau}^{-1}_{\gamma_1}(m)\right)\right) = h_{\gamma_1 \gamma_2}(m). \ \ \ \ \  \ \ \ \ \  \ \ \ \ \ 
\end{equation}
\end{definition}

\begin{remark} \label{Ltorus} Let $(\T = \Spec(\Bbbkb[M]), \tau)$ be a $\Bbbk$-torus, let $(\omega_N, Y, \D, \sigma_Y, h)$ be a generalized AH-datum, let $\L:= \Bbbkb(Y)$, and let $\K=\L^{\Gamma}$. By Lemma \ref{Artin}, the extension $\L/\K$ is Galois with Galois group $\Gamma$. Let $G:= \Hom_{gr}( M, \L^*)$ be endowed with the continuous $\Gamma$-action  $\gamma \cdot f := {\sigma_Y}_{\gamma}^{\sharp} \circ f \circ \tilde{\tau}_{\gamma}^{-1}$. Note that the map $h$ mentioned in  Theorem \ref{Result} is a cocycle, that is $h \in \H^1_{cont}( \Gamma, G)$. 
Let $\T_{\L}:=\Spec(\L[M])$ be the $\L$-torus associated to $M$, and let $\tau_{\L}$ be the $\K$-group structure on $\T_{\L}$ induced by $\tilde{\tau}$. The cocycle $h$ encodes a $(\T_{\L}, \tau_{\L})$-torsor. Indeed, one can easily  shows that 
$$\H^1_{cont}(\Gamma,  G) \cong \H^1_{cont} \left( \Gamma,  \Aut^{\T_{\L}} \left( \T_{\L} \right) \right).$$
\end{remark}

From a generalized AH-datum over $\overline{\Bbbk}$, we can easily construct an affine $\Bbbk$-variety endowed with a $(\T, \tau)$-action (see the proof of the next theorem in Section \ref{SectionProof}).

\begin{theorem}[Torus actions on normal affine varieties over arbitrary fields] \label{Result}  
Let $\Bbbk$ be a field, and let   $\Gamma := \Gal(\Bbbkb/\Bbbk)$. Let $\T = \Spec(\Bbbkb[M])$ be a $\Bbbkb$-torus, and let $\tau$ be a $\Bbbk$-group structure on $\T$. 
\begin{enumerate}[leftmargin=0.75cm, label=(\roman*)]
\item Let $(\omega_N, Y, \D, \sigma_Y, h)$ be a generalized AH-datum over $\overline{\Bbbk}$.  The affine $\T$-variety $X[Y, \D]$ admits a $\Bbbk$-structure $\sigma_{X[Y, \D]}$ such that $(\T, \tau)$ acts on $( X[Y, \D], \sigma_{X[Y, \D]})$.
\item Let $(X, \sigma)$ be a normal affine $\Bbbk$-variety endowed with a $(\T, \tau)$-action. There exists a generalized AH-datum $(\omega_N, Y, \D, \sigma_Y, h)$ such that $(X, \sigma) \cong ( X[Y, \D], \sigma_{X[Y, \D]})$ as $(\T, \tau)$-varieties.
\end{enumerate}
\end{theorem}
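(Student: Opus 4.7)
The plan is to treat the two directions separately, in each case reducing to the classical Altmann--Hausen theorem (Theorem AH) over $\bar{\Bbbk}$ and then importing the Galois-semilinear data.

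For part $(i)$, I would construct $\sigma_{X[Y,\D]}$ directly on the $M$-graded algebra $A[Y,\D]\subset \bar{\Bbbk}(Y)[M]$ by the formula
$$\sigma_{X[Y,\D],\gamma}^{\sharp}(f\,\chi^{m}) \;:=\; h_{\gamma}\bigl(\tilde{\tau}_{\gamma}(m)\bigr)\cdot {\sigma_{Y,\gamma}}^{\sharp}(f)\cdot \chi^{\tilde{\tau}_{\gamma}(m)},$$
for $\gamma\in\Gamma$, $m\in\omega_{N}^{\vee}\cap M$ and $f\in \H^{0}(Y,\mathcal{O}_{Y}(\D(m)))$. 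A short divisor-theoretic computation using \eqref{eqdiv} shows that the right-hand side lies in $\H^{0}(Y,\mathcal{O}_{Y}(\D(\tilde{\tau}_{\gamma}(m))))\,\chi^{\tilde{\tau}_{\gamma}(m)}$, so that the map preserves $A[Y,\D]$. The cocycle condition \eqref{eqdiv2} then translates, after tracking weights, into the semilinear action identity $\sigma^{\sharp}_{\gamma_{1}\gamma_{2}}=\sigma^{\sharp}_{\gamma_{1}}\circ\sigma^{\sharp}_{\gamma_{2}}$. Continuity is inherited from $\sigma_{Y}$ and $h$, and $(\T,\tau)$-equivariance is an immediate check on the coaction map $f\,\chi^{m}\mapsto \chi^{m}\otimes f\,\chi^{m}$. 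Applying Proposition \ref{eqcat} (and its group-equivariant analog) yields the desired $\Bbbk$-variety with its $(\T,\tau)$-action.

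For part $(ii)$, I would base-change $(X,\sigma)$ to $\bar{\Bbbk}$ and apply Theorem AH to the normal affine $\bar{\Bbbk}$-variety $X_{\bar{\Bbbk}}$ with its effective $\T$-action, obtaining an AH-datum $(\omega_{N},Y,\D)$ and a $\T$-equivariant isomorphism $X_{\bar{\Bbbk}}\cong X[Y,\D]$. Since the $\Gamma$-semilinear action on $\bar{\Bbbk}[X_{\bar{\Bbbk}}]$ is $(\T,\tau)$-equivariant, it sends the weight-$m$ isotypical component to the weight-$\tilde{\tau}_{\gamma}(m)$ component, so the weight cone $\omega_{N}^{\vee}$ is automatically $\tilde{\tau}$-stable. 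To transfer the Galois action to the pair $(Y,\D)$, I would invoke the Galois-equivariant toric embedding of Proposition \ref{ToricDown}: it produces a $\Gamma$-equivariant closed immersion of $X_{\bar{\Bbbk}}$ into an ambient toric $\bar{\Bbbk}$-variety $V$ whose combinatorial description already carries a compatible $\Gamma$-action, from which the Chow/GIT-type construction of $Y$ in \cite{Alt} inherits a $\Bbbk$-structure $\sigma_{Y}$. The divisor $\D$ is then uniquely determined by $X_{\bar{\Bbbk}}$ only up to addition of principal divisors, and for each weight $m$ the difference ${\sigma_{Y,\gamma}}^{*}\D(m)-\D(\tilde{\tau}_{\gamma}(m))$ is therefore a principal divisor on $Y$; choosing a generator defines $h_{\gamma}(\tilde{\tau}_{\gamma}(m))\in\bar{\Bbbk}(Y)^{*}$ and gives \eqref{eqdiv}. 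Relation \eqref{eqdiv2} falls out by comparing $\sigma^{\sharp}_{\gamma_{1}}\circ\sigma^{\sharp}_{\gamma_{2}}$ and $\sigma^{\sharp}_{\gamma_{1}\gamma_{2}}$ on a nonzero semi-invariant of weight $\tilde{\tau}_{\gamma_{1}}^{-1}(m)$.

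The main obstacle is the intrinsic non-uniqueness of the Altmann--Hausen datum: the pair $(Y,\D)$ is well defined only up to isomorphisms of $Y$ combined with shifts of $\D$ by principal divisors, so the semilinear $\Gamma$-action on $X_{\bar{\Bbbk}}$ does not literally descend to $(Y,\D)$ but only modulo this ambiguity. The cocycle $h$ is precisely the obstruction to rigidifying this descent, and making the construction rigorous requires a Galois-equivariant refinement of the AH-correspondence. This is exactly what Proposition \ref{ToricDown} provides: enough rigidity in the ambient toric model that each step of the construction of $Y$ and $\D$ can be followed through $\Gamma$-equivariantly, with the residual indeterminacies packaged into a single cocycle valued in $\Hom(\omega_{N}^{\vee}\cap M,\bar{\Bbbk}(Y)^{*})$. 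Once this is in place, checking that the generalized AH-datum obtained in part $(ii)$ reproduces $(X,\sigma)$ via the construction of part $(i)$ reduces to comparing two $M$-graded $\Bbbk$-subalgebras of $\bar{\Bbbk}(Y)[M]$ that agree after base change to $\bar{\Bbbk}$, which follows from the equivalence of categories of Proposition \ref{eqcat}.
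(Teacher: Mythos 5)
Your proposal is correct and follows essentially the same route as the paper: part (i) is the same explicit construction of $\sigma_{X[Y,\D]}^{\sharp}$ on graded pieces, with \eqref{eqdiv} giving well-definedness and \eqref{eqdiv2} the semilinearity, and part (ii) is the same Galois-equivariant toric downgrading via Proposition \ref{ToricDown}, with the cocycle $h$ absorbing the principal-divisor ambiguity in $\D$ (the paper itself only sketches this step, deferring to \cite{PA} and \cite{Huruguen}). No substantive differences.
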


\begin{example} 
Let $(X, \sigma)$ be an affine $(\T, \tau)$-toric variety, where $\T=\Spec (\Bbbkb[M])$. Let $\omega_M$ be the weight cone of the $\T$-action on $X$. In this case, $Y=\Spec(\Bbbkb)$ is endowed with the $\Bbbk$-structure $\gamma \mapsto \Spec(\gamma)$, $\D$ is trivial, and $h : \Gamma \to \Hom(\omega_M \cap M, \Bbbkb^*)$. Therefore, the Altman-Hausen presentation is given by $\omega_M$ (that encodes the toric $\T$-variety $X$ since $\Bbbkb[X] \cong A[Y,\D] = \Bbbkb[\omega_M \cap M]$), and by $h$ (that encodes the $\Bbbk$-structure $\sigma$ on $X$ compatible with $\tau$). Let $\gamma \in \Gamma$, we will see in the proof of Theorem \ref{Result} that   $\sigma_{\gamma}^{\sharp} : \Bbbkb[\omega_M \cap M]   \to \Bbbkb[\omega_M \cap M];  a_m \chi^m \mapsto \gamma(a_m) h_{\gamma}(\tilde{\tau}_{\gamma}(m)) \chi^{\tilde{\tau}_{\gamma}(m)}$.  Moreover, note that $\sigma_{\gamma}(X_{\T}) = X_{\T}$ for all $\gamma \in \Gamma$, where $X_{\T} \cong \T$ is the dense open orbit  of the toric $\Bbbkb$-variety $X$. Hence, $\sigma$ induces a $\Bbbk$-structure $\sigma_{\T}$ on $X_{\T}$ and $(X_{\T}, \sigma_{\T})$ is a $(\T, \tau)$-torsor. 
\end{example}

\subsection{Proof of Theorem \ref{Result}} \label{SectionProof}

The next result, which generalizes \cite[Proposition 4.1]{PA}, is a key ingredient to construct an AH-datum from  a $T$-action on a $\Bbbk$-variety. Here, $\Bbbk'/\Bbbk$ is a non-necessarily finite Galois extension in $\Bbbkb$ of Galois group $\Gamma$. The next proposition will be used with $\Bbbk'=\Bbbkb$ in the proof of Theorem \ref{Result}, but with a finite Galois extension $\Bbbk'/\Bbbk$ in Corollary \ref{ResultSplit}.

\begin{proposition}[Downgrading torus action] \label{ToricDown}
Let $X$ be an affine $\Bbbk'$-variety endowed with an action of the $d$-dimensional torus $\T=\Spec(\Bbbk'[M])$.       Let $\sigma$ be a $\Bbbk$-structure on $X$ and let $\tau$ be a $\Bbbk$-group structure on $\T$. If the $\Bbbk$-torus $(\T, \tau)$ acts on $(X, \sigma)$, then there is  $n \in \N, n \geq d$ such that:
\begin{enumerate}[leftmargin=0.75cm, label=(\roman*)]
\item There is a  $\Bbbk$-group  structure $ \tau' $ on $\Gkp^n $   that extends to a  $\Bbbk$-structure $ \sigma' $ on $\Akp^n $; 
\item $(\T, \tau)$ is a closed subgroup of $(\Gkp^n, \tau')$; and
\item $(X, \sigma)$ is a closed subvariety of   $(\Akp^n, \sigma')$,   and   $(X, \sigma) \hookrightarrow (\Akp^n, \sigma')$ is  $(\T, \tau)$-equivariant. Moreover, $X$ intersects  the dense open orbit of   $\Akp^n$  for the natural $\Gkp^n$-action, and the weight cone of $\Akp^n$ is the weight cone of $X$.
\end{enumerate}
\end{proposition}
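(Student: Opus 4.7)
\emph{Proof plan.} The plan is to realize $(X, \sigma)$ as a $(\T, \tau)$-equivariant closed subvariety of a suitable affine space, adapting the classical embedding method of \cite[\S 11]{Alt} to the Galois setting. The crux is to produce a finite system of $M$-homogeneous generators of $\Bbbk'[X]$ that is permuted by $\Gamma$.

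To begin with, I would pick a finite-dimensional $\T$-stable subspace $V_0 \subset \Bbbk'[X]$ spanned by homogeneous algebra generators; since the $\T$-action is effective, any semi-invariant is a polynomial in such generators, so their weights generate $M$ as a group. By continuity of $\sigma$, each $f \in V_0$ has open (hence finite-index) stabilizer in the profinite group $\Gamma$, so its $\Gamma$-orbit is finite. Thus $V := \sum_{\gamma \in \Gamma} \sigma_\gamma^\sharp(V_0)$ is a finite-dimensional, $\Gamma$-stable, $\T$-stable (i.e. $M$-graded) subspace of $\Bbbk'[X]$ still generating $\Bbbk'[X]$ as a $\Bbbk'$-algebra; here $\T$-stability uses that $\sigma_\gamma^\sharp$ sends weight-$m$ elements to weight-$\tilde{\tau}_\gamma(m)$ elements.

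The central step is then to find the desired permuted basis. I group the weights appearing in $V$ into $\Gamma$-orbits for $\tilde{\tau}$ and set $V_\mathcal{O} := \bigoplus_{m \in \mathcal{O}} V_m$ for each orbit $\mathcal{O}$; this is a finite-dimensional $\T$- and $\Gamma$-stable $\Bbbk'$-subspace. The semilinear $\Gamma$-action on the finite-dimensional $V_\mathcal{O}$ factors through a finite Galois quotient, so Galois descent gives $V_\mathcal{O} = V_\mathcal{O}^{\Gamma} \otimes_\Bbbk \Bbbk'$. Picking a $\Bbbk$-basis $(f^{(j)})_j$ of $V_\mathcal{O}^\Gamma$ and writing each $f^{(j)} = \sum_{m \in \mathcal{O}} f^{(j)}_m$ in its $M$-homogeneous components, the $\Gamma$-invariance of $f^{(j)}$ forces $\sigma_\gamma^\sharp(f^{(j)}_m) = f^{(j)}_{\tilde{\tau}_\gamma(m)}$, so the collection of all such homogeneous components, ranging over all orbits $\mathcal{O}$ and all $j$, is a finite family $(f_1, \dots, f_n)$ of homogeneous elements spanning $V$ that $\Gamma$ \emph{permutes}. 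Writing $m_i$ for the weight of $f_i$ and $\pi : \Gamma \to S_n$ for the resulting permutation representation, one has $\sigma_\gamma^\sharp(f_i) = f_{\pi_\gamma(i)}$ and, consequently, $m_{\pi_\gamma(i)} = \tilde{\tau}_\gamma(m_i)$.

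From here the embedding follows formally. I would endow $\Gkp^n = \Spec(\Bbbk'[\Z^n])$ with the $\Bbbk$-group structure $\tau'$ whose lattice representation is the permutation action of $\pi$ on the standard basis $(e_1, \dots, e_n)$, and extend it to the $\Bbbk$-structure $\sigma'$ on $\Akp^n = \Spec(\Bbbk'[y_1, \dots, y_n])$ by ${\sigma'}_\gamma^\sharp(y_i) = y_{\pi_\gamma(i)}$, proving (i). The $\Gamma$-equivariant map of character lattices $\psi : \Z^n \to M$, $e_i \mapsto m_i$, is surjective because the $m_i$ generate $M$, and dualizes to the closed $\Bbbk$-group immersion $(\T, \tau) \hookrightarrow (\Gkp^n, \tau')$ of (ii). The $\Gamma$- and $\T$-equivariant surjection $\Bbbk'[y_1, \dots, y_n] \twoheadrightarrow \Bbbk'[X]$, $y_i \mapsto f_i$, then yields the closed immersion $(X, \sigma) \hookrightarrow (\Akp^n, \sigma')$ of (iii); since each $f_i$ is a nonzero regular function on $X$, the image meets the open orbit of $\Gkp^n$, and the weight cone of the $\T$-action on $\Akp^n$ is $\mathrm{Cone}(m_1, \dots, m_n)$, which by construction agrees with the weight cone of $X$. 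The main obstacle I anticipate is precisely the central step above: turning the semilinear $\Gamma$-representation on $V_\mathcal{O}$ into an honest permutation representation on a spanning set while preserving the $M$-grading; the device of taking the homogeneous components of Galois-invariant vectors is what makes this passage possible.
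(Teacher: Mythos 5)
Your proposal is correct and follows the same overall strategy as the paper's proof: produce a finite, $\Gamma$-permuted family of $M$-homogeneous algebra generators of $\Bbbk'[X]$, then read off the permutation structures $\tau'$ on $\Gkp^n$ and $\sigma'$ on $\Akp^n$, the surjective character-lattice map (using effectiveness of the $\T$-action), and the $(\T,\tau)$-equivariant closed immersion. The two arguments diverge only in how the permuted homogeneous generating set is obtained. The paper first passes to a finite Galois subextension $\Bbbk_1/\Bbbk$ splitting $(\T,\tau)$ and to the $\Bbbk_1$-form $X_1 = X/\Gal(\Bbbk'/\Bbbk_1)$, takes the homogeneous components of a finite generating set of $\Bbbk_1[X_1]$, closes this set up under the now-finite group $\Gal(\Bbbk_1/\Bbbk)$ (homogeneity being preserved by Lemma \ref{IsomGradedStructure}), and then lifts back to $\Bbbk'$. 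You stay over $\Bbbk'$, use continuity of $\sigma$ to get finiteness of $\Gamma$-orbits, and then, inside each weight-orbit block $V_{\mathcal{O}}$, apply semilinear Galois descent and take the homogeneous components of a $\Bbbk$-basis of $V_{\mathcal{O}}^{\Gamma}$. Both devices work, but your central step is more elaborate than necessary: once you have the finite-dimensional, $\Gamma$-stable, $M$-graded generating subspace $V = \sum_{\gamma} \sigma_{\gamma}^{\sharp}(V_0)$, the union of the (finite) $\Gamma$-orbits of a homogeneous basis of $V_0$ is already a finite, $\Gamma$-permuted, homogeneous spanning set of $V$, so the descent-plus-components manoeuvre can be skipped entirely. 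Two small points to tidy up: discard the zero homogeneous components $f^{(j)}_m$ (the nonzero ones are still permuted, since each $\sigma_{\gamma}^{\sharp}$ is injective) so that the weight-cone claim is not polluted by spurious weights; and the phrase ``factors through a finite Galois quotient'' should rather be ``is continuous, hence is induced from a finite Galois subextension'', since a semilinear action cannot literally factor through a finite quotient.
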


\begin{proof} 
Let    $\Bbbk_1/\Bbbk$ be a finite Galois extension in $\Bbbk'$ that splits   the $\Bbbk$-torus $(\T, \tau)$. We have a tower of Galois extension $\Bbbk \subset \Bbbk_1 \subset \Bbbk'$. 
Let $H := \Gal(\Bbbk'/\Bbbk_1)$. By the Galois correspondence \cite[Theorem 0BML]{Stack},  $H$ is a normal subgroup of $\Gamma$ and $  \Gamma / H \cong \Gal(\Bbbk_1/\Bbbk)$.  
The $\Bbbk$-structure  $\sigma$  on $X$ restricts to a $\Bbbk_1$-structure  $\sigma_H := \sigma |_{H}$ on $X$, and  the $\Bbbk$-group structure  $\tau$  on $\T$ restricts to a $\Bbbk_1$-group structure  $\tau_H := \tau |_{H}$ on $\T$.  
Let  $X_1 := X / H$ and  $\T_1 := \T / H$ be the associated $\Bbbk_1$-varieties (see Proposition \ref{eqcat}).  We obtain an induced $\Bbbk$-structure $\sigma_1$ on $X_1$ and a $\Bbbk$-group structure $\tau_1$ on  $\T_1$ such that  $X_1/ \Gal(\Bbbk_1/\Bbbk) \cong X/\Gamma$ and  $\T_1/ \Gal(\Bbbk_1/\Bbbk) \cong \T/\Gamma$ (see Lemmas \ref{LinkFinite} and \ref{ForAll}). 
Moreover $(\T_1, \tau_1)$ acts on $(X_1, \sigma_1)$. Since $\Bbbk_1$ is a splitting field of $\T / \Gamma$,  $\T_1 \cong \Gko^n$, the $H$-action on $M$ is trivial and $\Bbbk'[M]^H = \Bbbk_1[M]$ (see Appendix \ref{Appendix2Split}). 

 \emph{(i)} The algebra $\Bbbk_1[X_1] = \Bbbk'[X]^H$ is finitely generated, so we can write  $\Bbbk_1[X_1] = \Bbbk_1[\tilde{g}_1,  \dots , \tilde{g}_k]$ with $\tilde{g}_i \in \Bbbk_1[X_1] \backslash \{ 0 \}$. Since  the split torus $\T_1 \cong \Spec(\Bbbk_1[M])$ acts on $X_1$, the $\Bbbk_1$-algebra $\Bbbk_1[X_1]$ is $M$-graded, that is $\Bbbk_1[X_1] = \bigoplus_{m \in M} \Bbbk_1[X_1]_m$. So,  there exists homogeneous elements $\tilde{g}_{i,j}$ such that $\tilde{g}_i = \tilde{g}_{i, 1} + \dots + \tilde{g}_{i, k_i}$. Note that $\Bbbk_1[X_1]$ is generated  as a $\Bbbk_1$-algebra  by $\{ \tilde{g}_{i,j}, {\sigma_1}_{\gamma}^{\sharp}(\tilde{g}_{i,j})  \ | \ \gamma \in \Gal(\Bbbk_1/\Bbbk) \}$.  Moreover, by Lemma \ref{IsomGradedStructure}, an homogeneous element is send to an homogeneous element by ${\sigma_1}_{\gamma}^{\sharp}  $ for all $\gamma \in \Gal(\Bbbk_1/\Bbbk)$. Hence, we can assume  there exists $n \in \N$ and homogeneous elements $g_i$ of degree $m_i \in M$, such that $\Bbbk_1[X_1] = \Bbbk_1[ {g}_1,  \dots ,  {g}_n]$ and such that  the set $\{ g_i \ | \ 1 \leq i \leq n\}$ is stable under the $\Gal(\Bbbk_1/\Bbbk)$-action   ${\sigma_1}^{\sharp}$ on $\Bbbk_1[X_1]$. 
 
We obtain a $\Gamma$-equivariant isomorphism $\Bbbk'[X]^H \otimes_{\Bbbk_1} \Bbbk' \cong \Bbbk'[X]$, where the $\Gamma$-action on the left hand side is given by $\gamma \mapsto {\sigma_1}_{\gamma}^{\sharp} \otimes \gamma$ for all $\gamma \in \Gamma$,  and where the $\Gamma$-action on $\Bbbk'[X]$ is given by ${\sigma}^{\sharp}$ (see Lemma \ref{LinkFinite}). Therefore, we can write  $\Bbbk'[X] = \Bbbk'[ {g}_1,  \dots ,  {g}_n]$,  where the set $\{ g_i \ | \ 1 \leq i \leq n\}$ is stable under the $\Gamma$-action $\sigma^{\sharp}$.

Let $\gamma \in \Gamma$, let ${\tau'}_{\gamma} $ and let $ {\sigma'}_{\gamma} $ be the maps induced by the  antilinear maps     
${\tau'}_{\gamma}^{\sharp} (x_i) = x_j$, 
	  and $ {\sigma'}_{\gamma}^{\sharp}  (x_i) = x_j$, where ${\sigma}_{\gamma}^{\sharp}(g_i) = g_j$.  This induces  a  $\Bbbk$-group structure $\tau'$ on $\Gkp^n = \Spec( \Bbbk'[x_1^{\pm1}, \dots, x_n^{\pm1}])$ and a   $\Bbbk$-structure $\sigma'$ on $\Akp^n = \Spec( \Bbbk'[x_1, \dots, x_n])$.  

 \emph{(ii)} The $\Bbbk'$-algebra  morphism
$ \psi :  \Bbbk'[x_1^{\pm 1},  \dots , x_n^{\pm 1}]    \to  \Bbbk'[M],  \ 
 x_i   \mapsto \chi^{m_i}$ 
is surjective since the $\T$-action    on $X$ is effective.  Since $(\T, \tau)$ acts on $(X, \sigma)$, 
$\psi$ is $\Gamma$-equivariant. So,  the   $\Bbbk$-algebra morphism   $\psi^{\Gamma} : \Bbbk'[x_1^{\pm 1},  \dots , x_n^{\pm 1}]^{\Gamma}    \to  \Bbbk'[M]^{\Gamma}$ is well defined and surjective.  Hence, $(\T, \tau)$ is a closed subgroup of $(\Gkp^n,  {\tau'})$. 
 
 \emph{(iii)} The $\Bbbk'$-algebra morphism   
$\varphi : \Bbbk'[x_1,  \dots , x_n]    \to \Bbbk'[X_1], \
 x_i   \mapsto g_i
$
is surjective and induces a $\Bbbk'$-algebra isomorphism  $\Bbbk'[g_1,  \dots , g_n] \cong  {\Bbbk'[x_1,  \dots , x_n]}/{\mathfrak{a}}$, with $ \mathfrak{a} = \text{Ker}(\varphi)$.   {Moreover}, the morphism $\varphi$ is $\Gamma$-equivariant. So,  the   $\Bbbk$-algebra morphism  $\varphi^{\Gamma} : \Bbbk'[x_1,  \dots , x_n]^{\Gamma}    \to \Bbbk'[X_1]^{\Gamma}$ is well defined and surjective. Hence,   $(X, \sigma)$ is a closed subvariety of   $(\Akp^n,  \sigma')$. 

Note that $\varphi$ is $\T$-equivariant, so the closed immersion    $X \hookrightarrow \Akp^n$ is $\T$-equivariant. 
Moreover, the comorphism of   the  $\T$-action   on $\Akp^n$ is given by:
\begin{equation*}
\tilde{\mu}^{\sharp}  :\Bbbk'[x_1, \dots, x_n]      \to \Bbbk'[M] \otimes  \Bbbk'[x_1, \dots, x_n],   \ \ \ 
 x_i  \mapsto \chi^{m_i} \otimes x_i
\end{equation*}
Then,  the  following diagram  commutes for all $\gamma \in \Gamma$:
\small
\begin{center}
\begin{tikzpicture}
\matrix (m) [matrix of math nodes,row sep=1.25em,column sep=4em,minimum width=2em]
 {
              & \Bbbk'[\Akp^n] \vphantom{\otimes \Bbbk'[\Akp^n]} &                   &  \Bbbk'[M] \otimes \Bbbk'[\Akp^n] \\
 \Bbbk'[\Akp^n] \vphantom{\otimes \Bbbk'[\Akp^n]}  &              & \Bbbk'[M] \otimes \Bbbk'[\Akp^n] &      \\
             &              &                   &        \\
				     &              &                   &        \\
	           &       \Bbbk'[X]  \vphantom{  \Bbbk'[M] \otimes}  &                   & \Bbbk'[M] \otimes \Bbbk'[X] \\
      \Bbbk'[X]  \vphantom{  \Bbbk'[M] \otimes}     &              & \Bbbk'[M] \otimes \Bbbk'[X] &     \\};
 \path[-stealth]
    (m-1-2) edge node  [above] {    {$\tilde{\mu}^{\sharp}$} } (m-1-4)
    (m-2-1) edge node  [above] { \ \ \ \ \ \ \ \ \ \ \ \ \ \ \  {$\tilde{\mu}^{\sharp} $ } } (m-2-3)
	  (m-5-2) edge [dashed] node  [below] {  {$\mu^{\sharp} \ \ \ \ \ \ \ \ \ \ \ \ \ \  $} } (m-5-4)
    (m-6-1) edge node  [below] {  {$\mu^{\sharp} $} } (m-6-3)
		(m-1-4) edge node  [left] { {${\tau}_{\gamma}^{\sharp} \times {\sigma'}_{\gamma}^{\sharp} \ \ $}  \ \ \ \  } (m-2-3)
    (m-5-4) edge node  [right] { \ \ \ \ \ \ {${\tau}_{\gamma}^{\sharp} \times  {\sigma}_{\gamma}^{\sharp} $}} (m-6-3)
    (m-5-2) edge [dashed] node  [above] { {${\sigma}_{\gamma}^{\sharp} \ \ \ \ \ $}}  (m-6-1)
		(m-1-2) edge node  [above] { {${\sigma'}_{\gamma}^{\sharp} \ \ \ \ \ \ \ $}}  (m-2-1)
    (m-2-1) edge node  [left] { {$\varphi$}} (m-6-1)
    (m-1-2) edge [dashed] node  [left] {    {$\varphi$} } (m-5-2)
	  (m-1-4) edge   node  [right] {  $id \times \varphi$ }(m-5-4)
		(m-2-3) edge node  [right] {   ${id \times \varphi}_{\vphantom{N_N} }$ }   (m-6-3);
\end{tikzpicture}
\end{center}
\normalsize
Hence, the morphism $\varphi$ is $(\T, \tau)$-equivariant, so $(X, \sigma)$ is a closed subvariety of $(\Akp^n,  {\sigma'})$, and $(X, \sigma) \hookrightarrow (\Akp^n,  {\sigma'})$ is $(\T, \tau)$-equivariant.  
Finally, note that for all $i \in \{1 , \dots, n \}$, $x_i \notin \mathfrak{a}$, hence $X$ intersects the dense open orbit of $\Gkp^n$. It follows that  the weight cone of $\Akp^n$ is the weight cone of $X$. 
\end{proof}

The next lemma is another key ingredient.

\begin{lemma} \label{Artin} Let $Y$ be a quasi-projective $\Bbbk'$-variety, let $\sigma$ be a $\Bbbk$-strucure on $Y$, and let $\L:= \Bbbk'(Y)$. Then,   $\sigma$ induces a faithful $\Gamma$-action on $\L$ by field automorphisms, and $\L / \L^{\Gamma}$ is a Galois extension of absolute Galois group $\Gal(\L / \L^{\Gamma}) \cong \Gamma$.   Moreover, $\L^{\Gamma} = \Bbbk(Y/\Gamma)$.
\end{lemma}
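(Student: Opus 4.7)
The plan is to first produce the $\Gamma$-action on $\L$ directly from the $\Bbbk$-structure, and then identify $\L$ with a base change of $\Bbbk(Y/\Gamma)$, after which the statement reduces to classical Galois-descent facts for field extensions, with the profinite case handled by a filtered colimit over finite Galois subextensions of $\Bbbk'/\Bbbk$.

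First I would construct the action and check faithfulness. For each $\gamma \in \Gamma$ the semilinear automorphism $\sigma_\gamma\colon Y \to Y$, which sits over $\Spec(\gamma)$, pulls back rational functions to rational functions and thus defines a field automorphism $\sigma_\gamma^{\sharp}\colon \L \to \L$; the cocycle identity $\sigma_{\gamma_1\gamma_2} = \sigma_{\gamma_2}\circ \sigma_{\gamma_1}$ makes $\gamma \mapsto \sigma_\gamma^{\sharp}$ into a $\Gamma$-action by field automorphisms. Since the restriction of $\sigma_\gamma^{\sharp}$ to the subfield $\Bbbk' \subset \L$ equals $\gamma$, and $\Gamma = \Gal(\Bbbk'/\Bbbk)$ acts faithfully on $\Bbbk'$, the action on $\L$ is faithful.

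Next I would set $Y_0 := Y/\Gamma$ and $K_0 := \Bbbk(Y_0)$. By Proposition \ref{eqcat} one has $Y \simeq (Y_0)_{\Bbbk'}$ with $\sigma_\gamma = \mathrm{id}_{Y_0}\times \Spec(\gamma)$, so pullback along the faithfully flat projection $Y \to Y_0$ embeds $K_0$ into $\L$ and its image is $\sigma^{\sharp}$-fixed; thus $K_0 \subseteq \L^\Gamma$. For every finite Galois subextension $\Bbbk_1/\Bbbk$ in $\Bbbk'$, set $H_1 := \Gal(\Bbbk'/\Bbbk_1)$ and let $Y_1 := Y/H_1$ be the $\Bbbk_1$-variety associated with $\sigma|_{H_1}$ (see Lemmas \ref{LinkFinite} and \ref{ForAll}); then $Y_1 \simeq (Y_0)_{\Bbbk_1}$ and $Y \simeq (Y_1)_{\Bbbk'}$. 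Because $Y_0$ is geometrically integral, $K_0/\Bbbk$ is a regular extension, so the finite tensor product $K_0 \otimes_\Bbbk \Bbbk_1$ is an Artinian integral domain, hence a field, and canonically identifies with the compositum $\Bbbk_1(Y_1) \subset \L$. Base-changing the Galois extension $\Bbbk_1/\Bbbk$ to $K_0$ shows that $\Bbbk_1(Y_1)/K_0$ is a finite Galois extension with Galois group $\Gal(\Bbbk_1/\Bbbk) \simeq \Gamma/H_1$, and the induced Galois action coincides with the one restricted from $\sigma^{\sharp}$.

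Finally, writing $\Bbbk' = \bigcup \Bbbk_1$ over finite Galois subextensions and observing that every element of $\L = \Bbbk'(Y)$ lies in some $\Bbbk_1(Y_1)$, I would obtain $\L = \bigcup_{\Bbbk_1} \Bbbk_1(Y_1)$; passing to the filtered colimit of the finite-level conclusions yields that $\L/K_0$ is Galois with Galois group $\varprojlim \Gal(\Bbbk_1/\Bbbk) \simeq \Gamma$, and therefore $\L^\Gamma = K_0 = \Bbbk(Y/\Gamma)$. The main technical point I expect to be delicate is the finite-level identification of $\Bbbk_1(Y_1)$ with $K_0 \otimes_\Bbbk \Bbbk_1$ as fields inside $\L$ and the matching of the scheme-theoretic action $\sigma^{\sharp}$ modulo $H_1$ with the algebraic Galois action coming from base change; once this is in place, the profinite passage and the identification of fixed fields are formal.
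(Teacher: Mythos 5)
Your argument is correct, and it shares the overall skeleton of the paper's proof — reduce to the finite Galois subextensions $\Bbbk_1/\Bbbk$ of $\Bbbk'/\Bbbk$ via Lemmas \ref{LinkFinite} and \ref{ForAll}, then pass to the limit using $\L = \bigcup \Bbbk_1(Y_1)$ — but the key finite-level step is genuinely different. The paper applies Artin's theorem \cite[Theorem 1.8 (Artin)]{Lang} to the faithful $\Gal(\Bbbk_1/\Bbbk)$-action on $\Bbbk_1(Y_1)$: it defines $\K := \Bbbk_1(Y_1)^{\Gal(\Bbbk_1/\Bbbk)}$, checks that this field does not depend on the chosen $\Bbbk_1$, and concludes that each $\Bbbk_1(Y_1)/\K$ is Galois with group $\Gal(\Bbbk_1/\Bbbk)$; the identification $\K = \Bbbk(Y/\Gamma)$ is then essentially left implicit. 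You instead start from $K_0 := \Bbbk(Y/\Gamma)$, use geometric integrality of $Y/\Gamma$ (automatic under the paper's convention on varieties) to see that $K_0/\Bbbk$ is regular, hence that $K_0 \otimes_\Bbbk \Bbbk_1$ is a field canonically equal to $\Bbbk_1(Y_1)$ inside $\L$, and obtain the finite-level Galois statement by base change of the Galois extension $\Bbbk_1/\Bbbk$ along the linearly disjoint extension $K_0/\Bbbk$. Your route buys a direct proof of the clause $\L^{\Gamma} = \Bbbk(Y/\Gamma)$ and dispenses with the consistency check for $\K$ across different $\Bbbk_1$, at the cost of invoking regularity; the paper's Artin argument stays entirely on the field-theory side and needs only faithfulness of the action. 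The one point you rightly flag as delicate — that the descent-theoretic action $\sigma^{\sharp}$ agrees modulo $H_1$ with $\mathrm{id} \otimes \gamma$ on $K_0 \otimes_{\Bbbk} \Bbbk_1$ — is exactly what Proposition \ref{eqcat} and Lemma \ref{LinkFinite} supply, so there is no gap there.
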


\begin{proof}   (Compare with \cite[Lemma 1.3.2]{Fried})
Since $Y$ is an integral scheme, for any $\Gamma$-stable affine open subset $U \subset Y$, the ring  $\mathcal{O}_Y(U)$ is   integral   and $\L := \Bbbk'(Y) =\Frac(\mathcal{O}_Y(U))$. Hence, $\sigma$ induces a  group homomorphism $\varphi : \Gamma  \to \Aut_{\Bbbk}(\L), \ \gamma  \mapsto \varphi_{\gamma} := {\sigma}_{\gamma}^{\sharp}$. 
 
Let $\Bbbk_1 / \Bbbk$   and let $\Bbbk_2/\Bbbk$ be   finite Galois extensions in $\Bbbk'$. Since there exists a finite Galois extension $\Bbbk_3/\Bbbk$  that contains $\Bbbk_1$ and $\Bbbk_2$   \cite[Lemmas 0EXM and 09DT]{Stack}, we can assume that $\Bbbk_2/\Bbbk$ is a finite Galois extension such that $\Bbbk \subset \Bbbk_1 \subset \Bbbk_2 \subset \Bbbk'$.  By the Galois correspondence \cite[Theorem  0BML]{Stack}, $H_2 :=\Gal(\Bbbk'/\Bbbk_2) $ is a normal subgroup of $H_1 := \Gal(\Bbbk'/\Bbbk_1)$.
 Consider the $\Bbbk_1$-variety   $Y_1:= Y/H_1$ and  the $\Bbbk_2$-variety  $Y_2:= Y/H_2$.  We have  $Y/\Gamma \cong Y_1 / \Gal(\Bbbk_1/\Bbbk) \cong Y_2 / \Gal(\Bbbk_2 / \Bbbk)$  (see Lemmas \ref{LinkFinite} and \ref{ForAll}).  
Moreover, since $\Gal(\Bbbk_2/\Bbbk_1)$ is a finite group, then $\Bbbk_1(Y_1)^{\Gal(\Bbbk_1/\Bbbk)} = \Bbbk_2(Y_2)^{\Gal(\Bbbk_2/\Bbbk)}$. Denote $\K:= \Bbbk_1(Y_1)^{\Gal(\Bbbk_1/\Bbbk)}$, this field does not depend on the finite Galois extension $\Bbbk_1/\Bbbk$. 
By \cite[Theorem 1.8 (Artin)]{Lang}, the field  extension  $\Bbbk_1(Y_1) / \K$ is a finite Galois extension of Galois group $\Gal(\Bbbk_1(Y_1)/\K) \cong \Gal(\Bbbk_1/\Bbbk)$. Furthermore, the field $\L:= \Bbbk'(Y)$ is the union of all above $\Bbbk_1(Y_1)$. Then (see \cite[Lemma 0BU2]{Stack}),  $\Gal(\Bbbk'/\Bbbk) = \lim  \Gal( \Bbbk_i / \Bbbk) \cong  \lim  \Gal(\Bbbk_i(Y_i) / \K) = \Gal(\L/\K)$, where the limits are over all finite Galois extension $\Bbbk_i/\Bbbk$ in $\Bbbk'$.  
\end{proof}

\smallskip

\begin{proof}[Proof of Theorem \ref{Result}] 
\emph{(i)} Let $(\omega_N, Y, \D, \sigma_Y, h)$ be a generalized AH-datum over $\overline{\Bbbk}$. By   \cite[Theorem 3.1]{Alt},  ${X[Y, \D]}:=\Spec(A[Y, \D])$ is a normal affine  $\Bbbkb$-variety   endowed with a $\T$-action, of   weight cone   $\omega_{N}^{\vee}$. This action is obtained from the following comorphism:
\begin{equation*}
\mu^{\sharp} : A[Y, \D]  \to \Bbbkb[M ] \otimes  A[Y, \D], \  
f \mathfrak{X}_m   \mapsto   \chi^{m} \otimes f \mathfrak{X}_m.
\end{equation*} 
We  now construct a $\Bbbk$-structure on ${X[Y, \D]}$ such that $(\T, \tau)$ acts on $({X[Y, \D]}, \sigma_{X[Y, \D]})$. Let $\gamma \in \Gamma$. By Condition (\ref{eqdiv}), we obtain isomorphisms of $A[Y, \D]_0$-modules:
\begin{align*}
{\alpha_{\gamma}}_m : \H^0( Y, \mathcal{O}_Y(\D({m}))) \mathfrak{X}_{ m}   & \to  \H^0( Y, \mathcal{O}_Y(\D(\tilde{\tau}_{\gamma}({m})))) \mathfrak{X}_{\tilde{\tau}_{\gamma}({m})} \\ 
f \mathfrak{X}_m   & \mapsto {\sigma_Y}_{\gamma}^{\sharp}(f) h_{\gamma}(\tilde{\tau}_{\gamma}({m}))\mathfrak{X}_{\tilde{\tau}_{\gamma}({m})}.
\end{align*}
These isomorphisms collect into an isomorphism  of $A[Y, \D]_0$-modules $\oplus_{m \in \omega_{N}^{\vee} \cap M}  \alpha_m$ on  $A[Y, \D]$. By Condition (\ref{eqdiv2}), the  latter isomorphism corresponds to a  $\Bbbk$-structure $\sigma_{X[Y, \D]}$ on ${X[Y, \D]}$. Finally, $(\T, \tau)$ acts on $({X[Y, \D]}, \sigma_{X[Y, \D]})$ since the following diagram commutes for all $\gamma \in \Gamma$:
\begin{center}
\begin{tikzpicture}
  \matrix (m) [matrix of math nodes,row sep=1.5em,column sep=7em,minimum width=2em]
  {
	  A[Y, \D]  &  \Bbbk[M] \otimes  A[Y, \D]        \\
	  A[Y, \D]  &  \Bbbk[M] \otimes  A[Y, \D]        \\};
 \path[-stealth]
   	(m-1-1) edge   node [above] {\small  $\mu^{\sharp}$}  (m-1-2)
		(m-2-1) edge   node [below] {\small  $\mu^{\sharp}$}  (m-2-2)
		(m-1-1) edge node [left] {\small  $  {{\sigma^{\sharp}}_{X[Y, \D]}}_{\gamma}$ }   (m-2-1)
		(m-1-2) edge node [right] {\small  $  {\tau }_{\gamma}^{\sharp} \otimes {\sigma^{\sharp}_{X[Y, \D]}}_{\gamma} $ }   (m-2-2);
\end{tikzpicture}
\end{center}

\emph{(ii)} The structure of the proof is the same as in \cite{PA}, but we give a sketch since the setting and the notation differ.
To determine a generalized AH-datum, we can proceed by a Galois-equivariant version of \emph{toric downgrading}  (introduced in \cite[\S 11]{Alt}) as in \cite{PA}. We embed $\T \times \Gamma$-equivariantly $X$ into some $\Akp^n$ such that $\T$ is a subtorus of $\Gkb^n$ (see Proposition \ref{ToricDown}). Denote by $\T_Y := \Gkb^n / \T$ the quotient torus. Let $M$ (resp. $M'$, $M_Y$) be the character lattice of $\T$ (resp. $\Gkb^n$, $\T_Y$). We obtain the exact sequences of lattices of Appendix \ref{Appendix2Morphism}.  Then, it is basically the same proof as in \cite{PA},  adding $\gamma$ in index everywhere and using \cite[Proposition 1.19]{Huruguen}, which can be extended to infinite Galois extension.
\end{proof}

\subsection{Galois cohomology, torsors and Altmann-Hausen presentation}

In the non-necessarily split version of the Altmann-Hausen construction (Theorem \ref{Result}),  a cocycle $h$ appears in the combinatorial presentation.  We will see that Theorem \ref{Result} simplifies (i.e we can take $h=1$) if this cocycle is equivalent to the trivial one.
 
Let $\Bbbk$ be a  field, let $\T=\Spec(\Bbbkb[M])$ be a  $\Bbbkb$-torus, and let $\tau$ be a $\Bbbk$-group structure on $\T$. Let $(X, \sigma)$ be a normal affine variety endowed with an action of $(\T, \tau)$, and let $(Y, \sigma_Y)$ be the $\Bbbk$-variety of Theorem \ref{Result}. Let $G:= \Hom_{gr}( M, \Bbbkb(Y)^*)$ be endowed with the continuous $\Gamma$-action  $\gamma \cdot f := {\sigma_Y}_{\gamma}^{\sharp} \circ f \circ \tilde{\tau}_{\gamma}^{-1}$ (see Remark \ref{Ltorus}). In this setting, we get the next result.  

\begin{lemma} \label{simpl}   
If $\H^1_{cont}( \Gamma, G) = \{ 1 \}$, then there exists  an $\omega_{N}$-pp divisor  $\D$  on $Y$  such that    
$$\forall  m \in \omega_{M} \cap M, \forall \gamma \in \Gamma, \ {\sigma_Y}_{\gamma}^*(\D(m)) = \D(\tilde{\tau}_{\gamma}(m)),$$
 and such that the varieties $(X, \sigma)$ and $(X[Y, \D], \sigma_{X[Y, \D]})$ are $(\T, \tau)$-equivariantly isomorphic.
\end{lemma}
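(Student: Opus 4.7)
The plan is to exploit the standard fact that a trivial first cohomology set means every $1$-cocycle is a $1$-coboundary: since $h$ is in particular a cocycle in $\H^1_{cont}(\Gamma, G)$ (as recorded in Remark \ref{Ltorus}), the hypothesis yields an element $u \in G = \Hom_{gr}(M, \Bbbkb(Y)^*)$ such that
\[
h_\gamma(m) \;=\; u(m) \cdot {\sigma_Y}_\gamma^{\sharp}\!\big(u(\tilde{\tau}_\gamma^{-1}(m))\big)^{-1} \qquad \forall \gamma \in \Gamma,\ \forall m \in \omega_N^\vee \cap M,
\]
(after possibly replacing $u$ by $u^{-1}$ to fit the sign convention of condition (\ref{eqdiv2})). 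The strategy is then to twist the original polyhedral divisor $\D$ by the principal data carried by $u$ in order to kill $h$ on the nose.

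Concretely, I would define a new evaluation $\D'(m) := \D(m) - \Div_Y(u(m))$ for $m \in \omega_N^\vee \cap M$, and verify first that $\D'$ still comes from a proper $\omega_N^\vee$-polyhedral divisor. Writing $\D = \sum \Delta_i \otimes D_i$, the fact that $u$ is a group homomorphism implies that for each prime divisor $D$ on $Y$, the map $m \mapsto \mathrm{ord}_D(u(m))$ is linear, hence given by pairing with some $v_D \in N_\Q$. Thus $\D' = \sum (\Delta_i - v_{D_i}) \otimes D_i$ is a polyhedral divisor with the same tail cone $\omega_N^\vee$, and properness is preserved because $\mathcal{O}_Y(\D'(m)) \cong \mathcal{O}_Y(\D(m))$ as invertible sheaves (they differ by a principal divisor), so semi-ampleness/bigness are inherited.

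Next, a direct calculation using condition (\ref{eqdiv}) and the coboundary identity above shows that $\D'$ satisfies the simplified equivariance
\[
{\sigma_Y}_\gamma^{*}(\D'(m)) \;=\; \D'(\tilde{\tau}_\gamma(m)) \qquad \forall m \in \omega_N^\vee \cap M,\ \forall \gamma \in \Gamma,
\]
i.e.\ $h'=1$ works for the datum $(\omega_N, Y, \D', \sigma_Y, 1)$. By part (i) of Theorem \ref{Result}, this produces a normal affine $(\T,\tau)$-variety $(X[Y, \D'], \sigma_{X[Y, \D']})$.

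Finally, the $(\T, \tau)$-equivariant isomorphism between $(X, \sigma) \cong (X[Y, \D], \sigma_{X[Y, \D]})$ and $(X[Y, \D'], \sigma_{X[Y, \D']})$ is given graded piece by graded piece by multiplication by $u(m)^{-1}$, namely
\[
\Phi \colon A[Y, \D] \longrightarrow A[Y, \D'], \qquad f\,\mathfrak{X}_m \longmapsto u(m)^{-1} f\,\mathfrak{X}_m.
\]
This is well-defined because $\mathcal{O}_Y(\D(m)) \xrightarrow{\cdot u(m)^{-1}} \mathcal{O}_Y(\D'(m))$ is an isomorphism of sheaves, it is an $M$-graded $\Bbbkb$-algebra isomorphism (using that $u$ is a group homomorphism), hence it is $\T$-equivariant, and a straightforward check using again the coboundary identity shows that it intertwines ${\sigma_{X[Y, \D]}}_\gamma^{\sharp}$ with ${\sigma_{X[Y, \D']}}_\gamma^{\sharp}$ for each $\gamma \in \Gamma$, giving $(\T,\tau)$-equivariance. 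The only place requiring genuine care is this last intertwining computation; everything else reduces to bookkeeping with the coboundary relation, so the main obstacle is to keep the conventions of the $\Gamma$-action on $G$ and the signs in (\ref{eqdiv}) consistent throughout.
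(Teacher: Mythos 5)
Your proposal is correct and follows essentially the same route as the paper: write $h$ as a coboundary of some $u\in G$, twist $\D$ by the principal divisors $\Div_Y(u(m))$ to obtain a new pp-divisor $\D'$ satisfying the simplified equivariance, and identify $A[Y,\D]$ with $A[Y,\D']$ gradedly via multiplication by $u(m)^{\pm 1}$. Your extra verification that $\D'$ is again a proper $\omega_N^\vee$-polyhedral divisor (via the linear functionals $m\mapsto \mathrm{ord}_D(u(m))$ translating the coefficients $\Delta_i$) is a detail the paper leaves implicit, and your caveat about fixing the sign convention for the coboundary is exactly the right point of care.
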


\begin{proof} 
By Theorem \ref{Result}, there exists  a generalized AH-datum $(\omega_N, Y, \D, \sigma_Y, h)$ such that  the varieties $(X, \sigma_X)$ and $(X[Y, \D'], \sigma_{X[Y, \D']} )$ are $(\T, \tau)$-equivariantly isomorphic.
 Since $\H^1_{cont}(\Gamma, G ) = \{ 1 \}$,  the trivial cocycle  is equivalent to $h$, so there exists $g \in G$ such that $h_{\gamma}(m) = g^{-1}(m) {\sigma_Y}_{\gamma}^{\sharp}(g(\tilde{\tau}_{\gamma}(m)))$. Let   $m \in \omega_{M} \cap M$ and $\gamma \in \Gamma$, then:
$${\sigma_Y}_{\gamma}^*(\D(m)) = \D( \tilde{\tau}_{\gamma}(m) ) + \Div_Y(h_{\gamma}(\tilde{\tau}_{\gamma}(m)) ) = \D( \tilde{\tau}_{\gamma}(m) ) + {\sigma_Y}_{\gamma}^*\Div_Y(g(m))  -  \Div_Y(g(\tilde{\tau}_{\gamma}(m))) $$
$$\iff {\sigma_Y}_{\gamma}^* \left(\D(m) -  \Div_Y(g(m)) \right)= \D( \tilde{\tau}_{\gamma}(m) )   -  \Div_Y( g(\tilde{\tau}_{\gamma}(m))).$$
So, if $\D'$ is the  pp-divisor defined by $ \D'(m):= \D(m)-\text{div}_{Y}(g(m))$, then ${\sigma_Y}_{\gamma}^*(\D'(m))  = \D'( \tilde{\tau}_{\gamma}(m) )$, and the $M$-graded  algebras $A[Y, \D]$ and $A[Y, \D']$ are  isomorphic with respect to $\sigma_{X[Y,\D]}^{\sharp}$ and $\sigma_{X[Y,\D']}^{\sharp}$. Hence the varieties $(X, \sigma_X)$ and $(X[Y, \D'], \sigma_{X[Y, \D']})$ are $(\T, \tau)$-equivariantly isomorphic.
\end{proof}

Quasi-trivial $\Bbbk$-tori have no non-trivial torsors (see \cite[\S 2, \S 5]{ColliotTh}). Therefore, the Altmann-Hausen presentation simplifies for quasi-trivial $\Bbbk$-torus actions on normal affine varieties.

\begin{proposition}[{Quasi-trivial torus actions on normal affine varieties over arbitrary fields}]   \label{PropQuasiTrivialAH} 
Let $\Bbbk$ be a field, and denote   $\Gamma := \Gal(\Bbbkb/\Bbbk)$. Let $(\T = \Spec(\Bbbkb[M]),\tau)$ be a quasi-trivial $\Bbbk$-torus. 
\begin{enumerate}[leftmargin=0.75cm, label=(\roman*)]
\item Let $(\omega_N, Y, \D, \sigma_Y, h=1)$ be a generalized AH-datum over $\overline{\Bbbk}$.  Then the affine $\T$-variety $X[Y, \D]$ admits a $\Bbbk$-structure $\sigma_{X[Y, \D]}$ such that $(\T, \tau)$ acts on $( X[Y, \D], \sigma_{X[Y, \D]})$.
\item Let $(X, \sigma)$ be a normal affine $\Bbbk$-variety endowed with a $(\T, \tau)$-action. Then there exists a generalized AH-datum $(\omega_N, Y, \D, \sigma_Y, h=1)$ such that there is an isomorphism of $(\T, \tau)$-varieties $(X, \sigma) \cong ( X[Y, \D], \sigma_{X[Y, \D]})$.
\end{enumerate}
\end{proposition}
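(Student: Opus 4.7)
The plan is to deduce the proposition directly from Theorem \ref{Result}, Remark \ref{Ltorus}, and Lemma \ref{simpl}, the only new input being that quasi-triviality of $(\T,\tau)$ passes to the auxiliary $\K$-torus governing the cocycle.

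For part (i), there is essentially nothing to do. The constant map $h=1$ trivially satisfies the cocycle condition \eqref{eqdiv2}, and condition \eqref{eqdiv} with $h=1$ is the natural $\Gamma$-equivariance condition on $\D$, so $(\omega_N,Y,\D,\sigma_Y,1)$ is a generalized AH-datum in the sense of the preceding definition. Hence Theorem \ref{Result}(i) produces the required $\Bbbk$-structure $\sigma_{X[Y,\D]}$ and $(\T,\tau)$-action on $X[Y,\D]$.

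For part (ii), apply Theorem \ref{Result}(ii) to $(X,\sigma)$ to obtain some generalized AH-datum $(\omega_N,Y,\D,\sigma_Y,h)$ with $(X,\sigma)\cong (X[Y,\D],\sigma_{X[Y,\D]})$. The whole point is to show that the cocycle $h$ is cohomologically trivial. Set $\L:=\Bbbkb(Y)$, $\K:=\L^\Gamma$, and let $(\T_\L,\tau_\L)$ be the $\K$-torus of Remark \ref{Ltorus}. By that remark, $h$ represents a class in
\[
\H^1_{cont}(\Gamma,G)\;\cong\;\H^1_{cont}\bigl(\Gamma,\Aut^{\T_\L}(\T_\L)\bigr),
\]
which classifies isomorphism classes of $(\T_\L,\tau_\L)$-torsors. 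The key observation is that the $\Gamma$-representation on the character lattice $M$ attached to $\tau_\L$ is, by construction, the same representation $\tilde\tau$ as the one attached to $\tau$. Since $(\T,\tau)$ is quasi-trivial, $\tilde\tau$ permutes some $\Z$-basis of $M$; hence the corresponding representation of $\Gal(\L/\K)\cong\Gamma$ also permutes that basis, and $(\T_\L,\tau_\L)$ is a quasi-trivial $\K$-torus. Quasi-trivial tori have no non-trivial torsors (see \cite[\S 2, \S 5]{ColliotTh}, as invoked just before the statement), so $\H^1_{cont}(\Gamma,G)=\{1\}$ and $h$ is equivalent to the trivial cocycle.

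Now Lemma \ref{simpl} applies verbatim: it produces a new $\omega_N$-pp divisor $\D'$ with ${\sigma_Y}_\gamma^\ast(\D'(m))=\D'(\tilde\tau_\gamma(m))$ for all $m\in\omega_M\cap M$ and $\gamma\in\Gamma$, and a $(\T,\tau)$-equivariant isomorphism $(X,\sigma)\cong(X[Y,\D'],\sigma_{X[Y,\D']})$. The tuple $(\omega_N,Y,\D',\sigma_Y,1)$ is the desired generalized AH-datum with $h=1$. The only real subtlety in the argument is the identification of the Galois representation governing $(\T_\L,\tau_\L)$ with that governing $(\T,\tau)$, which ensures quasi-triviality is inherited; everything else is a direct invocation of results already established in the paper.
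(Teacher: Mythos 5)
Your proposal is correct and takes essentially the same approach as the paper: the paper's own (very terse) proof consists precisely of observing that quasi-triviality of $(\T,\tau)$ is inherited by the $\K$-torus $(\T_{\L},\tau_{\L})$ of Remark \ref{Ltorus} (because both are governed by the same $\Gamma$-representation $\tilde\tau$ on $M$), so that $\H^1_{cont}(\Gamma,G)=\{1\}$, and then invoking Lemma \ref{simpl}. Your write-up simply makes explicit the details the paper leaves implicit, including the reduction of part (i) to Theorem \ref{Result}(i).
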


\begin{proof}
If $(\T, \tau)$ is a quasi-trivial $\Bbbk$-torus acting on a normal affine variety, then the $\K$-torus $(\T_{\L}, \tau_{\L})$ is still quasi-trivial (see Remark \ref{Ltorus}). Therefore, $\H^1_{cont}( \Gamma, G) = \{ 1 \}$.
\end{proof}

\begin{remark}
In the $\C/\R$ case, this simplification is always possible when the acting torus is quasi-trivial, that is it has no $\s^1$-factors (\cite[Lemma 4.12]{PA}). Recall that any real torus is isomorphic to a torus of the form $\GR^{n_0} \times (\s^1)^{n_1} \times \RC^{n_2}(\GC)$, with $n_0, n_1, n_2 \in \N$ \cite[Proposition 1.5]{Moser}. 
\end{remark}

\subsection{Split $\Bbbk$-torus actions} \label{SectionSplit}

A consequence of Hilbert's 90 Theorem (see \cite[Proposition III.8.24]{Berhuy}) on Theorem \ref{Result}, and on Proposition \ref{PropQuasiTrivialAH}, is that the   Altmann-Hausen presentation  of  \cite{Alt} extends \emph{mutatis mutandis} to split torus action on normal affine varieties over an  arbitrary field. 

\begin{proposition}[{Split torus actions on normal affine varieties over arbitrary fields}] \label{splittorus}    Let $\Bbbk$ be a  field, and let $\T=\Spec(\Bbbk[M])$ be a split $\Bbbk$-torus.
\begin{enumerate}[leftmargin=0.75cm, label=(\roman*)]
\item Let $(\omega_N, Y, \D)$ be an AH-datum over $\Bbbk$. The affine scheme $X[Y,\D] := \Spec(A[Y, \D])$ is a normal $\Bbbk$-variety endowed with a $\T$-action.
\item Let $X$ be an affine normal $\Bbbk$-variety endowed with a $\T$-action. There exists an AH-datum $(\omega_N, Y, \D)$  over $\Bbbk$ such that there is an isomorphism of $\T$-varieties $X \cong X[Y, \D]$.
\end{enumerate}
\end{proposition}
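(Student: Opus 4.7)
The plan is to deduce this proposition directly from Proposition \ref{PropQuasiTrivialAH} (the quasi-trivial case), together with the equivalence of categories of Proposition \ref{eqcat}. The key observation is twofold: firstly, for a split torus $\T = \Spec(\Bbbk[M])$, the associated $\Bbbk$-group structure $\tau_0$ on $\T_{\Bbbkb} = \Spec(\Bbbkb[M])$ induces the trivial $\Gamma$-action on the character lattice $M$ (Example \ref{ExSplit}); secondly, any split torus is quasi-trivial, so by Hilbert's 90 Theorem it has no non-trivial torsors, which means the cocycle $h$ in the generalized AH-datum can be taken to be trivial.

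For part (i), given an AH-datum $(\omega_N, Y, \D)$ over $\Bbbk$, I base-change to $\Bbbkb$ to obtain an AH-datum $(\omega_N, Y_{\Bbbkb}, \D_{\Bbbkb})$ over $\Bbbkb$. By Proposition \ref{eqcat}, $Y_{\Bbbkb}$ is equipped with the canonical $\Bbbk$-structure $\sigma_Y$ given by $\gamma \mapsto \mathrm{id} \times \Spec(\gamma)$. Since $\D$ is defined over $\Bbbk$, its base change satisfies ${\sigma_Y}_{\gamma}^{*}(\D_{\Bbbkb}(m)) = \D_{\Bbbkb}(m) = \D_{\Bbbkb}(\tilde{\tau}_{0,\gamma}(m))$ for every $\gamma \in \Gamma$ and every $m \in \omega_N^{\vee} \cap M$. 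Thus $(\omega_N, Y_{\Bbbkb}, \D_{\Bbbkb}, \sigma_Y, h=1)$ is a generalized AH-datum, and Proposition \ref{PropQuasiTrivialAH}(i) produces a $\Bbbk$-structure on $X[Y_{\Bbbkb}, \D_{\Bbbkb}]$ compatible with the $\T$-action. The corresponding $\Bbbk$-variety is precisely $\Spec(A[Y, \D])$, because $A[Y_{\Bbbkb}, \D_{\Bbbkb}] = A[Y, \D] \otimes_{\Bbbk} \Bbbkb$ and the Galois action arising from $\sigma_Y$ and the trivial cocycle acts only on the scalars, identifying the invariant subalgebra with $A[Y, \D]$.

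For part (ii), I take a normal affine $\Bbbk$-variety $X$ with $\T$-action and pass to its $\Bbbkb$-form $X_{\Bbbkb}$ with canonical $\Bbbk$-structure $\sigma_X$. Since $(\T, \tau_0)$ is quasi-trivial and acts on $(X_{\Bbbkb}, \sigma_X)$, Proposition \ref{PropQuasiTrivialAH}(ii) yields a generalized AH-datum $(\omega_N, Y_{\Bbbkb}, \D_{\Bbbkb}, \sigma_Y, h=1)$ with $(X_{\Bbbkb}, \sigma_X) \cong (X[Y_{\Bbbkb}, \D_{\Bbbkb}], \sigma_{X[Y_{\Bbbkb}, \D_{\Bbbkb}]})$ as $(\T, \tau_0)$-varieties. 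Because $\tilde{\tau}_0 = \mathrm{id}$ and $h=1$, condition (\ref{eqdiv}) reduces to ${\sigma_Y}_{\gamma}^{*}(\D_{\Bbbkb}(m)) = \D_{\Bbbkb}(m)$ for all $\gamma, m$, which expresses that the polyhedral divisor $\D_{\Bbbkb}$ is Galois-invariant. By Proposition \ref{eqcat}, $(Y_{\Bbbkb}, \sigma_Y)$ descends to a quasi-projective $\Bbbk$-variety $Y$, and semi-projectivity is preserved under Galois descent; by faithfully flat descent applied to Weil divisors, the Galois-invariant polyhedral divisor $\D_{\Bbbkb}$ descends to a polyhedral divisor $\D$ on $Y$ over $\Bbbk$, with $\D$ automatically proper since $\D_{\Bbbkb}$ is.

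The main technical point in this plan is to ensure that the descent of $\D_{\Bbbkb}$ is rigorous: the prime divisors of $Y_{\Bbbkb}$ appearing in the support of $\D_{\Bbbkb}$ are permuted by $\Gamma$, and the same polyhedral coefficient must appear on all divisors in a single Galois orbit for Galois-invariance to hold. One then groups such orbits and defines $\D$ as $\sum \Delta_i \otimes D_i$ where $D_i$ is the prime divisor on $Y$ whose pullback to $Y_{\Bbbkb}$ is the corresponding Galois orbit. The compatibility of evaluation $\D(m)$ and its base change $\D_{\Bbbkb}(m)$ then follows from the definition of the evaluation map and the fact that $\mathcal{O}_Y(\D(m)) \otimes_\Bbbk \Bbbkb \cong \mathcal{O}_{Y_{\Bbbkb}}(\D_{\Bbbkb}(m))$, giving $A[Y, \D] \otimes_\Bbbk \Bbbkb \cong A[Y_{\Bbbkb}, \D_{\Bbbkb}]$ and hence $X \cong X[Y, \D]$ as $\T$-varieties.
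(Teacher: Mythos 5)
Your proposal is correct and follows essentially the same route as the paper: the paper's own proof takes the canonical $\Bbbk$-group structure $\tau_0$ of Example \ref{ExSplit}, invokes the quasi-trivial case (Proposition \ref{PropQuasiTrivialAH}), and asserts that a generalized AH-datum with $h=1$ over $\Bbbkb$ corresponds to an AH-datum over $\Bbbk$. You have simply spelled out that correspondence (Galois-invariance of $\D_{\Bbbkb}$, descent of $Y$ and of the polyhedral coefficients along Galois orbits of prime divisors, and the identification $A[Y,\D]\otimes_{\Bbbk}\Bbbkb \cong A[Y_{\Bbbkb},\D_{\Bbbkb}]$), which is exactly the content the paper leaves to the reader.
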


\begin{proof}   Let $\tau:= \tau_0$ be the natural $\Bbbk$-structure  on $\T_{\Bbbkb} = \Spec(\Bbbkb[M])$ defined in Example \ref{ExSplit}. One can shows that in our setting, a generalized AH-datum $(\omega_N, Y', \D', \sigma_Y, h=1)$ over $\Bbbkb$ corresponds to an AH-datum $(\omega_N, Y, \D)$  over $\Bbbk$.
\end{proof}

Proposition \ref{splittorus} has a useful consequence on  Theorem \ref{Result}: we can replace the infinite Galois extension $\Bbbkb/\Bbbk$ by any finite Galois extension that splits the acting torus.

\begin{corollary}[Torus actions on normal affine varieties over arbitrary fields] \label{ResultSplit} 
Let $\Bbbk'/\Bbbk$ be a finite Galois extension of Galois group $\Gamma$. Let $(\T = \Spec(\Bbbk'[M]), \tau)$ be a $\Bbbk$-torus. 
\begin{enumerate}[leftmargin=0.75cm, label=(\roman*)]
\item Let $(\omega_N, Y, \D, \sigma_Y, h)$ be a generalized AH-datum over $\Bbbk'$.  Then the affine $\T$-variety $X[Y, \D]$ admits a $\Bbbk$-structure $\sigma_{X[Y, \D]}$ such that $(\T, \tau)$ acts on $(X[Y, \D], \sigma_{X[Y, \D]})$.
\item Let $(X, \sigma)$ be a normal affine $\Bbbk$-variety endowed with a $(\T, \tau)$-action. Then there exists a generalized AH-datum $(\omega_N, Y, \D, \sigma_Y, h)$ such that there is an isomorphism of $(\T, \tau)$-varieties $(X, \sigma) \cong ( X[Y, \D], \sigma_{X[Y, \D]})$.
\end{enumerate}
\end{corollary}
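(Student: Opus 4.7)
The plan is to reduce everything to the finite extension $\Bbbk'/\Bbbk$ by running the arguments of Theorem \ref{Result} with $\Bbbk'$ playing the role of $\Bbbkb$, and by invoking Proposition \ref{splittorus} to guarantee that the Altmann--Hausen theory of the split $\Bbbk'$-torus $\T = \Spec(\Bbbk'[M])$ already produces an AH-datum defined over $\Bbbk'$ itself.

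For part (i), I would replay the construction of Theorem \ref{Result}(i) verbatim with $\Bbbk'$ in place of $\Bbbkb$. Proposition \ref{splittorus} applied over $\Bbbk'$ turns $(\omega_N, Y, \D)$ into a normal affine $\Bbbk'$-variety $X[Y,\D] = \Spec(A[Y,\D])$ equipped with a $\T$-action of weight cone $\omega_N^{\vee}$. For each $\gamma \in \Gamma$, condition (\ref{eqdiv}) produces an $A[Y,\D]_0$-module isomorphism $f \mathfrak{X}_m \mapsto {\sigma_Y}_{\gamma}^{\sharp}(f)\, h_\gamma(\tilde{\tau}_\gamma(m))\, \mathfrak{X}_{\tilde{\tau}_\gamma(m)}$ between the graded pieces indexed by $m$ and $\tilde{\tau}_\gamma(m)$; these assemble into an automorphism of $A[Y,\D]$, and the cocycle relation (\ref{eqdiv2}) ensures that the assignment $\gamma \mapsto \sigma_{X[Y,\D], \gamma}$ is a group homomorphism, hence a $\Bbbk$-structure. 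The same commutative square as in the proof of Theorem \ref{Result}(i) then shows that $(\T,\tau)$ acts on $(X[Y,\D], \sigma_{X[Y,\D]})$.

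For part (ii), the strategy is a Galois-equivariant toric downgrading carried out over $\Bbbk'$. Starting from a normal affine $(\T,\tau)$-variety $(X, \sigma)$, Proposition \ref{ToricDown}---which is already stated for an arbitrary, possibly finite, Galois extension---yields a $(\T,\tau) \times \Gamma$-equivariant closed embedding $(X, \sigma) \hookrightarrow (\Akp^n, \sigma')$ realising $(\T,\tau)$ as a subtorus of $(\Gkp^n, \tau')$ whose weight cone coincides with that of $X$. For the split $\Bbbk'$-torus $\Gkp^n$ acting on $\Akp^n$, Proposition \ref{splittorus} gives an AH-datum over $\Bbbk'$ for the ambient affine space, and the downgrading procedure of \cite[\S 11]{Alt} then extracts an AH-datum $(\omega_N, Y, \D)$ over $\Bbbk'$ for $X$. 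Exactly as in the proof of Theorem \ref{Result}(ii), one tracks the residual $\Gamma$-action through this downgrading: its action on the base of the polyhedral divisor supplies the $\Bbbk$-structure $\sigma_Y$ on $Y$, while the discrepancy between ${\sigma_Y}_{\gamma}^{*}(\D(m))$ and $\D(\tilde{\tau}_\gamma(m))$ is encoded by a map $h \colon \Gamma \to \Hom(\omega_N^{\vee} \cap M, \Bbbk'(Y)^*)$ satisfying conditions (\ref{eqdiv}) and (\ref{eqdiv2}).

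The only new point to verify, and thus the main obstacle, is that every ingredient of the proof of Theorem \ref{Result} remains valid when $\Bbbkb$ is replaced by the finite extension $\Bbbk'$. This turns out to be immediate: Proposition \ref{ToricDown} and Lemma \ref{Artin} are both formulated for an arbitrary Galois extension, and the toric downgrading of \cite[\S 11]{Alt} is algebraic in nature and depends only on the fact that the acting torus is split over the base field, which is precisely the hypothesis that $\Bbbk'$ splits $\T$. Consequently the whole argument of Theorem \ref{Result} descends unchanged from $\Bbbkb$ to $\Bbbk'$, with Proposition \ref{splittorus} taking over the role played at the $\Bbbkb$-level by the original Altmann--Hausen theorem.
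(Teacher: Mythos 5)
Your proposal is correct and follows essentially the same route as the paper: the official proof likewise observes that Proposition \ref{splittorus} lets the Altmann--Hausen presentation over $\Bbbkb$ be replaced by one over the splitting field $\Bbbk'$, and then combines this with Proposition \ref{ToricDown} to rerun the argument of Theorem \ref{Result} over the finite extension. Your write-up simply spells out in more detail the verification that each ingredient is already stated for an arbitrary Galois extension.
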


\begin{proof} 
In the proof of Theorem \ref{Result}, we use the Altmann-Hausen presentation over an algebraically closed field given in \cite{Alt} combined with a toric downgrading (see Proposition \ref{ToricDown}). By Proposition \ref{splittorus}, this presentation extends over any field. Hence, combining   Proposition \ref{splittorus} together with Proposition \ref{ToricDown}, we obtain the desired result.
\end{proof}

\begin{slogan}
By Corollary \ref{ResultSplit}, we obtain an effective method to compute an AH-datum of a $T$-action on a $\Bbbk$-variety $X$. We  consider  a finite Galois extension $\Bbbk'/\Bbbk$ that splits $T$. Then, we determine an AH-datum for the $T_{\Bbbk'}$-action on $X_{\Bbbk'}$ using a $\Gal(\Bbbk'/\Bbbk)$-equivariant embedding $X_{\Bbbk'}$ in some $\Akp^n$ (Proposition \ref{ToricDown}), and then we  deduce an AH-datum for the $T$ action on a $X$.
\end{slogan}
 
\begin{remark} \label{RmkLanglois}
Corollary \ref{ResultSplit} generalizes a result of Langlois (see \cite[Theorem 5.10]{Langlois} and \cite{LangloisCorrected}) that focuses on  $\Bbbk$-torus actions of  complexity one, where $\Bbbk$ is an arbitrary field. Indeed, the geometrico-combinatorial presentation described in \cite{Langlois} is  over a splitting field $\Bbbk'$  of the $\Bbbk$-torus (so $\Bbbk'/\Bbbk$ is a finite Galois extension).
\end{remark}

\section{Two-dimensional torus actions} \label{Section2Torus}

In this last section, we focus on two-dimensional torus actions on normal affine varieties.  Given an affine variety endowed with a two-dimensional torus action, we provide in Section \ref{Section2Torus1} a sufficient condition to have a presentation as in Lemma \ref{simpl} (see Example \ref{ExDim2}). 
In Section \ref{Section2Torus2}, using birational geometry tools, we relate the torsor described in Remark \ref{Ltorus} to certain Del Pezzo surface (see Proposition \ref{birgeom}). 

\subsection{Two-dimensional tori and finite subgroups of $\Gl_2(\Z)$} \label{Section2Torus1}
In this section, we have compiled results from the literature. We describe the set of two-dimensional tori in Proposition \ref{FiniteSubgroups} and we give a description of the Galois-cohomology set classifying $T$-torsors in Theorem \ref{ElizondoH1}: it is exactly the result we need to determine if the Altmann-Hausen presentation simplifies in the case of two-dimensional torus actions on normal affine varieties. 

Let $\Bbbk'/ \Bbbk$ be a non-necessarily finite Galois extension and denote by $\Gamma$ its Galois group. Let $(\Gkp^n, \tau)$ be a $\Bbbk$-torus. Recall that $\tau$ induces  a $\Gamma$-action  $\tilde{\tau}$ on the character lattice $M \cong \Z^n$   satisfying $\tilde{\tau}_{\gamma_1 \gamma_2} = \tilde{\tau}_{\gamma_1} \circ \tilde{\tau}_{\gamma_2}$, and a dual $\Gamma$-action   $\hat{\tau}$ on the cocharacter lattice $N \cong \Z^n$   satisfying $\hat{\tau}_{\gamma_1 \gamma_2} = \hat{\tau}_{\gamma_2} \circ \hat{\tau}_{\gamma_1}$ (see Remark \ref{representation}).

The image of a $\Gamma$-representation is a finite subgroup $\mathcal{G}$ of $\Gl_n(\Z)$ (see Appendix \ref{Appendix2Split}).    Moreover, if $\tau'$ is a $\Bbbk$-group structure on $\Gkp^n$ equivalent to $\tau$, then the associated finite subgroups of $\Gl_n(\Z)$ are conjugated. The converse is false (see Example \ref{IneqConjug}), except in dimension 1. Therefore, since $\Gl_1(\Z) = \{1, -1 \}$, a one-dimensional $\Bbbk$-torus  is, up to isomorphism, either a split $\Bbbk$-torus or a norm one $\Bbbk$-torus.

\begin{example}[{Inequivalent $\Gamma$-representation on $\Gl_2(\Z)$ having same image $\mathcal{G}_6$ in $\Gl_2(\Z)$}] \label{IneqConjug} 
Let $\Bbbk'/\Bbbk$ be a finite Galois extension of degree 4 such that $\Gal(\Bbbk'/\Bbbk) \cong \mathscr{C}_2 \times \mathscr{C}_2 = \{ (1;1) , (1; -1) ,$ $ (-1;1) , (-1;-1) \}$. Consider $\rho, \rho' $ be two representations of $\Gamma$ in $\Gl_{2}(\Z)$ defined by:
\begin{align*}
\rho : \  &\Gamma \to \Gl_{2}(\Z);  \
(1;1)   \mapsto I; \
(1;-1)  \mapsto s;  \ 
(-1;1)  \mapsto -s; \ 
(-1;-1) \mapsto -I; \\
\rho' : \   &\Gamma \to \Gl_{2}(\Z); \  
(1;1)   \mapsto I;  \
(1;-1)  \mapsto s;  \
(-1;1)  \mapsto -I; \
(-1;-1) \mapsto -s,  
\end{align*}
where $s$ is the matrix defined in Proposition \ref{FiniteSubgroups}. The representations $\rho$ and $\rho'$ are inequivalent. 
\end{example} 

In the next proposition, we give the classification of finite subgroups of $\Gl_2(\Z)$ up to conjugacy.

\begin{proposition}[{\cite[\S 1.10.1]{Lorenz}}] \label{FiniteSubgroups}
Let $\mathscr{D}_n$ (resp. $\mathscr{C}_n$) be the dihedral (resp. cyclic) group of order $n$, and let $\mathscr{S}_n$ be the group of permutations of a set with $n$ elements. Let 
$$d:= 
\begin{bmatrix}
-1 & 0 \\
0 & 1 \\
\end{bmatrix}; \ 
s:=
\begin{bmatrix}
0 & 1 \\
1 & 0 \\
\end{bmatrix}; \ and \ 
x:=
\begin{bmatrix}
1 & -1 \\
1 & 0 \\
\end{bmatrix}.
$$
The non-trivial conjugacy classes of finite subgroups of $\Gl_2(\Z)$ are: \hfill
\begin{multicols}{2}
\begin{adjustwidth}{-0.5cm}{-0.5cm}
\begin{flushleft}
\begin{tabular}{|c|c|c|c|}
  \hline
  Label & Order & Generators & Isomorphism type \\
  \hline
  $\mathcal{G}_{1}$ & 12 & $x, s$ &  $\mathscr{D}_{12} \cong \mathscr{S}_3 \times \mathscr{C}_2$ \\
  $\mathcal{G}_{2}$ & 8 & $d, s$ &  $\mathscr{D}_{8}$ \\
	$\mathcal{G}_{3}$ & 6 & $x^2, s$ & $\mathscr{D}_{6} \cong \mathscr{S}_3$ \\
	$\mathcal{G}_{4}$ & 6 & $x^2, -s$ & $\mathscr{D}_{6} \cong \mathscr{S}_3$ \\
	$\mathcal{G}_{5}$ & 4 & $d, -d$ & $\mathscr{C}_{2} \times \mathscr{C}_2$ \\
	$\mathcal{G}_{6}$ & 4 & $s, -s$ & $\mathscr{C}_{2} \times \mathscr{C}_2$ \\
		\hline 
\end{tabular}
\end{flushleft}
\columnbreak
\begin{flushright}
\begin{tabular}{|c|c|c|c|}
  \hline
  Label & Order & Generators & Isomorphism type \\
  \hline
	$\mathcal{G}_{7}$ & 6 & $x$ & $\mathscr{C}_{6}$  \\

	$\mathcal{G}_{8}$ & 4 & $ds$ & $\mathscr{C}_{4}$ \\
	$\mathcal{G}_{9}$ & 3 & $x^2$ & $\mathscr{C}_{3}$ \\
	$\mathcal{G}_{10}$ & 2 & $x^3=-id$ & $\mathscr{C}_{2}$ \\
	$\mathcal{G}_{11}$ & 2 & $d$ & $\mathscr{C}_{2}$ \\
	$\mathcal{G}_{12}$ & 2 & $s$ & $\mathscr{C}_{2}$ \\
  \hline
\end{tabular}
\end{flushright}
\end{adjustwidth}
\end{multicols}
\noindent The following diagrams represents the inclusions of normal subgroups (the number is the index): \hfill
\vspace{-0.25cm}
\small
\begin{multicols}{2}
\begin{adjustwidth}{-1cm}{-1.25cm}
\begin{flushleft}
\begin{tikzpicture}
  \matrix (m) [matrix of math nodes,row sep=1.5em,column sep=2.75em,minimum width=2em]
  {
  & &   \langle id \rangle &    \\
  & \langle x^2 \rangle & & \langle -id \rangle  \\
 \langle x^2, -s \rangle &  \langle x^2, s \rangle & \langle x \rangle   & \\
 & & & & \\
  & & \mathcal{G}_{1} = \langle x,s \rangle  &   \\
};
  \path[-stealth]
    (m-1-3) edge node [above] {$3$} (m-2-2)
            edge node [above] {$2$} (m-2-4)
    (m-2-2) edge node [above] {$2$} (m-3-1)
		        edge  (m-3-2)
						edge node [above] {$2$} (m-3-3)
						edge  (m-5-3)
		(m-2-4) edge node [above] {$3$} (m-3-3)
		        edge  (m-5-3)
		(m-3-1) edge node [below] {$2$} (m-5-3)
		(m-3-2) edge  (m-5-3)
		(m-3-3) edge node [right] {$2$} (m-5-3);
\end{tikzpicture}

\end{flushleft}
\columnbreak
\begin{flushright}
\begin{tikzpicture}
  \matrix (m) [matrix of math nodes,row sep=2em,column sep=2.25em,minimum width=2em]
  {
 & & \langle id \rangle & &  \\
 \langle d \rangle & \langle -d \rangle & \langle -id  \rangle &  \langle -s \rangle & \langle s \rangle\\
\langle d, -d \rangle & \langle ds \rangle & &  &   \langle s, -s \rangle \\
   & & \mathcal{G}_{2} = \langle d,s \rangle & &  \\
};
  \path[-stealth]
    (m-1-3) edge node [above] {$2$} (m-2-1)
		        edge   (m-2-2)
            edge node [left] {$2$} (m-2-3)
						edge   (m-2-4)
						edge node [above] {$2$} (m-2-5)
    (m-2-1) edge node [left] {$2$} (m-3-1)
		(m-2-2) edge   (m-3-1)
		(m-2-3) edge   (m-3-1)
		        edge   (m-3-2)
						edge node [right] {$4$} (m-4-3)
						edge   (m-3-5)
		(m-2-4) edge   (m-3-5)
		(m-2-5) edge node [right] {$2$} (m-3-5)
		(m-3-1) edge node [below] {$2$} (m-4-3)
		(m-3-2) edge   (m-4-3)
		(m-3-5) edge node [below] {$2$} (m-4-3);
\end{tikzpicture}
\end{flushright}
\end{adjustwidth}
\end{multicols}
\normalsize
\end{proposition}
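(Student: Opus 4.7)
The plan is to combine the averaging trick with the crystallographic restriction to list all possible abstract types, and then to count $\Gl_2(\Z)$-conjugacy classes within each type. For any finite subgroup $G \subset \Gl_2(\Z)$, averaging an arbitrary positive definite quadratic form over $G$ produces a $G$-invariant positive definite form, so $G$ embeds in the associated orthogonal group, which is abstractly isomorphic to $O(2)$. The classification of finite subgroups of $O(2)$ then forces $G$ to be cyclic $\mathscr{C}_n$ or dihedral $\mathscr{D}_{2n}$. The crystallographic restriction bounds element orders: the characteristic polynomial over $\Z$ of a finite-order element divides $x^m - 1$ and has degree $\leq 2$, so it is a product of cyclotomic polynomials $\Phi_d$ with $\phi(d) \leq 2$, forcing $d \in \{1,2,3,4,6\}$. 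Hence every element order, and every $n$, lies in $\{1,2,3,4,6\}$, matching the list of abstract isomorphism types in the table.

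Next, I would enumerate the $\Gl_2(\Z)$-conjugacy classes of individual finite-order elements. For orders $3, 4, 6$ the minimal polynomial is forced to be the cyclotomic polynomial $\Phi_d$; since $\Z[\zeta_d]$ is a principal ideal domain for each such $d$, the corresponding $\Z[\zeta_d]$-module structure on $\Z^2$ is unique up to isomorphism, yielding exactly one conjugacy class each, represented by $x^2$, $ds$, $x$ respectively. For order $2$ there are three classes: the central $-I$ (characteristic polynomial $(x+1)^2$), and $d$, $s$ (both with characteristic polynomial $x^2-1$). The involutions $d$ and $s$ are not $\Gl_2(\Z)$-conjugate: any $P = \begin{pmatrix} p & q \\ r & t \end{pmatrix} \in \Gl_2(\Z)$ satisfying $P d = s P$ must have $r = -p$ and $t = q$, forcing $\det P = 2 p q$, which cannot equal $\pm 1$.

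Then I would assemble the cyclic and dihedral subgroup classes. The element-conjugacy classes of orders $2, 3, 4, 6$ produce the six cyclic subgroup classes $\mathcal{G}_7,\ldots,\mathcal{G}_{12}$. For dihedral extensions, for each rotation subgroup one considers the possible reflections; the delicate point is that, for a fixed rotation subgroup, two inequivalent reflections can generate non-conjugate dihedral groups. This explains the splitting into the two $\mathscr{S}_3$'s ($\mathcal{G}_3 = \langle x^2, s \rangle$ and $\mathcal{G}_4 = \langle x^2, -s \rangle$, distinguished by the $\Gl_2(\Z)$-conjugacy class of their reflections) and into the two Klein four-groups ($\mathcal{G}_5 = \langle d, -d \rangle$ and $\mathcal{G}_6 = \langle s, -s \rangle$, similarly distinguished). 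The maximal groups $\mathcal{G}_1 \cong \mathscr{D}_{12}$ and $\mathcal{G}_2 \cong \mathscr{D}_8$ each absorb reflections of both types, so each yields a single conjugacy class; they are the full stabilizers in $\Gl_2(\Z)$ of the hexagonal and square lattices respectively.

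Finally, the normal-subgroup inclusion diagrams are verified by direct matrix computation: one checks closure under conjugation by generators of the ambient group and reads off indices from orders. The hardest part of the argument is distinguishing genuinely non-conjugate subgroups with the same abstract isomorphism type, which requires lattice-theoretic invariants (such as reflection types or fixed sublattices) rather than purely group-theoretic data.
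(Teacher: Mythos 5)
Your overall strategy --- averaging to embed a finite subgroup of $\Gl_2(\Z)$ into $O(2)$, the crystallographic restriction bounding orders by $\{1,2,3,4,6\}$, the Latimer--MacDuffee/PID argument giving a single class of elements of each order $3,4,6$, the explicit computation showing $d\not\sim s$, and normalizer arguments for the maximal groups --- is a standard and essentially correct route to this classification (the paper itself gives no proof, only the citation to Lorenz). However, one of the two delicate separations is justified by an invariant that does not work. You claim that $\mathcal{G}_3=\langle x^2,s\rangle$ and $\mathcal{G}_4=\langle x^2,-s\rangle$ are ``distinguished by the $\Gl_2(\Z)$-conjugacy class of their reflections,'' and that $\mathcal{G}_1$ ``absorbs reflections of both types.'' Both statements are false: all six reflections of $\mathcal{G}_1=\langle x,s\rangle$ are $\Gl_2(\Z)$-conjugate to $s$, and none to $d$. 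For instance $x^2s=\left(\begin{smallmatrix}-1&0\\-1&1\end{smallmatrix}\right)$ has $(+1)$- and $(-1)$-eigenlattices $\Z(0,1)$ and $\Z(2,1)$, which span only an index-$2$ sublattice of $\Z^2$, so $x^2s\sim s$; the same computation applies to $xs$, $-s$, $-xs$, $-x^2s$. (This reflects the fact that the hexagonal lattice admits only one $\Gl_2(\Z)$-type of reflection, unlike the square lattice where $d,-d$ are split and $s,-s$ are not; so your reflection-type argument is valid for $\mathcal{G}_5$ versus $\mathcal{G}_6$, but collapses for $\mathcal{G}_3$ versus $\mathcal{G}_4$.)

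To separate $\mathcal{G}_3$ from $\mathcal{G}_4$ you need a genuinely different invariant, and two standard repairs are available. (a) Any $P\in\Gl_2(\Z)$ conjugating $\mathcal{G}_3$ to $\mathcal{G}_4$ must normalize their common rotation subgroup $\langle x^2\rangle$ (the unique subgroup of order $3$ in each); but the centralizer of $x^2$ is the unit group of $\Z[\zeta_3]$, namely $\langle x\rangle$, whence $N_{\Gl_2(\Z)}(\langle x^2\rangle)=\mathcal{G}_1$, and both $\mathcal{G}_3$ and $\mathcal{G}_4$ are normal of index $2$ in $\mathcal{G}_1$, so conjugation preserves each of them. (b) Alternatively, the sublattice of $\Z^2$ generated by the fixed vectors of the three reflections has index $1$ for $\mathcal{G}_3$ (vectors $(1,0),(0,1),(1,1)$) and index $3$ for $\mathcal{G}_4$ (vectors $(1,-1),(2,1),(1,2)$) --- the root-lattice versus weight-lattice dichotomy for $A_2$. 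With either repair inserted, and keeping your normalizer computations $N(\langle ds\rangle)=\mathcal{G}_2$ and $N(\langle x\rangle)=\mathcal{G}_1$ for uniqueness of the $\mathscr{D}_8$ and $\mathscr{D}_{12}$ classes, the argument is complete; as written, the non-conjugacy of $\mathcal{G}_3$ and $\mathcal{G}_4$ --- one of the two points you yourself flag as the hard part --- is not established.
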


\begin{example} \label{Extensions} Using inverse Galois theory, we can realize each of the  conjugacy classes $\mathcal{G}_i$ by a $\Q$-torus that splits over a finite Galois extension $\Bbbk'/\Bbbk$,  such that $\Gal(\Bbbk'/\Bbbk) \cong \mathcal{G}_i$. \hfill
\small
\begin{adjustwidth}{-0.25cm}{-0.25cm}
\begin{multicols}{2}
\begin{flushleft}
\begin{tabular}{|c|c|}
  \hline
  Label & Galois extension of $\Q$ of Galois group $\mathcal{G}_i$ \\
  \hline
  $\mathcal{G}_{1}$ & $\Q \left( \sqrt[3]{1+\sqrt{2}} \right)$ \cite[Exercise 8.9]{Escofier} \\
  $\mathcal{G}_{2}$ &  $\Q \left( \sqrt[4]{2},i \right)$ \cite[\S 8.7.3]{Escofier}  \\
	$\mathcal{G}_{3}$ &  $\Q \left( \sqrt[3]{2}, j \right)$ \cite[\S 8.3]{Escofier}  \\
	$\mathcal{G}_{4}$ &  $\Q \left( \sqrt[3]{2}, j \right)$  \\
	$\mathcal{G}_{5}$ & $\Q(\sqrt{2}, \sqrt{3})$ (see Example \ref{ExampleBiquadratic})   \\
	$\mathcal{G}_{6}$ &  $\Q(\sqrt{2}, \sqrt{3})$  \\
		\hline 
\end{tabular}
\end{flushleft}
\columnbreak
\begin{flushright}
\begin{tabular}{|c|c|}
  \hline
  Label & Galois extension of $\Q$ of Galois group $\mathcal{G}$  \\
  \hline
	$\mathcal{G}_{7}$ &  $\Q \left( e^{\frac{2i \pi}{7}} \right)$ \cite[\S 9.5]{Escofier} \\
	$\mathcal{G}_{8}$ &  $\Q \left( e^{\frac{2i \pi}{5}} \right)$ \cite[\S 9.5]{Escofier}  \\
	$\mathcal{G}_{9}$ &  $\Q \left( \cos \left( \frac{2 \pi}{7} \right) \right)$ 
		\\
	$\mathcal{G}_{10}$ & $\Q(i)$   \\
	$\mathcal{G}_{11}$ & $\Q(i)$  \\
	$\mathcal{G}_{12}$ & $\Q(i)$  \\
  \hline
\end{tabular}
\end{flushright}
\end{multicols}
\end{adjustwidth}
\normalsize
\end{example}

Based on Proposition \ref{FiniteSubgroups}, a  classification of two-dimensional $\Bbbk$-tori is obtained in \cite{VosI}. 

\begin{proposition}[{\cite{VosI}}] \label{Qtori} 
Let $T$ be a two-dimensional $\Bbbk$-torus, and let $\tau$ be the corresponding $\Bbbk$-group structure on $T_{\Bbbkb}$. Let $\Bbbk'$ be a splitting field of $T$ such that $\Gal(\Bbbk'/\Bbbk) \cong  \mathcal{G}$, where $\mathcal{G}$ is the image of $\tilde{\tau}$. Then, $T$ is isomorphic to one of the following tori. \hfill
\smallskip
\begin{adjustwidth}{-1.2cm}{-1.2cm}
\small
\begin{center}
\begin{tabular}{|c|l|}
  \hline
 $\Gal(\Bbbk'/\Bbbk)$ & Corresponding $\Bbbk$-tori \\
  \hline
 $\mathcal{G}_{1} = \langle x,s \rangle$ &      $\text{R}_{\Bbbk_1/\Bbbk} \left( \text{R}^{(1)}_{\Bbbk'/\Bbbk_1} \left( \Gkp \right) \right) \cap  \text{R}_{\Bbbk_2/\Bbbk} \left( \text{R}^{(1)}_{\Bbbk_3/\Bbbk_2} \left( \Gkth \right) \right)^{\vphantom{d}}$,   where $\Bbbk_1=\Bbbk'^{\langle x^2 \rangle}$ is a Galois extension  of $\Bbbk$ of \\ &  degree 4, where $\Bbbk_2=\Bbbk'^{\langle x^is, - x^is \rangle}$,   and where  $\Bbbk_3=\Bbbk'^{\langle x^is \rangle}$ for $i \in \{ 0, \dots ,5 \}$ $(\Bbbk_3/\Bbbk_2$ is Galois).   \\ 
  $\mathcal{G}_{2}= \langle d,s \rangle$   &  $\text{R}_{\Bbbk_2/\Bbbk} \left( \text{R}^{(1)}_{\Bbbk_1/\Bbbk_2} \left( \Gko \right) \right)$, where $\Bbbk_1=\Bbbk'^{\langle d \rangle}$ and $\Bbbk_2=\Bbbk'^{\langle d, -d \rangle}$, or $\Bbbk_1=\Bbbk'^{\langle s \rangle}$ and $\Bbbk_2=\Bbbk'^{\langle s, -s \rangle}$.   \\
	$\mathcal{G}_{3}= \langle x^2, s \rangle$  & $\text{R}_{\Bbbk_1/\Bbbk} \left( \text{R}^{(1)}_{\Bbbk'/\Bbbk_1} \left( \Gkp \right) \right) \cap \text{R}_{\Bbbk_2/\Bbbk} \left(\Gkt \right)$, where $\Bbbk_1=\Bbbk'^{\langle x^2 \rangle}$, and where $\Bbbk_2$  goes through the cubic \\ & subfields $\Bbbk'^{\langle s \rangle}$, $\Bbbk'^{\langle x^2 s \rangle}$, and $\Bbbk'^{\langle x^4 s \rangle}$.   \\
	$\mathcal{G}_{4}= \langle x^2, -s \rangle$  &  $\text{R}_{\Bbbk_1/\Bbbk} \left( \text{R}^{(1)}_{\Bbbk'/\Bbbk_1} \left( \Gkp \right) \right) \cap \text{R}_{\Bbbk_2/\Bbbk} \left( \text{R}^{(1)}_{\Bbbk'/\Bbbk_2} \left( \Gkp \right) \right)$, where $\Bbbk_1=\Bbbk'^{\langle x^2 \rangle}$, and where $\Bbbk_2$  goes through the \\ & cubic subfields $\Bbbk'^{\langle -s \rangle}$, $\Bbbk'^{\langle -x^2 s \rangle}$, and $\Bbbk'^{\langle -x^4 s \rangle}$.   \\ 
	$\mathcal{G}_{5}= \langle d, -d \rangle$ & $\text{R}^{(1)}_{\Bbbk_1/\Bbbk} \left( \Gko \right) \times \text{R}^{(1)}_{\Bbbk_2/\Bbbk} \left( \Gkt \right)$,   where  $\Bbbk_1 \Bbbk_2= \Bbbk'$.  \\
	 $\mathcal{G}_{6}= \langle s, -s \rangle$ &   $\text{R}_{\Bbbk_1/\Bbbk} \left(     \text{R}^{(1)}_{\Bbbk'/\Bbbk_1} \left( \Gkp \right) \right)$,   where   $\Bbbk_1$ goes through the Galois extensions $\Bbbk'^{\langle s \rangle}$, $\Bbbk'^{\langle -s \rangle}$, and $\Bbbk'^{\langle -id \rangle}$ of $\Bbbk$.  \\
$\mathcal{G}_{7} = \langle x \rangle$ & $\text{R}_{\Bbbk_2/\Bbbk} \left(     \text{R}^{(1)}_{\Bbbk'/\Bbbk_2} \left( \Gkp \right) \right) \cap \text{R}_{\Bbbk_1/\Bbbk} \left(     \text{R}^{(1)}_{\Bbbk'/\Bbbk_1} \left( \Gkp \right) \right)$, where $\Bbbk_1=\Bbbk'^{\langle x^2 \rangle}$ and $\Bbbk_2 = \Bbbk'^{\langle -id \rangle}$.   \\
$\mathcal{G}_{8}= \langle ds \rangle$  &  $\text{R}_{\Bbbk_1/\Bbbk} \left( \text{R}^{(1)}_{\Bbbk'/\Bbbk_1} \left( \Gkp \right) \right)$,  where   $\Bbbk_1 = \Bbbk'^{\langle -id \rangle}$. \\
$\mathcal{G}_{9}= \langle x^2 \rangle$ &  $\text{R}^{(1)}_{\Bbbk'/\Bbbk} \left( \Gkp \right)$.    \\
$\mathcal{G}_{10}= \langle  -id \rangle$ &  $\text{R}^{(1)}_{\Bbbk'/\Bbbk} \left( \Gkp \right) \times \text{R}^{(1)}_{\Bbbk'/\Bbbk} \left( \Gkp \right)$.  \\
$\mathcal{G}_{11}= \langle d \rangle$ &  $\text{R}^{(1)}_{\Bbbk'/\Bbbk} \left( \Gkp \right) \times \Gk$. \\
$\mathcal{G}_{12}= \langle s \rangle$	 &  $\text{R}_{\Bbbk'/\Bbbk} \left( \Gkp \right)$.  \\
  \hline
\end{tabular}
\end{center}
\normalsize
\end{adjustwidth}
\end{proposition}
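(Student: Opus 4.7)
The plan is to proceed case by case, using the anti-equivalence between the category of $\Bbbk$-tori split by $\Bbbk'$ and the category of torsion-free finitely generated $\Z[\Gal(\Bbbk'/\Bbbk)]$-modules (the character lattice $M$). Fix a two-dimensional $\Bbbk$-torus $(T,\tau)$ and let $\mathcal{G}\subset \Gl_2(\Z)$ be the image of $\tilde\tau\colon \Gamma\to \Gl(M)$. The kernel of $\tilde\tau$ corresponds by Galois theory to a finite Galois extension $\Bbbk'/\Bbbk$ with $\Gal(\Bbbk'/\Bbbk)\cong \mathcal{G}$, and the problem reduces to classifying faithful $\mathcal{G}$-lattices of rank $2$ up to isomorphism. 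By Proposition \ref{FiniteSubgroups}, there are exactly twelve conjugacy classes $\mathcal{G}_i$ to consider, which gives the twelve rows of the table.

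The second step is to translate the building blocks appearing in the right-hand column into character lattices. Recall that the character lattice of $\text{R}_{\Bbbk'/\Bbbk}(\Gkp)$ is the induced lattice $\Z[\mathcal{G}]$, that of $\text{R}^{(1)}_{\Bbbk'/\Bbbk}(\Gkp)$ is the augmentation ideal $I_{\mathcal{G}}:=\ker(\Z[\mathcal{G}]\to \Z)$, and that of a split factor $\Gk$ is the trivial lattice $\Z$. Products of tori correspond to direct sums of lattices, while an intersection of tori inside a common ambient torus corresponds to a sum of the dual cocharacter sublattices (equivalently, a quotient of the character lattice). Thus for each class $\mathcal{G}_i$ I would first identify all $\mathcal{G}_i$-stable rank one sublattices of $\Z^2$, the associated $\mathcal{G}_i$-quotients, and the fixed sublattices under every subgroup $H\leq \mathcal{G}_i$; each such piece gives a natural morphism to a Weil restriction or a norm one torus attached to the fixed field $\Bbbk'^{H}$.

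The bulk of the proof then consists of realising the $\mathcal{G}_i$-lattice $M$ explicitly as a kernel or product of such standard lattices. For the simplest cases ($\mathcal{G}_9$--$\mathcal{G}_{12}$, all cyclic of order $\leq 3$ or Klein four with reducible action), the lattice is either $I_{\mathcal{G}_i}$ itself, a direct sum of two one-dimensional pieces, or $\Z[\mathcal{G}_i]$, giving directly a norm one torus, a product, or a Weil restriction. For the dihedral and cyclic-of-order-$6$ cases ($\mathcal{G}_1,\mathcal{G}_3,\mathcal{G}_4,\mathcal{G}_7$), I would exhibit two distinct index-$2$ or index-$3$ subgroups $H_1,H_2\leq \mathcal{G}_i$ such that $M$ injects $\mathcal{G}_i$-equivariantly into $\Z[\mathcal{G}_i/H_1]\oplus \Z[\mathcal{G}_i/H_2]$; dually this realises $T$ as the intersection inside the ambient product of the kernels defining each factor, which produces the displayed formulas involving $\text{R}_{\Bbbk_j/\Bbbk}$ and $\text{R}^{(1)}_{\Bbbk_j/\Bbbk}$. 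The freedom in the choice of an index-$3$ subgroup (there are three conjugate copies in $\mathscr{S}_3$) is exactly what accounts for the three possibilities of $\Bbbk_2$ in the rows corresponding to $\mathcal{G}_3$, $\mathcal{G}_4$, and $\mathcal{G}_6$.

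The main obstacle is bookkeeping: verifying in each row that the morphism from $M$ to the proposed sum of standard lattices is injective with cokernel trivial (so that the intersection on the torus side is exactly $T$ and not a finite cover or quotient), and checking that all choices of intermediate subfield $\Bbbk_j$ indeed yield non-isomorphic presentations representing the same abstract torus up to isomorphism. This amounts to a finite but delicate verification of lattice invariants (Smith normal forms and fixed-point indices under each subgroup $H\leq \mathcal{G}_i$), which can be carried out by hand for each of the twelve cases and is already done in \cite{VosI}; I would thus ultimately cite Voskresenski's classification for the bookkeeping and limit the exposition to the conceptual reduction above.
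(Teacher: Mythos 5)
Your outline follows the standard Galois--lattice dictionary (tori split by $\Bbbk'$ versus faithful $\Gal(\Bbbk'/\Bbbk)$-lattices, induced modules for Weil restrictions, the twelve conjugacy classes of Proposition \ref{FiniteSubgroups}) and, like the paper itself, ultimately defers the case-by-case verification to Voskresenski\u{\i} \cite{VosI}; since the paper states this proposition purely as a cited result with no independent proof, your approach matches its treatment. Two small points to watch if you were to carry out the details: the augmentation ideal is the \emph{cocharacter} lattice of the norm one torus (its character lattice is the quotient of $\Z[\mathcal{G}]$ by the norm element), and the several choices of intermediate field $\Bbbk_j$ within a single row of the table generally yield pairwise non-isomorphic tori rather than different presentations of one torus (compare Example \ref{IneqConjug}).
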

\smallskip

We have described the two-dimensional tori, now we describe their torsors. 
In \cite[\S 5]{Elizondo}, there is a complete description of the Galois-cohomology set used to  classify   $T$-torsors, where $T$ is a two-dimensional $\Bbbk$-torus. To each Galois extension $\Bbbk'/\Bbbk$ such that $\Gal(\Bbbk'/\Bbbk) \cong \mathcal{G}$ is a finite subgroup   of $\Gl_2(\Z)$ (see {\cite[Proposition 3.7]{Elizondo}),   an expression of $H^1(\mathcal{G}, \Aut^{T_{\Bbbk'}}(T_{\Bbbk'}))$ depending on a Brauer group  is obtained. In order to state  results of \cite[\S 5]{Elizondo},  we briefly introduce  some notations.

Let $(\T=\Spec(\Bbbk'[M]), \tau)$ be a two-dimensional $\Bbbk$-torus. By {\cite[Proposition 3.7]{Elizondo}, we can assume that  $\Bbbk'/\Bbbk$ is a finite Galois extension such that the $\Gamma$-representation $\tilde{\tau} : \Gamma \to \Gl(M) \cong \Gl_2(\Z)$ is injective. Denote its image by $\mathcal{G} \cong  \Gamma = \Gal(\Bbbk'/\Bbbk)$. In \cite[\S 5]{Elizondo}, to compute  $H^1(\mathcal{G}, \Aut^{T_{\Bbbk'}}(T_{\Bbbk'}))$,  the authors distinguish if $\mathcal{G}$ has an element of order 3 or not. 
If $\mathcal{G}$ does not have an element of order 3, then, up to conjugacy,  $\mathcal{G}$ is a subgroup of $\mathcal{G}_{2}$.
If $\mathcal{G}$ has an element of order 3, then, up to conjugacy,  $\mathcal{G}$ is one of the following subgroup of $\mathcal{G}_{1}$: $\mathcal{G}_{1}$,  $\mathcal{G}_{3}$, $\mathcal{G}_{4}$, $\mathcal{G}_{7}$ or $\mathcal{G}_{9}$. In this context, we obtain subextensions of $\Bbbk'/\Bbbk$
\vspace{-0.25cm}
\small 
\begin{center}
\begin{tikzpicture}
  \matrix (m) [matrix of math nodes,row sep=0.01cm,column sep=1cm,minimum width=0.01cm]
  {
       & \Bbbk_1 &         &        \\
 \Bbbk &         & \Bbbk_3 & \Bbbk', \\
       & \Bbbk_2 &         &        \\};
  \path[-stealth]
    (m-2-1) edge   (m-1-2)
		        edge   (m-3-2)
		(m-1-2) edge   (m-2-3)
    (m-3-2) edge   (m-2-3)
		(m-2-3) edge   (m-2-4);
\end{tikzpicture}
\end{center}
\normalsize
where $\Bbbk_1:= {\Bbbk'}^{\mathcal{G} \cap \mathcal{G}_6}$, $\Bbbk_2:= {\Bbbk'}^{\mathcal{G} \cap \mathcal{G}_3}$,  and $\Bbbk_3:= {\Bbbk'}^{\mathcal{G} \cap \mathcal{G}_6 \cap \mathcal{G}_3}$.
Consider the homomorphism  $\beta : \mathrm{Br}(\Bbbk_1/\Bbbk) \to \mathrm{Br}(\Bbbk_3/\Bbbk_2)$ obtained by base extension from $\Bbbk$ to $\Bbbk_2$. 
Consider the homomorphism  $\eta : \mathrm{Br}(\Bbbk_3/\Bbbk_1) \to \mathrm{Br}(\Bbbk_2/\Bbbk)$ induced by the norm map $N_{\Bbbk_1/\Bbbk}(\Bbbk) : \Bbbk_1^* \to \Bbbk^*$ (see Definition \ref{DefQuasiTrivialNorm}). We denote the kernel of $\eta$ by $\mathrm{Br}_{\eta}(\Bbbk_1/\Bbbk |\Bbbk_3/\Bbbk_2)$. 
In Theorem \ref{ElizondoH1}, we summarize main results of \cite{Elizondo}. Compare with Theorem \ref{Qtori} combined with Lemma \ref{H1Norm1}}.

\begin{theorem}[{\cite[Theorems 5.3 \& 5.5]{Elizondo}}]  \label{ElizondoH1} With the previous notations, we have \hfill 
\small
\begin{multicols}{2}
\begin{adjustwidth}{-0.75cm}{-0.75cm} 
\begin{flushleft}
\begin{tabular}{|c|c|}
  \hline
  $\mathcal{G}$ &  $H^1 \left( \mathcal{G}, \Aut^{\T}(\T) \right)^{\vphantom{d}}$ \\
  \hline
  $\langle x,s \rangle$ &  $\frac{\mathrm{Br} \left( {\Bbbk'}^{\langle s \rangle}/{\Bbbk'}^{\langle x^2, s \rangle} \right)^{\vphantom{d}}}{\beta \left( \mathrm{Br}\left({\Bbbk'}^{\langle s,-s \rangle}/\Bbbk\right) \right)} \oplus \mathrm{Br}_{\eta} \left({\Bbbk'}^{\langle s,-s \rangle}/\Bbbk |{\Bbbk'}^{\langle s \rangle}/{\Bbbk'}^{\langle x^2, s \rangle}\right)$   \\
  $\langle d,s \rangle$ &   $\mathrm{Br} \left(  {\Bbbk'}^{\langle d \rangle} / {\Bbbk'}^{\langle d, -d \rangle} \right)$  \\
	$\langle x^2, s \rangle$ & $\mathrm{Br}\left({\Bbbk'}^{\langle s \rangle}/\Bbbk \right)$     \\
	$\langle x^2, -s \rangle$ &  $\frac{\mathrm{Br} \left(\Bbbk'/{\Bbbk'}^{\langle x^2 \rangle} \right)^{\vphantom{d}}}{\beta \left(\mathrm{Br}({\Bbbk'}^{\langle -s \rangle}/\Bbbk) \right)}$    \\
	$\langle d, -d \rangle$ &  $\mathrm{Br} \left( {\Bbbk'}^{\langle d \rangle}/{\Bbbk} \right) \oplus  \mathrm{Br} \left(  {\Bbbk'}^{\langle d \rangle}/ {\Bbbk}\right)^{\vphantom{d}}$     \\
		\hline 
\end{tabular}
\end{flushleft}
\columnbreak
\begin{flushright}
\vspace{-0.25cm}
\begin{tabular}{|c|c|}
  \hline
  $\mathcal{G}$ &  $H^1 \left( \mathcal{G}, \Aut^{\T}(\T) \right)^{\vphantom{d}}$ \\
  \hline
	$\langle s, -s \rangle$ &   $\mathrm{Br} \left( {\Bbbk'} / {\Bbbk'}^{\langle -id \rangle} \right)^{\vphantom{d}}$    \\
  $\langle x \rangle$ &   $\frac{\mathrm{Br} \left( \Bbbk'/{\Bbbk'}^{\langle x^2 \rangle} \right)}{\beta \left( \mathrm{Br}(\Bbbk'/\Bbbk) \right)} \oplus \mathrm{Br}_{\eta} \left(\Bbbk'/\Bbbk |\Bbbk'/{\Bbbk'}^{\langle x^2 \rangle} \right)$ \\
  $\langle ds \rangle$ &  $\mathrm{Br} \left( \Bbbk'/ {\Bbbk'}^{\langle -id \rangle} \right)$   \\
	$\langle x^2 \rangle$ &  $\mathrm{Br}(\Bbbk'/\Bbbk)^{\vphantom{d}}$    \\
	$\langle -id \rangle$ &   $\mathrm{Br}(\Bbbk'/\Bbbk) \oplus \mathrm{Br}(\Bbbk'/\Bbbk)$  \\
	$\langle d \rangle$ &   $\mathrm{Br}(\Bbbk'/\Bbbk)$  \\
	$\langle s \rangle$ &  $\{ 0 \}$   \\
		\hline 
\end{tabular}
\end{flushright}
\end{adjustwidth}
\end{multicols}
\normalsize
\end{theorem}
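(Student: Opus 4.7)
\emph{Strategy.} The plan is to dispatch the twelve conjugacy classes $\mathcal{G}_i$ one at a time, using the classification of two-dimensional tori in Proposition \ref{Qtori} together with three standard tools from the Galois cohomology of algebraic tori: Hilbert's 90 Theorem (which forces $H^1(\Gamma, Q) = 0$ for any quasi-trivial $Q$, via Shapiro's lemma applied to each Weil-restriction factor), the computation of $H^1$ for norm-one tori recorded in Lemma \ref{H1Norm1}, and the long exact sequence coming from an appropriate resolution by quasi-trivial tori.

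\emph{Easy classes.} The case $\mathcal{G}_{12} = \langle s \rangle$ is immediate since the corresponding torus $\mathrm{R}_{\Bbbk'/\Bbbk}(\Gkp)$ is quasi-trivial. The classes $\mathcal{G}_5$, $\mathcal{G}_{10}$, $\mathcal{G}_{11}$, whose tori are direct products, reduce to factor-by-factor computations by additivity of $H^1$ on products. The classes $\mathcal{G}_2$, $\mathcal{G}_3$, $\mathcal{G}_6$, $\mathcal{G}_8$, $\mathcal{G}_9$ reduce directly to Lemma \ref{H1Norm1}: applying Galois cohomology to the defining sequence
\begin{equation*}
1 \longrightarrow \mathrm{R}^{(1)}_{\Bbbk'/\Bbbk_1}(\Gkp) \longrightarrow \mathrm{R}_{\Bbbk'/\Bbbk_1}(\Gkp) \longrightarrow \Gk \longrightarrow 1
\end{equation*}
kills the middle term by Shapiro plus Hilbert 90, so the connecting map identifies $H^1$ with $\Bbbk^*/N_{\Bbbk'/\Bbbk_1}((\Bbbk')^*)$, which equals $\mathrm{Br}(\Bbbk'/\Bbbk_1)$ in the cyclic-extension cases appearing here. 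For $\mathcal{G}_3 = \langle x^2, s \rangle$ the relevant intermediate field $\Bbbk_2 = {\Bbbk'}^{\langle s \rangle}$ is singled out and the intersection with the quasi-trivial piece contributes nothing.

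\emph{Compound classes.} The main obstacle is handling $\mathcal{G}_1$, $\mathcal{G}_4$, $\mathcal{G}_7$, whose tori are intersections $T = T_1 \cap T_2$ of two Weil-restriction-type tori inside a larger quasi-trivial torus $Q$. The plan is to construct a short exact sequence
\begin{equation*}
1 \longrightarrow T \longrightarrow Q \longrightarrow T'' \longrightarrow 1,
\end{equation*}
where $T''$ is a product of norm-one tori whose $H^1$ is known from the previous step. Since $H^1(\Gamma, Q) = 0$, the long exact sequence collapses to
\begin{equation*}
Q(\Bbbk) \longrightarrow T''(\Bbbk) \longrightarrow H^1(\Gamma, T) \longrightarrow 0,
\end{equation*}
so $H^1(\Gamma, T)$ is identified with the cokernel of $Q(\Bbbk) \to T''(\Bbbk)$. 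Interpreting this cokernel via Lemma \ref{H1Norm1} gives a quotient of $\mathrm{Br}({\Bbbk'}^{\langle s \rangle}/{\Bbbk'}^{\langle x^2, s \rangle})$ by the image of base extension $\beta$, together with a kernel contribution $\mathrm{Br}_\eta$ arising from the compatibility with the norm map $\eta$; this produces the direct-sum formulas in the statement.

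\emph{Main obstacle.} The hard part is the explicit construction of the resolution $1 \to T \to Q \to T'' \to 1$ in the compound cases, which dually requires producing a coflasque presentation of the character module $M$ of $T$ as a $\mathcal{G}$-lattice compatible with both factors of the intersection, and then identifying the cokernel term with the precise combination of relative Brauer groups asserted. Equivalently, one runs the Hochschild--Serre spectral sequence
\begin{equation*}
H^p\!\left(\mathcal{G}/H, H^q(H, (\Bbbk')^*)\right) \Longrightarrow H^{p+q}(\mathcal{G}, (\Bbbk')^*)
\end{equation*}
for $H \in \{ \mathcal{G} \cap \mathcal{G}_6, \ \mathcal{G} \cap \mathcal{G}_3 \}$ and splices the resulting five-term sequences, with Hilbert 90 controlling the $q=1$ terms. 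The bookkeeping for which class of subfield $\Bbbk_1, \Bbbk_2, \Bbbk_3$ is used, and how $\beta$ and $\eta$ interact, is the most delicate aspect and is precisely what \cite[\S 5]{Elizondo} carries out in detail.
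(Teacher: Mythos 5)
First, a structural point: the paper does not prove this statement at all. Theorem \ref{ElizondoH1} is imported verbatim from \cite[Theorems 5.3 \& 5.5]{Elizondo} (the surrounding text says explicitly that it summarizes the main results of that reference), so there is no internal proof to compare against, and your sketch must stand on its own. The easy parts of it do: quasi-triviality for $\mathcal{G}_{12}$, additivity of $H^1$ for the product tori of $\mathcal{G}_5$, $\mathcal{G}_{10}$, $\mathcal{G}_{11}$, and Shapiro's lemma combined with the norm-one sequence of Lemma \ref{H1Norm1} for $\mathcal{G}_2$, $\mathcal{G}_6$, $\mathcal{G}_8$, $\mathcal{G}_9$ all yield the tabulated answers, since the relevant extensions $\Bbbk'/\Bbbk_1$ there are quadratic or cubic cyclic and the identification of $\Bbbk_1^*/N((\Bbbk')^*)$ with a relative Brauer group applies.

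The gap is in the remaining cases. For $\mathcal{G}_3$ the torus of Proposition \ref{Qtori} is already an intersection, and the asserted answer $\mathrm{Br}({\Bbbk'}^{\langle s \rangle}/\Bbbk)$ involves a non-normal cubic subfield, so the phrase ``the intersection with the quasi-trivial piece contributes nothing'' is not an argument and Lemma \ref{H1Norm1} does not apply as stated. More seriously, for $\mathcal{G}_1$, $\mathcal{G}_4$, $\mathcal{G}_7$ the resolution $1 \to T \to Q \to T'' \to 1$ on which your whole computation rests is never constructed, and the natural candidate $Q = T_1 \times T_2$ for $T = T_1 \cap T_2$ is a product of Weil restrictions of norm-one tori, hence not quasi-trivial, so $H^1(\Gamma, Q)$ need not vanish and the long exact sequence does not collapse to a single cokernel as claimed; indeed a single cokernel would not naturally produce the direct sums of a quotient Brauer group with a $\mathrm{Br}_{\eta}$ term appearing in the table. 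Producing a genuine coflasque resolution (or splicing the Hochschild--Serre five-term sequences for $H = \mathcal{G} \cap \mathcal{G}_6$ and $\mathcal{G} \cap \mathcal{G}_3$) and matching the output with the precise combination of $\beta$, $\eta$ and relative Brauer groups is the entire content of the theorem, and you explicitly defer exactly that step to \cite[\S 5]{Elizondo}. As written, the proposal is a plausible plan for the hard cases rather than a proof of them.
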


\begin{example}
Let   $(\T=\Spec(\Bbbkb[M]), \tau)$ be a two-dimensional $\Bbbk$-torus such that the image of $\tilde{\tau}$  is conjugated to $\mathcal{G}_{12}$; for instance $\text{R}_{\C/\R}(\GC)$.  Then, $H^1(\mathcal{G}, \Aut^{\T}(\T)) =\{ 1 \}$. Therefore, if $(\T, \tau)$ acts on an affine normal variety $(X, \sigma)$, then the Altmann-Hausen presentation simplifies (see Lemma \ref{simpl}). 
\end{example}

\begin{example}  \label{ExDim2}
Let $(\T=\Spec(\Bbbk'[M]), \tau)$ be a two-dimensional $\Bbbk$-torus such that  the image $\mathcal{G}$ of $\tilde{\tau}$  is isomorphic to $\Gamma$. 
Let $(\omega_N, Y, \D, \sigma_Y, h)$ be a generalized AH-datum.  Let $\L:= \Bbbk'(Y)$ and let $\K := \L^{\Gamma}$. By Lemma \ref{Artin}, $\L/\K$ is a Galois extension of Galois group $\Gamma$. If $\K$ satisfies one of the conditions mentioned in \cite[Chapter X, \S 7]{SerreLocal}, and  if $\mathcal{G}$ is conjugated to $\mathcal{G}_i$ for $i \in \{ 3, 5, 9, 10, 11 \}$, then the Altmann-Hausen presentation simplifies (see Lemma \ref{simpl}), as in Example \ref{ExNorm1deg2'}.
\end{example}

\subsection{Galois-equivariant MMP} \label{Section2Torus2}

In this section, $\Bbbk'=\Bbbkb$ and $\Gamma=\Gal(\Bbbkb/\Bbbk)$.

A smooth projective $\Bbbk$-surface $(X, \sigma)$ is minimal if and only if $X$ admits no Galois stable set of pairwise disjoint $(-1)$-curves (see \cite[Theorem 21.8 p115]{ManinCubic}, see also \cite{KollarMori} for a point on birational geometry).

Recall that a smooth projective $\Bbbk$-surface $V$ is called a Del Pezzo surface if the dual of its canonical sheaf is ample. 
The degree of $V$ is the degree of $V_{\Bbbkb}$, that is the self-intersection number of any anti-canonical Weil divisor $-K_{V_{\Bbbkb}}$ on $V_{\Bbbkb}$. Recall  that $1 \leq d \leq 9$. 
Over an algebraically (or separably) closed field, Del Pezzo surfaces are classified, up to deformation, by their degree (see \cite[Theorems 24.3 and 24.4]{ManinCubic}). Up to deformation, the Del Pezzo surfaces are: $\Pkb^1 \times \Pkb^1$ (if $d=8$),  and; the blowup of $\Pkb^2$ at $r=9-d$ $\Bbbkb$-points in general position. For a classification over an arbitrary field, see \cite{ManinCubic}, \cite{DolgAlgGeom}.

\begin{remark}
For a complement on Del Pezzo surfaces, see for instance \cite{Wall}, or \cite{Serganova}. In  \cite{Russo}, there is a complete classification of real Del Pezzo surfaces. 
\end{remark}

Compactification of torsors is studied in \cite[Section 1, Theorem 1 and its Corollary]{Vos83}, and also in \cite[Theorem 3.9]{Elizondo}. In this article we need the following lemma.

\begin{lemma}  \label{Compactification} 
Let $(\T, \sigma)$ be a $(\T, \tau)$-torsor, where $\T=\Spec(\Bbbkb[M])$ and $M \cong \Z^2$. Then $(\T, \sigma)$ admits a smooth projective compactification that is a $(\T, \tau)$-toric Del Pezzo $\Bbbk$-surface containing $(\T, \sigma)$ as a $(\T, \tau)$-stable dense open subset.
\end{lemma}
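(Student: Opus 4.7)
The plan is to build the compactification as the descent of an explicit toric $\overline{\Bbbk}$-surface associated to a suitable $\Gamma$-stable fan in $N_\Q$, where $N$ is the cocharacter lattice of $\T$. Since $\hat\tau:\Gamma\to\Gl(N)\cong\Gl_2(\Z)$ is a continuous morphism to a discrete group, its image $\mathcal{G}$ is a finite subgroup of $\Gl_2(\Z)$. The first step is to observe from Proposition \ref{FiniteSubgroups} that every finite subgroup of $\Gl_2(\Z)$ is contained, possibly after conjugation in $\Gl_2(\Z)$, in at least one of the two maximal subgroups $\mathcal{G}_1=\langle x,s\rangle$ of order $12$ or $\mathcal{G}_2=\langle d,s\rangle$ of order $8$; this is read off directly by going through the twelve conjugacy classes listed there. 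After choosing the $\Z$-basis of $N$ accordingly, I may therefore assume $\mathcal{G}\subset\mathcal{G}_1$ or $\mathcal{G}\subset\mathcal{G}_2$.

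In either case I would exhibit a complete smooth $\mathcal{G}$-stable fan $\Lambda$ in $N_\Q$ whose associated toric $\overline{\Bbbk}$-surface $V_\Lambda$ is Del Pezzo. When $\mathcal{G}\subset\mathcal{G}_1$, I take the hexagonal fan on the rays $\pm(1,0),\pm(0,1),\pm(1,1)$, giving $V_\Lambda$ isomorphic to the Del Pezzo $\overline{\Bbbk}$-surface of degree $6$. When $\mathcal{G}\subset\mathcal{G}_2$, I take the fan on the rays $\pm(1,0),\pm(0,1)$, giving $V_\Lambda\cong\Pkb^1\times\Pkb^1$. Stability of each fan under the generators $x,s$ (respectively $d,s$) is an immediate matrix computation, and smoothness and completeness are visible from the pictures.

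Next I would promote $V_\Lambda$ into a compactification of the specific torsor $(\T,\sigma)$ rather than of the trivial one. The $\mathcal{G}$-stability of $\Lambda$ makes $\tau$ extend canonically to a $\Bbbk$-group structure $\tau_{V_\Lambda}$ on $V_\Lambda$ by standard toric descent for $\Gamma$-stable fans. By Remark \ref{RmkTorsors}, the torsor $(\T,\sigma)$ is encoded by a cocycle $\gamma\mapsto\varphi_\gamma$ with values in $\Aut^{\T}(\T)$, which is the group of translations by elements of $\T(\overline{\Bbbk})$; thus $\sigma_\gamma=\varphi_\gamma\circ\tau_\gamma$. Since each such translation extends to $V_\Lambda$ through the toric action, the formula $\sigma_{V_\Lambda,\gamma}:=\varphi_\gamma\circ\tau_{V_\Lambda,\gamma}$ defines a $\Bbbk$-structure on $V_\Lambda$ which restricts to $\sigma$ on the open orbit and with respect to which $(\T,\tau)$ acts, the cocycle identity for $\varphi$ being exactly the compatibility required.

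The resulting $\Bbbk$-variety $X:=(V_\Lambda,\sigma_{V_\Lambda})/\Gamma$ is then a smooth projective $(\T,\tau)$-toric surface containing $(\T,\sigma)$ as a dense open subset, and its Del Pezzo property over $\Bbbk$ follows from Del Pezzo-ness of $V_\Lambda$ over $\overline{\Bbbk}$ by faithfully flat descent of ampleness of the anticanonical sheaf. The main obstacle is the combinatorial dictionary establishing that every finite subgroup $\mathcal{G}\subset\Gl_2(\Z)$ sits inside $\mathcal{G}_1$ or $\mathcal{G}_2$ together with a Del Pezzo fan stabilised by $\mathcal{G}$; once this case distinction is in place, the rest reduces to standard toric descent combined with the cocycle description of torsors from Remark \ref{RmkTorsors}.
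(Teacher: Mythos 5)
Your proposal is correct and follows essentially the same route as the paper: identify the finite image of $\hat{\tau}$ in $\Gl_2(\Z)$, conjugate it into one of the two maximal subgroups $\mathcal{G}_1$ or $\mathcal{G}_2$, and compactify along the $\Gamma$-stable hexagonal fan (degree-$6$ Del Pezzo) or square fan ($\Pkb^1\times\Pkb^1$) respectively. The only cosmetic difference is that the paper invokes \cite[Proposition 1.19]{Huruguen} to extend the $\Bbbk$-structure of the torsor to the toric compactification, whereas you unpack this by hand via the translation cocycle of Remark \ref{RmkTorsors}; both are valid.
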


\begin{proof}
The $\Bbbk$-group structure $\tau$ induces a $\Gamma$-representation $\hat{\tau}$ on $\Gl(N)$.  The image of $\hat{\tau}$ is a finite subgroup of $\Gl_2(\Z)$.  

$\bullet$ \emph{Case 1: } The image of $\hat{\tau}$ is conjugated to a normal subgroup of  $\mathcal{G}_1 = \langle x, s \rangle$ (see Proposition \ref{FiniteSubgroups}). 
Let $\Lambda$ be the  fan of the projective toric variety $\text{Bl}_3(\Pkb^2)$ in $N_{\Q}$:  
\begin{center}
\begin{tikzpicture}[line cap=round,line join=round,>=triangle 45,x=0.7cm,y=0.7cm]
\begin{axis}[
x=1cm,y=01cm,
axis lines=middle,
ymajorgrids=true,
xmajorgrids=true,
xmin=-1.5,
xmax=1.5,
ymin=-1.25,
ymax=1.25,
xtick={-3.0,-2.0,...,3.0},
ytick={-3.0,-2.0,...,3.0},]
\clip(-3.,-3.) rectangle (3.,3.);
\draw [->,line width=1.pt] (0.,0.) -- (1.,0.);
\draw [->,line width=1.pt] (0.,0.) -- (0.,1.);
\draw [->,line width=1.pt] (0.,0.) -- (-1.,-1.);
\draw [->,line width=1.pt] (0.,0.) -- (1.,1.);
\draw [->,line width=1.pt] (0.,0.) -- (-1.,0.);
\draw [->,line width=1.pt] (0.,0.) -- (0.,-1.);
\begin{scriptsize}
\draw[color=black] (1.25,1.1) node {$v_1$};
\draw[color=black] (0.25,1.1) node {$v_2$};
\draw[color=black] (-1.2,0.2) node {$v_3$};
\draw[color=black] (-1.2,-1) node {$v_4$};
\draw[color=black] (0.25,-1.1) node {$v_5$};
\draw[color=black] (1.2,0.2) node {$v_6$};
\end{scriptsize}
\end{axis}
\end{tikzpicture}
\end{center}
This fan is $\Gamma$-stable (for $\hat{\tau}$), therefore, by \cite[Proposition 1.19]{Huruguen}, the $\Bbbk$-structure $\sigma$ on $\T$ extends to a $\Bbbk$-structure $\sigma_{\Lambda}$ on $X_{\Lambda} = \text{Bl}_3(\Pkb^2)$, the variety $(X_{\Lambda}, \sigma_{\Lambda})$ is a $(\T, \tau)$-toric $\Bbbk$-variety (a toric Del Pezzo $\Bbbk$-surface of degree 6), and we obtain a $(\T, \tau)$-equivariant open immersion $(\T, \sigma) \hookrightarrow (X_{\Lambda}, \sigma_{\Lambda})$. 

$\bullet$ \emph{Case 2: } The image of $\hat{\tau}$ is conjugated to a normal subgroup of  $\mathcal{G}_2 = \langle d, s \rangle$ (see Proposition \ref{FiniteSubgroups}). 
Let $\Lambda$ be the  fan of the projective toric variety $\Pkb^1 \times \Pkb^1$ in $N_{\Q}$:  
\begin{center}
\begin{tikzpicture}[line cap=round,line join=round,>=triangle 45,x=0.7cm,y=0.7cm]
\begin{axis}[
x=1cm,y=1cm,
axis lines=middle,
ymajorgrids=true,
xmajorgrids=true,
xmin=-1.5,
xmax=1.5,
ymin=-1.25,
ymax=1.25,
xtick={-3.0,-2.0,...,3.0},
ytick={-3.0,-2.0,...,3.0},]
\clip(-3.,-3.) rectangle (3.,3.);
\draw [->,line width=1.pt] (0.,0.) -- (1.,0.);
\draw [->,line width=1.pt] (0.,0.) -- (0.,1.);
\draw [->,line width=1.pt] (0.,0.) -- (-1.,0.);
\draw [->,line width=1.pt] (0.,0.) -- (0.,-1.);
\begin{scriptsize}
\draw[color=black] (1.2,0.2) node {$v_1$};
\draw[color=black] (0.25,1.1) node {$v_2$};
\draw[color=black] (-1.2,0.2) node {$v_3$};
\draw[color=black] (0.25,-1.1) node {$v_4$};
\end{scriptsize}
\end{axis}
\end{tikzpicture}
\end{center}
This fan is $\Gamma$-stable (for $\hat{\tau}$), therefore, by \cite[Proposition 1.19]{Huruguen}, the $\Bbbk$-structure $\sigma$ on $\T$ extends to a $\Bbbk$-structure $\sigma_{\Lambda}$ on $X_{\Lambda} = \Pkb^1 \times \Pkb^1 $, the variety $(X_{\Lambda}, \sigma_{\Lambda})$ is a $(\T, \tau)$-toric $\Bbbk$-variety  (a toric Del Pezzo $\Bbbk$-surface of degree 8),  and we obtain a $(\T, \tau)$-equivariant  open immersion $(\T, \sigma) \hookrightarrow (X_{\Lambda}, \sigma_{\Lambda})$. 
\end{proof}

Note that Lemma  \ref{Compactification} is still true for any field extension $\Bbbk'/\Bbbk$. We assume $\Bbbk'=\Bbbkb$ in view of applying an MMP.

\begin{proposition} \label{birgeom}
Let  $(\T, \tau)$ be a two-dimensional $\Bbbk$-torus and let $(\T, \sigma)$ be a  $(\T, \tau)$-torsor.    
Taking a smooth projective compactification, and then applying  a $(\T, \sigma)$-equivariant Minimal Model Program,  we obtain   a smooth toric Del Pezzo $\Bbbk$-surface $X$, birational to $(\T, \sigma)$,  that contains $(\T, \sigma)$  as a $(\T, \tau)$-stable dense open subset. More precisely, we have the following possibilities for $X$.
\begin{enumerate}[leftmargin=0.75cm, label=(\roman*)]
\item $X$ is a $\Bbbk$-form of $\Pkb^2$.  In this case, the image of $\hat{\tau}$ is conjugated to $\mathcal{G}_{4}$, $\mathcal{G}_{9}$, $\mathcal{G}_{12}$, or to $\langle id \rangle$.
\item $X$ is a $\Bbbk$-form of Picard rank one of the Del Pezzo $\Bbbkb$-surface of degree $6$. In this case, the image of $\hat{\tau}$ is conjugated to $\mathcal{G}_{1}$, $\mathcal{G}_{3}$, or to $\mathcal{G}_{7}$.
\item $X$ is a $\Bbbk$-form of Picard rank one of $\Pkb^1 \times \Pkb^1$. In this case, the image of $\hat{\tau}$ is conjugated to $\mathcal{G}_{2}$, $\mathcal{G}_{6}$, $\mathcal{G}_{8}$, or to $\mathcal{G}_{12}$. Or,  $X$ is a $\Bbbk$-form of Picard rank two of $\Pkb^1 \times \Pkb^1$ (if $(\T, \tau)$ is reducible). In this case, the image of $\hat{\tau}$ is conjugated to $\mathcal{G}_{5}$, $\mathcal{G}_{10}$, $\mathcal{G}_{11}$, or to $\langle id \rangle$.
\end{enumerate}
\end{proposition}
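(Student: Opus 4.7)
The plan is to build $X$ by first compactifying $(\T, \sigma)$ and then running a $(\T,\tau)$-equivariant Minimal Model Program (MMP). Lemma~\ref{Compactification} immediately supplies a smooth projective $(\T,\tau)$-toric Del Pezzo $\Bbbk$-surface $X_0$ containing $(\T,\sigma)$ as a $(\T,\tau)$-stable dense open subset: the fan is hexagonal (giving $X_0 \cong \text{Bl}_3(\Pkb^2)$) when the image of $\hat{\tau}$ is conjugated to a subgroup of $\mathcal{G}_1$, and the square one (giving $X_0 \cong \Pkb^1\times\Pkb^1$) when the image is conjugated to a subgroup of $\mathcal{G}_2$. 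Since every subgroup of $\Gl_2(\Z)$ is, up to conjugacy, contained in one of these two maximal subgroups (Proposition~\ref{FiniteSubgroups}), this disposes of all cases at once.

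I then apply a $(\T,\tau)$-equivariant MMP to $X_0$. On a smooth toric surface, every $(-1)$-curve of $X_{0,\Bbbkb}$ is torus-invariant and corresponds to a ray of the fan whose two neighbours still generate a smooth cone; contracting it amounts to deleting that ray. I iteratively locate a $\Gamma$-orbit of such pairwise disjoint $(-1)$-curves and contract it; by \cite[Proposition 1.19]{Huruguen} applied to the reduced fan, the result is again a smooth $(\T,\tau)$-toric Del Pezzo $\Bbbk$-surface containing $(\T,\sigma)$ as its open orbit. The process terminates at a minimal such surface $X$. Since the smooth toric Del Pezzo $\Bbbkb$-surfaces are $\Pkb^2$, $\Pkb^1\times\Pkb^1$, $\mathbb{F}_1$, $\text{Bl}_2(\Pkb^2)$, and $\text{Bl}_3(\Pkb^2)$, and since $\mathbb{F}_1$ and $\text{Bl}_2(\Pkb^2)$ always admit a further $(-1)$-contraction, the possible minimal outcomes are exactly $\Pkb^2$, $\Pkb^1\times\Pkb^1$, and $\text{Bl}_3(\Pkb^2)$, as claimed in (i)--(iii).

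To match each image of $\hat{\tau}$ to a minimal model, I analyze the $\Gamma$-action on the boundary of $X_0$. In the hexagonal case the six $(-1)$-curves split into the two triples $\{E_1,E_2,E_3\}$ and $\{\ell_{12},\ell_{13},\ell_{23}\}$ of pairwise disjoint curves whose contractions give $\Pkb^2$; the order-$6$ generator $x$ of $\mathcal{G}_1$ acts as a $6$-cycle on the hexagon and swaps the two triples, so a $\Gamma$-equivariant contraction to $\Pkb^2$ exists iff the image avoids $x$. Running through the subgroups of $\mathcal{G}_1$ in Proposition~\ref{FiniteSubgroups} yields case (ii) for $\mathcal{G}_1$, $\mathcal{G}_3$, $\mathcal{G}_7$ and case (i) for $\mathcal{G}_4$, $\mathcal{G}_9$, $\mathcal{G}_{12}$, and $\langle id\rangle$. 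In the square case, the analogous analysis on the two rulings of $\Pkb^1\times\Pkb^1$ distinguishes Picard rank $1$ (when $\hat{\tau}$ swaps the rulings: $\mathcal{G}_2$, $\mathcal{G}_6$, $\mathcal{G}_8$, $\mathcal{G}_{12}$) from Picard rank $2$ (when $\hat{\tau}$ preserves each ruling: $\mathcal{G}_5$, $\mathcal{G}_{10}$, $\mathcal{G}_{11}$, $\langle id\rangle$), the latter case corresponding via Proposition~\ref{Qtori} to a reducible $(\T,\tau)$.

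The main obstacle is the bookkeeping in this last step: for each conjugacy class of finite subgroups of $\Gl_2(\Z)$ one must read off which $(-1)$-curves (or rulings) form $\Gamma$-stable subsets, verify that the resulting minimal $\Bbbk$-form has the stated Picard rank, and check that every listed image actually arises for some torsor $(\T,\sigma)$. The appearance of $\mathcal{G}_{12}$ and $\langle id\rangle$ in two cases reflects the fact that distinct torsors under the same $(\T,\tau)$ can have non-isomorphic minimal compactifications, so the classification is genuinely by pairs $\big(\text{image of }\hat{\tau},\,[(\T,\sigma)]\big)$ rather than by $\hat{\tau}$ alone.
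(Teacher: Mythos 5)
Your overall architecture is the same as the paper's: compactify via Lemma~\ref{Compactification}, contract $\Gamma$-orbits of pairwise disjoint $(-1)$-curves (which are indeed the toric boundary curves), and observe that the only possible minimal outcomes are forms of $\Pkb^2$, $\Pkb^1\times\Pkb^1$ and $\text{Bl}_3(\Pkb^2)$. But the actual content of the proposition is the orbit analysis, and there your criterion is wrong. A $\Gamma$-equivariant contraction of the hexagon onto a form of $\Pkb^2$ exists if and only if the image of $\hat{\tau}$ preserves each of the two alternating triples $\{D_1,D_3,D_5\}$, $\{D_2,D_4,D_6\}$, i.e. lies in the vertex-reflection copy of $\mathscr{S}_3$ inside the symmetry group of the hexagonal fan; this is \emph{not} the same as ``avoiding $x$''. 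The elements swapping the two triples are the odd powers of $x$ (including $-id=x^3$) \emph{and} the edge-midpoint reflections $-s$, $-x^2s$, $-x^4s$. Thus $\langle x^2,-s\rangle$ contains no order-$6$ element yet makes the six curves a single orbit (no contraction to $\Pkb^2$ at all), and $\langle -id\rangle$ also swaps the triples, its hexagonal minimal model being a rank-two form of $\Pkb^1\times\Pkb^1$, not $\Pkb^2$. Concretely, with the fan of Lemma~\ref{Compactification} one checks that $s$ fixes $v_1,v_4$ and preserves the triples while $-s$ interchanges them, so the dichotomy between $\langle x^2,s\rangle$ and $\langle x^2,-s\rangle$ must be decided by this explicit orbit computation (the paper's proof does exactly this, ray by ray, finding two orbits for $\langle x^2,s\rangle$ and one for $\langle x^2,-s\rangle$ --- note this is the opposite matching of these two groups to cases (i)/(ii) from the one you assert, so your ``running through the subgroups'' cannot be taken on faith). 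You also omit from the hexagonal analysis the subgroups $\langle s,-s\rangle$ and $\langle -id\rangle$, whose equivariant contractions land on forms of $\Pkb^1\times\Pkb^1$ of Picard rank one and two respectively, and the $\langle s\rangle$ case, where different admissible choices of orbit to contract end on non-isomorphic minimal models; this picture-by-picture bookkeeping is precisely the proof in the paper and is missing from your argument. (Your square-case analysis of which subgroups of $\mathcal{G}_2$ swap the rulings is correct.)

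Your closing explanation of why $\mathcal{G}_{12}$ and $\langle id\rangle$ appear in two cases is also not the right phenomenon: for these images $\H^1_{cont}(\Gamma,\Aut^{\T}(\T))$ is trivial (Theorem~\ref{ElizondoH1}, resp. Hilbert 90), so there is only one torsor. The double occurrence reflects the non-uniqueness of the $(\T,\tau)$-equivariant MMP for a \emph{fixed} torsor: under $\langle s\rangle$ the six $(-1)$-curves form four orbits and one may contract them in different orders, ending either on a form of $\Pkb^2$ or on a rank-one form of $\Pkb^1\times\Pkb^1$, exactly as in the corresponding bullet of the paper's proof. So the classification is by the chosen run of the MMP, not by the isomorphism class of the torsor.
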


\begin{proof} By Lemma \ref{Compactification}, a compactification of $(\T, \sigma)$ is a $(\T, \tau)$-toric Del Pezzo surface $(X_{\Lambda}, \sigma_{\Lambda})$. We can assume that $X_{\Lambda} = \text{Bl}_3(\Pkb^2)$  or  $X_{\Lambda} = \Pkb^1 \times \Pkb^1$ (see the Proof of Lemma \ref{Compactification}). Recall that to a ray $v_i$ of a fan of a toric variety corresponds  a toric divisor $D_i$. 
The toric divisors associated to $X_{\Lambda} = \text{Bl}_3(\Pkb^2)$ and to $X_{\Lambda} = \Pkb^1 \times \Pkb^1$ are respectively:
\begin{multicols}{2}
\begin{tikzpicture}[line cap=round,line join=round,>=triangle 45,x=0.7cm,y=0.7cm]
\clip(-2,-1.25) rectangle (2,1.25);
\draw [-,line width=1.pt] (1.,0.) -- (0.5,0.866);
\draw [-,line width=1.pt] (0.5,0.866) -- (-0.5,0.866);
\draw [-,line width=1.pt] (-0.5,0.866) -- (-1.,0.);
\draw [-,line width=1.pt] (-1.,0.) -- (-0.5,-0.866);
\draw [-,line width=1.pt] (-0.5,-0.866) -- (0.5,-0.866);
\draw [-,line width=1.pt] (0.5,-0.866) -- (1.,0.);
\begin{scriptsize}
\draw[color=black] (1.1,0.5) node {$D_1$};
\draw[color=black] (0.2,1.1) node {$D_2$};
\draw[color=black] (-1.1,0.5) node {$D_3$};
\draw[color=black] (-1.1,-0.5) node {$D_4$};
\draw[color=black] (0.2,-1.1) node {$D_5$};
\draw[color=black] (1.1,-0.5) node {$D_6$};
\end{scriptsize}
\end{tikzpicture}

\begin{tikzpicture}[line cap=round,line join=round,>=triangle 45,x=0.7cm,y=0.7cm]
\clip(-2,-1.25) rectangle (2,1.25);
\draw [-,line width=1.pt] (1.,-1.) -- (1.,1.);
\draw [-,line width=1.pt] (1.,1.) -- (-1.,1.);
\draw [-,line width=1.pt] (-1.,1.) -- (-1.,-1.);
\draw [-,line width=1.pt] (-1.,-1.) -- (1.,-1.);
\begin{scriptsize}
\draw[color=black] (1.3,0) node {$D_1$};
\draw[color=black] (0,0.8) node {$D_2$};
\draw[color=black] (-1.3,0) node {$D_3$};
\draw[color=black] (0,-0.8) node {$D_4$};
\end{scriptsize}
\end{tikzpicture}
\end{multicols}
By construction, $\T = X_{\Lambda} \backslash \cup D_i$ and $(\T, \sigma) = (X_{\Lambda} \backslash \cup D_i, \sigma_{\Lambda})$ \cite[Theorem 30.3.1 p166]{ManinCubic}  \cite[Theorem 3.10 p108]{ManinRat1}.

Recall that $\Pkb^2$ has Picard rank one. Therefore, a $\Bbbk$-form of $\Pkp^2$ has also Picard rank one. 
Then, recall that $\text{Bl}_3(\Pkb^2)$ has Picard rank four and that the Picard group is generated by three disjoint toric divisors and a line that  does not intersect any of the six toric divisors. Therefore, a $\Bbbk$-form of $ \text{Bl}_3(\Pkb^2)$  has   Picard rank between one and four.
Finally, recall that $\Pkb^1 \times \Pkb^1$ has Picard rank two and that the Picard group is generated by two intersecting toric divisors. Therefore,  a $\Bbbk$-form of $\Pkb^1 \times \Pkb^1$  has   Picard rank one or two. 

We apply a  $(\T, \tau)$-equivariant MMP to the $\Bbbk$-surface $(X_{\Lambda}, \sigma_{\Lambda})$. This algorithm consists of contracting (-1)-curves in a way compatible with the $\Gamma$-action. 

Since the case where $X_{\Lambda} =  \Pkb^1 \times \Pkb^1$ is similar to the case where $X_{\Lambda} =  \text{Bl}_3(\Pkb^2)$, we only focus on the second one. So, let  $X_{\Lambda} =  \text{Bl}_3(\Pkb^2)$, it has six   (-1)-curves. 

$\bullet$ If the image of $\hat{\tau}$ is conjugated to $\langle x, s \rangle$,   $\langle x^2, -s \rangle$,  or to $\langle x \rangle$ (the acting torus is irreducible),   the six (-1)-curves form a unique $\Gamma$-orbit, hence we obtain a minimal surface that is a $\Bbbk$-form of $\text{Bl}_3(\Pkb^2)$ of Picard rank one. 
\item If the image of $\hat{\tau}$ is conjugated to $\langle x^2, s \rangle$ or to  $\langle x^2  \rangle$ (the acting torus is irreducible), the six (-1)-curves form two $\Gamma$-orbits. We can contract one of these two orbits and we obtain a minimal surface that is a $\Bbbk$-form of $\Pkb^2$ (of Picard rank one)  where the (1)-curves form a single $\Gamma$-orbits. 
\begin{multicols}{2}
\begin{tikzpicture}[line cap=round,line join=round,>=triangle 45,x=0.7cm,y=0.7cm]
\clip(-2,-1.25) rectangle (2,1.25);
\draw [-,line width=1.pt, color=red] (1.,0.) -- (0.5,0.866);
\draw [-,line width=1.pt, color=blue] (0.5,0.866) -- (-0.5,0.866);
\draw [-,line width=1.pt, color=red] (-0.5,0.866) -- (-1.,0.);
\draw [-,line width=1.pt, color=blue] (-1.,0.) -- (-0.5,-0.866);
\draw [-,line width=1.pt, color=red] (-0.5,-0.866) -- (0.5,-0.866);
\draw [-,line width=1.pt, color=blue] (0.5,-0.866) -- (1.,0.);
\begin{scriptsize}
\draw[color=red] (1.1,0.5) node {$D_1$};
\draw[color=blue] (0.2,1.1) node {$D_2$};
\draw[color=red] (-1.1,0.5) node {$D_3$};
\draw[color=blue] (-1.1,-0.5) node {$D_4$};
\draw[color=red] (0.2,-1.1) node {$D_5$};
\draw[color=blue] (1.1,-0.5) node {$D_6$};
\end{scriptsize}
\end{tikzpicture}

\begin{tikzpicture}[line cap=round,line join=round,>=triangle 45,x=0.7cm,y=0.7cm]
\clip(-2,-1.25) rectangle (2,1.25);
\draw [-,line width=1.pt, color=red] (1.,0.) -- (-0.5,0.866);
\draw [-,line width=1.pt, color=red] (-0.5,0.866) -- (-0.5,-0.866);
\draw [-,line width=1.pt, color=red] (-0.5,-0.866) -- (1.,0.);
\filldraw[blue] (1,0) circle (2pt) node[anchor=west]{\small $D'_6$};
\filldraw[blue] (-0.5,0.866) circle (2pt) node[anchor=east]{\small $D'_2$};
\filldraw[blue] (-0.5,-0.866) circle (2pt) node[anchor=east]{\small $D'_2$};
\begin{scriptsize}
\draw[color=red] (0.5,0.6) node {$D'_1$};
\draw[color=red] (-0.9,0) node {$D'_3$};
\draw[color=red] (0.5,-0.6) node {$D'_5$};
\end{scriptsize}
\end{tikzpicture}
\end{multicols}
$\bullet$ If the image of $\hat{\tau}$  is conjugated to $\langle s, -s \rangle$ (the acting torus is irreducible), the six (-1)-curves form  two   $\Gamma$-orbits. We can contract the orbit  consisting of two (-1)-curves and we obtain a minimal surface that is a $\Bbbk$-form of $\Pkb^1 \times \Pkb^1$ where the (0)-curves form a single $\Gamma$-orbit, hence it is of  Picard rank one (the Picard group is generated by $D'_2 + D'_6 + D'_5 + D'_3$).
\begin{multicols}{2}
\begin{tikzpicture}[line cap=round,line join=round,>=triangle 45,x=0.7cm,y=0.7cm]
\clip(-2,-1.25) rectangle (2,1.25);
\draw [-,line width=1.pt, color=red] (1.,0.) -- (0.5,0.866);
\draw [-,line width=1.pt, color=blue] (0.5,0.866) -- (-0.5,0.866);
\draw [-,line width=1.pt, color=blue] (-0.5,0.866) -- (-1.,0.);
\draw [-,line width=1.pt, color=red] (-1.,0.) -- (-0.5,-0.866);
\draw [-,line width=1.pt, color=blue] (-0.5,-0.866) -- (0.5,-0.866);
\draw [-,line width=1.pt, color=blue] (0.5,-0.866) -- (1.,0.);
\begin{scriptsize}
\draw[color=red] (1.1,0.5) node {$D_1$};
\draw[color=blue] (0.2,1.1) node {$D_2$};
\draw[color=blue] (-1.1,0.5) node {$D_3$};
\draw[color=red] (-1.1,-0.5) node {$D_4$};
\draw[color=blue] (0.2,-1.1) node {$D_5$};
\draw[color=blue] (1.1,-0.5) node {$D_6$};
\end{scriptsize}
\end{tikzpicture}

\begin{tikzpicture}[line cap=round,line join=round,>=triangle 45,x=0.7cm,y=0.7cm]
\clip(-2.,-1.25) rectangle (2.,1.25);
\draw [-,line width=1.pt, color=blue] (1.,-1.) -- (1.,1.);
\draw [-,line width=1.pt, color=blue] (1.,1.) -- (-1.,1.);
\draw [-,line width=1.pt, color=blue] (-1.,1.) -- (-1.,-1.);
\draw [-,line width=1.pt, color=blue] (-1.,-1.) -- (1.,-1.);
\filldraw[red] (1,1) circle (2pt) node[anchor=west]{\small $D'_1$};
\filldraw[red] (-1,-1) circle (2pt) node[anchor=east]{\small $D'_4$};
\begin{scriptsize}
\draw[color=blue] (1.3,0) node {$D'_6$};
\draw[color=blue] (0.,0.8) node {$D'_2$};
\draw[color=blue] (-1.3,0.) node {$D'_3$};
\draw[color=blue] (0.2,-0.8) node {$D'_5$};
\end{scriptsize}
\end{tikzpicture}
\end{multicols}
$\bullet$ If the image of $\hat{\tau}$  is conjugated to $\langle x^3 \rangle$ (the acting torus is reducible), the six (-1)-curves form  three   $\Gamma$-orbits.   We can contract the orbit  consisting of two (-1)-curves and we obtain a minimal surface that is a $\Bbbk$-form of $\Pkb^1 \times \Pkb^1$ of Picard rank two where the (0)-curves have two $\Gamma$-orbits (the Picard group is generated by $D'_1 + D'_5$ and by $ D'_6 + D'_3$).  
\begin{multicols}{2}
\begin{tikzpicture}[line cap=round,line join=round,>=triangle 45,x=0.7cm,y=0.7cm]
\clip(-2,-1.25) rectangle (2,1.25);
\draw [-,line width=1.pt, color=red] (1.,0.) -- (0.5,0.866);
\draw [-,line width=1.pt, color=green] (0.5,0.866) -- (-0.5,0.866);
\draw [-,line width=1.pt, color=blue] (-0.5,0.866) -- (-1.,0.);
\draw [-,line width=1.pt, color=red] (-1.,0.) -- (-0.5,-0.866);
\draw [-,line width=1.pt, color=green] (-0.5,-0.866) -- (0.5,-0.866);
\draw [-,line width=1.pt, color=blue] (0.5,-0.866) -- (1.,0.);
\begin{scriptsize}
\draw[color=red] (1.1,0.5) node {$D_1$};
\draw[color=green] (0.2,1.1) node {$D_2$};
\draw[color=blue] (-1.1,0.5) node {$D_3$};
\draw[color=red] (-1.1,-0.5) node {$D_4$};
\draw[color=green] (0.2,-1.1) node {$D_5$};
\draw[color=blue] (1.1,-0.5) node {$D_6$};
\end{scriptsize}
\end{tikzpicture}

\begin{tikzpicture}[line cap=round,line join=round,>=triangle 45,x=0.7cm,y=0.7cm]
\clip(-2.,-1.25) rectangle (2.,1.25);
\draw [-,line width=1.pt, color=blue] (1.,-1.) -- (1.,1.);
\draw [-,line width=1.pt, color=red] (1.,1.) -- (-1.,1.);
\draw [-,line width=1.pt, color=blue] (-1.,1.) -- (-1.,-1.);
\draw [-,line width=1.pt, color=red] (-1.,-1.) -- (1.,-1.);
\filldraw[green] (-1,1) circle (2pt) node[anchor=east]{\small $D'_2$};
\filldraw[green] (1,-1) circle (2pt) node[anchor=west]{\small $D'_4$};
\begin{scriptsize}
\draw[color=blue] (1.3,0) node {$D'_6$};
\draw[color=red] (0.,0.8) node {$D'_1$};
\draw[color=blue] (-1.3,0.) node {$D'_3$};
\draw[color=red] (0.2,-0.8) node {$D'_5$};
\end{scriptsize}
\end{tikzpicture}
\end{multicols}
$\bullet$ If the image of $\hat{\tau}$  is conjugated to $\langle s \rangle$ (the acting torus is irreducible), the six (-1)-curves form  four   $\Gamma$-orbits. We can contract an orbit   consisting of two (-1)-curves, then we contract the  invariant (-1)-curve and we obtain the minimal surface that is a $\Bbbk$-form of $\Pkb^2$ (of Picard rank one)  where the (1)-curves have two $\Gamma$-orbits.
The other possibility is to contract a $\Gamma$-invariant (-1)-curve. Then we can contract the pair of (-1)-curves or the invariant (-1)-curve. We obtain respectively a minimal surface that is a $\Bbbk$-form of $\Pkb^1 \times \Pkb^1$ of Picard rank one  (the Picard group is generated by $D'_3 + D'_5$) where the (0)-curves have two $\Gamma$-orbits, or a minimal surface that is a $\Bbbk$-form of $\Pkb^2$  where the (1)-curves have two $\Gamma$-orbits.
\begin{multicols}{3}
\begin{tikzpicture}[line cap=round,line join=round,>=triangle 45,x=0.7cm,y=0.7cm]
\clip(-2,-1.25) rectangle (2,1.25);
\draw [-,line width=1.pt, color=black] (1.,0.) -- (0.5,0.866);
\draw [-,line width=1.pt, color=blue] (0.5,0.866) -- (-0.5,0.866);
\draw [-,line width=1.pt, color=green] (-0.5,0.866) -- (-1.,0.);
\draw [-,line width=1.pt, color=red] (-1.,0.) -- (-0.5,-0.866);
\draw [-,line width=1.pt, color=green] (-0.5,-0.866) -- (0.5,-0.866);
\draw [-,line width=1.pt, color=blue] (0.5,-0.866) -- (1.,0.);
\begin{scriptsize}
\draw[color=black] (1.1,0.5) node {$D_1$};
\draw[color=blue] (0.2,1.1) node {$D_2$};
\draw[color=green] (-1.1,0.5) node {$D_3$};
\draw[color=red] (-1.1,-0.5) node {$D_4$};
\draw[color=green] (0.2,-1.1) node {$D_5$};
\draw[color=blue] (1.1,-0.5) node {$D_6$};
\end{scriptsize}
\end{tikzpicture}

\begin{tikzpicture}[line cap=round,line join=round,>=triangle 45,x=0.7cm,y=0.7cm]
\clip(-2.,-1.25) rectangle (2.,1.25);
\draw [-,line width=1.pt, color=blue] (1.,-1.) -- (1.,1.);
\draw [-,line width=1.pt, color=blue] (1.,1.) -- (-1.,1.);
\draw [-,line width=1.pt, color=green] (-1.,1.) -- (-1.,-1.);
\draw [-,line width=1.pt, color=green] (-1.,-1.) -- (1.,-1.);
\filldraw[black] (1,1) circle (2pt) node[anchor=west]{\small $D'_1$};
\filldraw[red] (-1,-1) circle (2pt) node[anchor=east]{\small $D'_4$};
\begin{scriptsize}
\draw[color=blue] (1.3,0) node {$D'_6$};
\draw[color=blue] (0.,0.8) node {$D'_2$};
\draw[color=green] (-1.3,0.) node {$D'_3$};
\draw[color=green] (0.2,-0.8) node {$D'_5$};
\end{scriptsize}
\end{tikzpicture}

\begin{tikzpicture}[line cap=round,line join=round,>=triangle 45,x=0.7cm,y=0.7cm]
\clip(-2,-1.25) rectangle (2,1.25);
\draw [-,line width=1.pt, color=blue] (1.,0.) -- (-0.5,0.866);
\draw [-,line width=1.pt, color=red] (-0.5,0.866) -- (-0.5,-0.866);
\draw [-,line width=1.pt, color=blue] (-0.5,-0.866) -- (1.,0.);
\filldraw[black] (1,0) circle (2pt) node[anchor=west]{\small $D'_1$};
\filldraw[green] (-0.5,0.866) circle (2pt) node[anchor=east]{\small $D'_3$};
\filldraw[green] (-0.5,-0.866) circle (2pt) node[anchor=east]{\small $D'_5$};
\begin{scriptsize}
\draw[color=blue] (0.5,0.6) node {$D'_2$};
\draw[color=red] (-0.9,0) node {$D'_4$};
\draw[color=blue] (0.5,-0.6) node {$D'_6$};
\end{scriptsize}
\end{tikzpicture}
\end{multicols}
$\bullet$ If the image of $\hat{\tau}$  is conjugated to $\langle id \rangle$ (the acting torus is reducible),   the six (-1)-curves form six   $\Gamma$-orbit, hence we can contract and we obtain a minimal surface that is a	 $\Bbbk$-form of $\Pkb^2$ (of Picard rank one) where the (1)-curves have three $\Gamma$-orbits. 
\end{proof}

\begin{example}[{Another proof of $H^1( \mathcal{G}_{12}, \Aut^{\T}(\T)) = \{ 1 \}$}] \label{ExampleH1G12} 
Let $(\T, \tau)$ be a $\Bbbk$-torus such that the image of the $\Gamma$-representation $\hat{\tau}$ on $N$ is $\mathcal{G}_{12} = \langle s \rangle$. For instance $\Bbbkb=\C$ and $\Bbbk=\R$ and $(\T, \tau)$ is the Weil Restriction $\R$-torus. Note that, by \cite[Proposition 3.7]{Elizondo}, we can assume that $\T=\Spec(\Bbbk'[M])$, where $\Bbbk'/\Bbbk$ is a finite Galois extension of Galois group isomorphic to $\mathcal{G}_{12}$. Let $(\T, \sigma)$ be a $(\T, \tau)$-torsor. We will prove that this torsor is trivial.  The fan of the toric Del Pezzo surface $\Pkp^2$ is Galois stable, therefore, by \cite[Proposition 1.19]{Huruguen}, $(\T, \sigma)$ is a dense $(\T, \tau)$-stable open subset of a Severi Brauer $\Bbbk$-surface $(\Pkp^2, \sigma')$. The Galois action on the fan exchange two rays and fix the other one; therefore the Galois action on the toric divisors exchanges two toric divisors and fix a point $P$. Thus, $(\Pkp^2, \sigma')$ contains a $\Bbbk$-point $P$. A Severi Brauer suface with a $\Bbbk$-point is isomorphic to $\Pk^2$. Therefore, $(\T, \sigma)$ is a trivial $(\T, \tau)$-torsor since it contains $\Bbbk$-points, and $H^1( \mathcal{G}_{12}, \Aut^{\T}(\T)) = \{ 1 \}$.
\begin{multicols}{2}
\begin{tikzpicture}[line cap=round,line join=round,>=triangle 45,x=0.7cm,y=0.7cm]
\begin{axis}[
x=0.75cm,y=0.75cm,
axis lines=middle,
ymajorgrids=true,
xmajorgrids=true,
xmin=-1.5,
xmax=1.5,
ymin=-1.5,
ymax=1.5,
xtick={-3.0,-2.0,...,3.0},
ytick={-3.0,-2.0,...,3.0},]
\clip(-3.,-3.) rectangle (3.,3.);
\draw [->,line width=1.pt, color=blue] (0.,0.) -- (1.,0.);
\draw [->,line width=1.pt, color=blue] (0.,0.) -- (0.,1.);
\draw [->,line width=1.pt, color=red] (0.,0.) -- (-1.,-1.);
\end{axis}
\end{tikzpicture}

\begin{tikzpicture}[line cap=round,line join=round,>=triangle 45,x=0.7cm,y=0.7cm]
\clip(-2,-1.25) rectangle (2,1.25);
\draw [-,line width=1.pt, color=blue] (1.,0.) -- (-0.5,0.866);
\draw [-,line width=1.pt, color=red] (-0.5,0.866) -- (-0.5,-0.866);
\draw [-,line width=1.pt, color=blue] (-0.5,-0.866) -- (1.,0.);
\filldraw[black] (1,0) circle (2pt) node[anchor=west]{\small $P$};
\end{tikzpicture}
\end{multicols}
\end{example}

\begin{example}
Let $\Bbbk=\R$, and let $\Bbbk'= \C$. 
\begin{enumerate}[leftmargin=0.75cm, label=(\roman*)]
\item Consider the trivial $\s^1 \times \s^1$-torsor. Then, it is an $\s^1 \times \s^1$-stable dense open subset of $\PR^1 \times \PR^1 \cong \Proj(\R[x,y,z,t]/(x^2+y^2-z^2-t^2))$  (by Segre embedding).
\item Consider the non trivial $\s^1 \times \s^1$-torsor $\Spec(\R[x,y]/(x^2+y^2+1))^{2}$. Then, it is an $\s^1 \times \s^1$-stable dense open subset of $\Proj(\R[x,y,z,t]/(x^2+y^2+z^2+t^2))$, which is an $\R$-form of  $\PR^1 \times \PR^1$ with no $\R$-points. 
\item Consider the trivial $\RC(\GC)$-torsor. Then, it is an $\RC(\GC)$-stable dense open subset of $\RC(\PC^1) \cong \Proj(\R[x,y,z,t]/(x^2-y^2-z^2-t^2))$, which is an $\R$-form of  $\PR^1 \times \PR^1$ with $\R$-points.
\end{enumerate} 
\end{example}

\begin{example} 
Let $\Bbbk$ be a field, and let $\Bbbk'/\Bbbk$ be a cubic Galois extension of Galois group $\{id, \gamma, \gamma^2\}$.
Then, a $\text{R}^{(1)}_{\Bbbk'/\Bbbk}(\Gkp)$-torsor $X_{\alpha}$ corresponds to the $\Bbbk$-structure $\sigma$ on $\Gkp^2$ defined by:
$$\sigma_{\gamma}^{\sharp} \left(a_{(k,l)}\chi^{(k,l)} \right) = \gamma(a_{(k,l)}) h_{\gamma} \left(\tilde{\tau}_{\gamma}(k,l) \right) \chi^{\tilde{\tau}_{\gamma}(k,l)}= \gamma(a_{(k,l)}) \alpha^{-l} \chi^{(-l,k-l)},$$
where $\alpha \in \Bbbk^*$. If $\alpha \nsubseteq \mathrm{Im}(N_{\Bbbk'/\Bbbk}(\Bbbk))$, then $X_{\alpha}$ is  a non-trivial  $\text{R}^{(1)}_{\Bbbk'/\Bbbk}(\Gkp)$-torsor (see Lemma \ref{H1Norm1}), which is a  $\text{R}^{(1)}_{\Bbbk'/\Bbbk}(\Gkp)$-stable dense open subset of a $\Bbbk$-form of $\Pkp^2$ with no $\Bbbk$-points.
\end{example}

\begin{example}
Let $\Bbbk'/\Bbbk$ be a Galois extension with Galois group $\Gamma  \cong \mathscr{D}_{12} \cong \langle x , s \rangle$. Let $(\Gkp^2, \tau)$ be the $\Bbbk$-torus such that $\hat{\tau}_{x}= x$ and $\hat{\tau}_{s}=s$. 
Consider the surface $X$ of $\Pkp^2 \times \Pkp^2$:
$$X:= \left\{ \left([x_1:x_2:x_3],[y_1:y_2:y_3]\right) \ | \ x_i, y_j \in \Bbbk', \ x_iy_i = x_jy_j, \ i \in \{1,2,3\} \right\}.$$ 
It is the graph of the standard quadratic birational Cremona transformation. This defines a Del Pezzo $\Bbbk'$-surface of degree six. Recall that we have an isomorphism $\mathscr{D}_{12} \cong \mathscr{S}_3 \times \mathscr{C}_2$. 
Let $\sigma : \mathscr{S}_3 \times \mathscr{C}_2 \to \Aut(X_{\Bbbk})$ be the $\Bbbk$-structure on $X$ defined as follow. The group $\mathscr{S}_3$ acts on $\Pkp^2 \times \Pkp^2$ by permuting separately the elements of the pair of triples, and by the Galois action on the constants. The group $\mathscr{C}_2$ acts on $\Pkp^2 \times \Pkp^2$ by exchanging the pair of triples, and by the Galois action on the constants. This defines a $\Bbbk$-structure on $X$ since the equations of $X$ are invariant. Then, one can shows that the $\Bbbk$-torus $(\Gkp^2, \tau)$ acts on $(X, \sigma)$, therefore  $(X, \sigma)$ is a toric Del pezzo $\Bbbk$-surface. Note that the open orbit is a trivial $(\Gkp^2, \tau)$-torsor since it contains an invariant point $([1:1:1],[1:1:1])$. 
\end{example}

\bigskip

\appendix
\addappheadtotoc 

\addtocontents{toc}{\protect\setcounter{tocdepth}{0}}

\section{$\Bbbk$-structures} \label{Appendix1}

In this section, we explain the link between infinite and finite Galois descent. Let $\Bbbk'/\Bbbk$ be a Galois extension of Galois group $\Gamma$. In Lemmas \ref{LinkFinite} and \ref{ForAll}, we explain how are related $\Bbbk$-structures on a $\Bbbk'$-variety $X$ and Galois subextensions $\Bbbk \subset \Bbbk_1 \subset \Bbbk'$. Let $\gamma \in \Gamma$. Since $\Bbbk_1$ is a normal subextension in $\Bbbk'$, $\gamma(\Bbbk_1)=\Bbbk_1$. We get an automorphism $\left. \gamma \right|_{\Bbbk_1} : \Bbbk_1 \to \Bbbk_1$ (\cite[Lemmas 09HQ and 0BME]{Stack}). Therefore, we obtain a continuous surjective homomorphism $\Gamma \to \Gal(\Bbbk_1/\Bbbk), \gamma \mapsto \left. \gamma \right|_{\Bbbk_1}$, with kernel $\Gal(\Bbbk'/\Bbbk_1)$ (\cite[Lemmas 0BMK, 0BMM and Theorem 0BML]{Stack}). In other words, we obtain a short exact sequence 
$$1 \longrightarrow  \Gal \left( \Bbbk'/\Bbbk_1 \right) \longrightarrow \Gamma := \Gal \left( \Bbbk'/\Bbbk \right) \longrightarrow \Gal \left( \Bbbk_1/\Bbbk \right) \longrightarrow 1$$
of profinite topological groups. Hence, $\Gal(\Bbbk'/\Bbbk_1)$ is a  normal subgroup of $\Gamma$ and we have an isomorphism $\Gamma /\Gal(\Bbbk'/\Bbbk_1) \to \Gal(\Bbbk_1/\Bbbk), \gamma \Gal(\Bbbk'/\Bbbk_1) \mapsto \left. \gamma \right|_{\Bbbk_1}$.
 
\begin{lemma} \label{LinkFinite}
Let $X$ be a  $\Bbbk'$-variety endowed with a $\Bbbk$-structure $\sigma$. Let $\Bbbk_1/\Bbbk$ be a  finite Galois extension and let $X_1$ be a $\Bbbk_1$-form of $X$  as in Definition \ref{defstructure}. There exists a $\Bbbk$-structure $\sigma_1 : \Gal(\Bbbk_1/\Bbbk)   \to \Aut({X_1}_{/\Bbbk})$   such that the following diagram commutes for all $\gamma \in \Gamma$:
\begin{center}
\begin{tikzpicture}
  \matrix (m) [matrix of math nodes,row sep=1.5em,column sep=2.25em,minimum width=2em]
  {
 X_1 \times_{\Spec \left( \Bbbk_1 \right)} \Spec \left( \Bbbk' \right) & X \\
 X_1 \times_{\Spec \left( \Bbbk_1 \right)} \Spec \left( \Bbbk' \right) & X  \\};
  \path[-stealth]
    (m-1-1) edge node [above] {\small $\cong    $} (m-1-2)
		        edge node [left] {\small ${\sigma_1}_{\left. \gamma \right|_{\Bbbk_1}} \times \Spec(\gamma)$} (m-2-1)
		(m-2-1) edge node [above] {\small $\cong    $} (m-2-2)
    (m-1-2) edge node [right] {\small ${\sigma}_{\gamma}$} (m-2-2);
\end{tikzpicture}
\end{center}
Furthermore, if $X$ is quasi-projective (resp. affine), there exists a $\Bbbk$-form $X_0$ of $X$ in the category of quasi-projective (resp. affine) varieties, and a $\Gamma$-equivariant isomorphism $(X_0 )_{\Bbbk'} \cong X$, where the $\Bbbk$-structure on $X$ is $\sigma$ and the $\Bbbk$-structure on $(X_0)_{\Bbbk'}$ is given by $\gamma \in \Gamma \mapsto id \times \Spec(\gamma)$. 
\end{lemma}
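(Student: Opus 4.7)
The plan is to descend the $\Bbbk$-structure $\sigma$ along the Galois cover $\Spec(\Bbbk') \to \Spec(\Bbbk_1)$, one $\gamma \in \Gamma$ at a time, exploiting the hypothesis that the restriction $\sigma|_{H}$ to the normal subgroup $H := \Gal(\Bbbk'/\Bbbk_1)$ coincides with the canonical $H$-action on $(X_1)_{\Bbbk'}$. The key point is that although $\sigma|_H$ already becomes trivial on $X_1$, a general $\sigma_\gamma$ covers the non-trivial base automorphism $\Spec(\gamma)$ and interacts with the $H$-action through conjugation, which remains inside $H$ by normality; this is exactly what fpqc (or, equivalently here, finite \'etale) descent requires.

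First, for each $\gamma \in \Gamma$ with image $\bar\gamma \in \Gal(\Bbbk_1/\Bbbk)$, I would build a morphism $(\sigma_1)_{\bar\gamma}: X_1 \to X_1$ of $\Bbbk$-schemes covering $\Spec(\bar\gamma)$ whose base change by $\Spec(\gamma)$ over $\Spec(\bar\gamma)$ recovers $\sigma_\gamma$ under the identification $X \cong (X_1)_{\Bbbk'}$. By descent along the Galois cover $\Spec(\Bbbk') \to \Spec(\Bbbk_1)$, such a descent exists and is unique provided that $\sigma_\gamma$ intertwines the natural $H$-actions in the twisted sense: for every $h \in H$,
\[
\sigma_h \circ \sigma_\gamma \;=\; \sigma_\gamma \circ \sigma_{\gamma^{-1} h \gamma}.
\]
This identity is a direct consequence of the anti-homomorphism relation $\sigma_{\gamma_1 \gamma_2} = \sigma_{\gamma_2}\circ \sigma_{\gamma_1}$ applied to the equality $\gamma h = (\gamma^{-1} h \gamma)\,\gamma$, combined with the normality $\gamma^{-1} h \gamma \in H$. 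Independence of the chosen lift of $\bar\gamma$ is automatic: replacing $\gamma$ by $h\gamma$ with $h \in H$, the relation $\sigma_{h\gamma} = \sigma_\gamma \circ \sigma_h$ together with $\sigma_h = \mathrm{id}_{X_1} \times \Spec(h)$ shows that both lifts descend to the same morphism $(\sigma_1)_{\bar\gamma}$. Multiplicativity $(\sigma_1)_{\bar{\gamma_1}\bar{\gamma_2}} = (\sigma_1)_{\bar{\gamma_2}} \circ (\sigma_1)_{\bar{\gamma_1}}$ then follows from uniqueness of descent applied to $\sigma_{\gamma_1\gamma_2} = \sigma_{\gamma_2} \circ \sigma_{\gamma_1}$. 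The continuity requirement of Definition \ref{defstructure} is witnessed by the tautological $\Bbbk_1$-form $X_1$ of itself, so $\sigma_1$ is indeed a $\Bbbk$-structure on the $\Bbbk_1$-variety $X_1$, and the displayed diagram commutes by construction.

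For the last assertion, once $\sigma_1$ is constructed the descent problem is reduced to a \emph{finite} Galois one. Applying the effectiveness of finite Galois descent in the category of quasi-projective (resp.\ affine) varieties (\cite[Corollaire 7.9]{SGA1}) to $(X_1,\sigma_1)$ produces a quasi-projective (resp.\ affine) $\Bbbk$-variety $X_0$ together with a $\Gal(\Bbbk_1/\Bbbk)$-equivariant isomorphism $(X_0)_{\Bbbk_1} \cong X_1$. Base changing this isomorphism from $\Bbbk_1$ to $\Bbbk'$ and composing with the given $(X_1)_{\Bbbk'} \cong X$ yields a $\Bbbk'$-isomorphism $(X_0)_{\Bbbk'} \cong X$; its $\Gamma$-equivariance with respect to the canonical $\Gamma$-action $\gamma \mapsto \mathrm{id} \times \Spec(\gamma)$ on the left and $\sigma$ on the right follows from the defining relation $\sigma_\gamma = (\sigma_1)_{\bar\gamma} \times \Spec(\gamma)$ verified in the first step.

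The main obstacle will be bookkeeping around the twisted equivariance condition: one has to juggle the right-action convention $\sigma_{\gamma_1\gamma_2} = \sigma_{\gamma_2}\circ \sigma_{\gamma_1}$, the contravariance of $\Spec(-)$, and the conjugation by $\gamma$ on $H$, all simultaneously. No single step is conceptually hard, but a careless sign or side convention would break the descent criterion.
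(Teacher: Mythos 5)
Your proposal is correct and follows essentially the same route as the paper, which only sketches the argument (construct $\sigma_1$ by fpqc descent along $\Spec(\Bbbk')\to\Spec(\Bbbk_1)$, then apply finite Galois descent to $(X_1,\sigma_1)$ to produce $X_0$); your write-up simply fills in the details of that sketch. One small bookkeeping slip: since $\gamma h=(\gamma h\gamma^{-1})\gamma$ and $\sigma_{\gamma_1\gamma_2}=\sigma_{\gamma_2}\circ\sigma_{\gamma_1}$, the twisted-equivariance identity should read $\sigma_h\circ\sigma_\gamma=\sigma_\gamma\circ\sigma_{\gamma h\gamma^{-1}}$ rather than with $\gamma^{-1}h\gamma$, which is harmless here because normality of $H$ is all that is used.
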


\begin{proof} 
We give a sketch of the proof. The construction of  the $\Bbbk$-structure $\sigma_1$ is based on fpqc descent (see \cite[Theorem 6]{Neron}). 
If $X$ is quasi-projective (resp. affine), then so is $X_1$ by  \cite[Proposition 14.53 and 14.57]{Gortz}. Then, we use classical results of finite Galois  descent: by \cite[Corollaire 7.7]{SGA1}, the desired $\Bbbk$-form $X_0$ of $X$  exists.  
\end{proof}

\begin{lemma} \label{ForAll}
If $\sigma$ is a $\Bbbk$-structure on a quasi-projective $\Bbbk'$-variety $X$, then for all  finite  Galois extension $\Bbbk_1/\Bbbk$ in $\Bbbk'$ there exists a $\Bbbk_1$-form $X_1$ of the $\Bbbk'$-variety $X$ such that the restriction of $\sigma$ to $\Gal(\Bbbk'/\Bbbk_1)$ coincides with the natural $\Gal(\Bbbk'/\Bbbk_1)$-action on $(X_1)_{\Bbbk'}  \cong X$. 
\end{lemma}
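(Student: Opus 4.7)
The plan is to deduce this assertion directly from the last statement of Lemma \ref{LinkFinite}, which produces, from the existence of \emph{some} finite Galois extension $\Bbbk_2/\Bbbk$ in $\Bbbk'$ trivialising $\sigma$, a bona fide $\Bbbk$-form $X_0$ of $X$ in the category of quasi-projective varieties, together with a $\Gamma$-equivariant isomorphism $\varphi : (X_0)_{\Bbbk'} \xrightarrow{\sim} X$, where $\Gamma$ acts on the source by the canonical semilinear action $\gamma \mapsto id_{X_0} \times \Spec(\gamma)$ and on the target by $\sigma$.

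Given an arbitrary finite Galois extension $\Bbbk_1/\Bbbk$ in $\Bbbk'$, the natural candidate for the $\Bbbk_1$-form is
\begin{equation*}
X_1 := (X_0)_{\Bbbk_1} = X_0 \times_{\Spec(\Bbbk)} \Spec(\Bbbk_1).
\end{equation*}
By transitivity of base change one has a canonical isomorphism $(X_1)_{\Bbbk'} = (X_0)_{\Bbbk_1} \times_{\Spec(\Bbbk_1)} \Spec(\Bbbk') \cong (X_0)_{\Bbbk'}$, and composing with $\varphi$ yields an isomorphism $\psi : (X_1)_{\Bbbk'} \xrightarrow{\sim} X$ of $\Bbbk'$-varieties, which certifies $X_1$ as a $\Bbbk_1$-form of $X$.

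It remains to check the compatibility of $\sigma$ with the natural action. By construction, the canonical $\Gal(\Bbbk'/\Bbbk_1)$-action on $(X_1)_{\Bbbk'}$ is $\gamma \mapsto id_{X_1} \times \Spec(\gamma)$, which, under the identification $(X_1)_{\Bbbk'} \cong (X_0)_{\Bbbk'}$, is precisely the restriction to $\Gal(\Bbbk'/\Bbbk_1)$ of the canonical $\Gamma$-action $\gamma \mapsto id_{X_0} \times \Spec(\gamma)$ on $(X_0)_{\Bbbk'}$. The $\Gamma$-equivariance of $\varphi$ then transfers this restricted action to the restriction $\sigma|_{\Gal(\Bbbk'/\Bbbk_1)}$ on $X$, which is exactly what is required.

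There is really no substantial obstacle once Lemma \ref{LinkFinite} is in hand; the whole content of Lemma \ref{ForAll} is the observation that the existence of \emph{a single} trivialising finite extension $\Bbbk_2$ (the condition built into Definition \ref{defstructure}) automatically upgrades, via the descended $\Bbbk$-form $X_0$, to a trivialisation over every finite Galois subextension of $\Bbbk'/\Bbbk$, simply by further base change. The mild point to be careful about is keeping track of which $\Gamma$-action lives on which base change and checking their compatibility under the isomorphism $\psi$; this is routine once one uses the canonical isomorphisms of iterated fibre products.
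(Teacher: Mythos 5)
Your proof is correct, and it takes a slightly different (and cleaner) route than the paper's. The paper fixes an arbitrary finite Galois extension $\Bbbk_2/\Bbbk$, chooses a finite Galois extension $\Bbbk_3/\Bbbk$ containing both $\Bbbk_2$ and the trivialising field $\Bbbk_1$ from Definition \ref{defstructure}, equips the $\Bbbk_3$-form $X_3:=(X_1)_{\Bbbk_3}$ with a $\Bbbk$-structure via the first part of Lemma \ref{LinkFinite}, and then descends from $\Bbbk_3$ to $\Bbbk_2$ by finite Galois descent (SGA1, Corollaire 7.7). You instead invoke the ``furthermore'' clause of Lemma \ref{LinkFinite} once, descend all the way to a $\Bbbk$-form $X_0$ with a $\Gamma$-equivariant isomorphism $(X_0)_{\Bbbk'}\cong X$, and then simply base change up to the desired subfield, setting $X_1:=(X_0)_{\Bbbk_1}$; the required compatibility of the natural $\Gal(\Bbbk'/\Bbbk_1)$-action with $\sigma|_{\Gal(\Bbbk'/\Bbbk_1)}$ is then the standard compatibility of the canonical semilinear actions under iterated base change, exactly as you note. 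There is no circularity, since the proof of Lemma \ref{LinkFinite} does not use Lemma \ref{ForAll}. What your approach buys is the elimination of the auxiliary field $\Bbbk_3$ and of one descent step per subextension: a single descent to $\Bbbk$ handles all finite Galois subextensions uniformly. Both arguments use quasi-projectivity in the same essential way (it is needed for the effectivity of descent in SGA1), so neither is more general than the other.
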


\begin{proof}
We give a sketch of the proof.  
Since $\sigma$ is a $\Bbbk$-structure on $X$, there exists a finite Galois extension $\Bbbk_1 / \Bbbk$ in $\Bbbk'$   and a $\Bbbk_1$-form $X_1$ of the $\Bbbk'$-variety $X$. 
Let $\Bbbk_2/\Bbbk$ be another finite Galois extension in $\Bbbk'$.  There exists a finite Galois extension $\Bbbk_3/\Bbbk$  that contains $\Bbbk_1$ and $\Bbbk_2$   \cite[Lemmas 0EXM and 09DT]{Stack}. We obtain a $\Bbbk_3$-form $X_3 := (X_1)_{\Bbbk_3}$ of $X$, and by Lemma \ref{LinkFinite}, there exists a $\Bbbk$-structure $\sigma_3$ on $X_3$. Since $\Bbbk_3/\Bbbk_2$ is a finite Galois extension, by \cite[Corollaire 7.7]{SGA1}, the desired $\Bbbk_2$-form $X_2$ of $X$  exists.  
\end{proof}

\section{Tori} \label{Appendix2}

\subsection{Factorization by a finite Galois extension that splits the $\Bbbk$-torus} \label{Appendix2Split}

We pursue Remark \ref{representation}. Let $\Bbbk_1/\Bbbk$ be a finite Galois extension that splits $(\T, \tau)$ and let $H:= \Gal(\Bbbk'/\Bbbk_1)$ be the absolute Galois group of $\Bbbk'/\Bbbk_1 $.  By Lemmas \ref{LinkFinite} and \ref{ForAll}, there exists $\Bbbk_1$-form $\T_1$ of $\T$ and a $\Bbbk$-group structure $\tau_1$ on $\T_1$ such that the following diagram commutes for all $\gamma \in \Gamma$
\begin{center}
\begin{tikzpicture}
  \matrix (m) [matrix of math nodes,row sep=1.5em,column sep=3em,minimum width=2em]
  {
T_1 \times_{\Spec \left( \Bbbk_1 \right)} \Spec \left( \Bbbk' \right) & \T \\
T_1 \times_{\Spec \left( \Bbbk_1 \right)} \Spec \left( \Bbbk' \right) & \T \\
};
  \path[-stealth]
    (m-1-1) edge node [above] {\small $ \cong    $} (m-1-2)
            edge  node [left]  {\small ${\tau_1}_{\gamma |_{\Bbbk_1}} \times \Spec(\gamma)$ } (m-2-1)
    (m-2-1) edge node [above] {\small   $\cong $ } (m-2-2)
		(m-1-2) edge node [right] { \small  $\tau_{\gamma} $ } (m-2-2);
\end{tikzpicture}
\end{center}
Since $\Bbbk_1$ splits $(\T, \tau)$, we can assume $\T_1 = \Spec(\Bbbk_1[M])$. Therefore, the $H$-action on $M$ is trivial and the $\Gamma$-action on $M$  factorizes:
\begin{center}
\begin{tikzpicture}
  \matrix (m) [matrix of math nodes,row sep=0.75em,column sep=3em,minimum width=2em]
  {
              \Gamma          &             \Gl(M)               \\
               \Gamma /  H  \cong \Gal \left( \Bbbk_1/\Bbbk \right)       &                         \\};
  \path[-stealth]
    (m-1-1) edge node [above] {\small $\tilde{\tau}$} (m-1-2)
		        edge   (m-2-1)   
    (m-2-1) edge node [below] {\small $\tilde{\tau_1}$} (m-1-2);
\end{tikzpicture}
\end{center}
This means that   the profinite group $\Gamma$ acts on $M$ as the finite Galois group $\Gal(\Bbbk_1/\Bbbk)$, and this action comes from the $\Gal( \Bbbk_1/\Bbbk)$-action on $\T_1$. Furthermore, since the kernel of $\hat{\tau}_1$ is a normal subgroup of $\Gal(\Bbbk_1/\Bbbk)$, by the Galois correspondence, there exists a finite Galois extension $\Bbbk_2/\Bbbk$ in $\Bbbk_1$ such that the induced map $\Gal(\Bbbk_2/\Bbbk) \cong \Gal(\Bbbk_1/\Bbbk)/\text{Ker}(\hat{\tau}_1) \hookrightarrow \Gl(M)$ is injective. Therefore, the corresponding $\Bbbk_2$-torus $\T_2$ is split and endowed with a $\Bbbk$-group structure $\tau_2$ as in Lemma \ref{ForAll}.

\subsection{Morphisms of tori and exact sequences of lattices} \label{Appendix2Morphism}

Let  $(\T = \Spec(\Bbbk'[M]), \tau)$ be a subtorus of the $\Bbbk$-torus $( \Gkp^n = \Spec(\Bbbk'[M']), \tau')$. The inclusion $\T \hookrightarrow \Gkp^n$ induces   a surjective lattice homomorphism $M' \to M$.
 Let  ${M}_Y$ be the kernel of this homomorphism, it is  a sublattice of $M'$.  Moreover, the $\Gamma$-action  $\tilde{\tau}'$ on $M'$ induces a $\Gamma$-action $\tilde{\tau}_Y$ on $M_Y$. Let ${\tau}_Y$ be the induced $\Bbbk$-group structure on $\T_Y = \Spec(\Bbbk'[M_Y])$. The next diagram of algebraic $\Bbbk'$-groups commutes for all $\gamma \in \Gamma$:
\begin{center}
\begin{tikzpicture}
\matrix (m) [matrix of math nodes,row sep=0.7em,column sep=5em,minimum width=2em]  {
	1^{\vphantom{n}}_{\vphantom{,}}   & \T^{\vphantom{n}}_{\vphantom{,}}   & \Gkp^n & {\T_Y}^{\vphantom{n}}_{\vphantom{,}}  & 1^{\vphantom{n}}_{\vphantom{,}}    \\
 	1^{\vphantom{n}}_{\vphantom{,}}   & \T^{\vphantom{n}}_{\vphantom{,}}   & \Gkp^n & {\T_Y}^{\vphantom{n}}_{\vphantom{,}}  & 1^{\vphantom{n}}_{\vphantom{,}}    \\};
 \path[-stealth]
  	(m-1-1) edge    (m-1-2)
		(m-1-2) edge    (m-1-3)
		(m-1-3) edge    (m-1-4)
		(m-1-4) edge    (m-1-5)
		(m-2-1) edge    (m-2-2)
		(m-2-2) edge    (m-2-3)
		(m-2-3) edge    (m-2-4)
		(m-2-4) edge    (m-2-5)
		(m-1-2) edge node [right] {\small  $ {\tau}_{\gamma} $ }   (m-2-2)
		(m-1-3) edge node [right] {\small  $ {\tau'}_{\gamma} $ }   (m-2-3)
		(m-1-4) edge node [right] {\small  $ {\tau_Y}_{\gamma} $ }   (m-2-4) ;
	\end{tikzpicture}
\end{center} 
There exists an injective morphism $F : N \to N'$ and  a surjective homomorphism $P : N' \to N_Y$, and the following   diagrams of free $\Z$-modules commute for all $\gamma \in \Gamma$:  
\begin{center}
\begin{tikzpicture}
  \matrix (m) [matrix of math nodes,row sep=0.7em,column sep=7em,minimum width=2em]
  {
	0_{\vphantom{Y}}{\vphantom{'}}   & N_{\vphantom{Y}}{\vphantom{'}}   & N'_{\vphantom{Y}}  & N_Y{\vphantom{'}} & 0_{\vphantom{Y}}{\vphantom{'}}     \\
	0_{\vphantom{Y}}{\vphantom{'}}   & N_{\vphantom{Y}}{\vphantom{'}}    & N'_{\vphantom{Y}}  & N_Y{\vphantom{'}} & 0_{\vphantom{Y}}{\vphantom{'}}     \\};
 \path[-stealth]
   	(m-1-1) edge    (m-1-2)
		(m-1-2) edge node [above] {\small $F$ }   (m-1-3)
		(m-1-3) edge node [above] {\small $P$ }     (m-1-4)
		(m-1-4) edge    (m-1-5)
		(m-2-1) edge    (m-2-2)
		(m-2-2) edge  node [above] {\small $F$ }    (m-2-3)
		(m-2-3) edge  node [above] {\small $P$ }    (m-2-4)
		(m-2-4) edge    (m-2-5)
		(m-1-2) edge node [right] {\small  $  \hat{\tau }_{\gamma}$ }   (m-2-2)
		(m-1-3) edge node [right] {\small  $ \hat{\tau}'_{\gamma}  $ }   (m-2-3)
		(m-1-4) edge node [right] {\small  $ \hat{\tau_Y}_{\gamma}$ }   (m-2-4)
 ;
	\end{tikzpicture}
\end{center}
\begin{center}
\begin{tikzpicture}
  \matrix (m) [matrix of math nodes,row sep=0.7em,column sep=7em,minimum width=2em]
  {
	0_{\vphantom{Y}}{\vphantom{'}}   & M_Y{\vphantom{'}}  & M'_{\vphantom{Y}}  & M_{\vphantom{Y}}   & 0_{\vphantom{Y}}{\vphantom{'}}     \\
	0_{\vphantom{Y}}{\vphantom{'}}   & M_Y{\vphantom{'}}  & M'_{\vphantom{Y}}  & M_{\vphantom{Y}}   & 0_{\vphantom{Y}}{\vphantom{'}}     \\};
 \path[-stealth]
   	(m-1-1) edge    (m-1-2)
		(m-1-2) edge   node [above] {\small $P^*$ }   (m-1-3)
		(m-1-3) edge   node [above] {\small $F^*$ }  (m-1-4)
		(m-1-4) edge    (m-1-5)
		(m-2-1) edge    (m-2-2)
		(m-2-2) edge   node [above] {\small $P^*$ }    (m-2-3)
		(m-2-3) edge   node [above] {\small $F^*$ }    (m-2-4)
		(m-2-4) edge    (m-2-5)
		(m-1-2) edge node [right] {\small  $ \tilde{\tau_Y}_{\gamma}$ }   (m-2-2)
		(m-1-3) edge node [right] {\small  $ \tilde{\tau}'_{\gamma}  $ }   (m-2-3)
		(m-1-4) edge node [right] {\small  $ \tilde{\tau}_{\gamma}$ }   (m-2-4)
 ;
	\end{tikzpicture}
\end{center}
There always exists a section $s^* : M \to M' $ (i.e. $F^* \circ s^* =id_M$)  \cite[Proposition A.3.1]{Neron}, but not always a $\Gamma$-equivariant one. Therefore, we obtain a section $\T_Y \to \Gkp^n $, but not always a $\Gamma$-equivariant one. In other words, $\Gkp^n \cong \T \times \T_Y$, but this isomorphism is not always $\Gamma$-equivariant.

\section{Torus actions} \label{Appendix2TorusActions}

The next propositions is a direct consequence of fpqc descent in the category of varieties endowed with a torus action (see \cite[Corollaire 7.9]{SGA1}). 

\begin{proposition}  \label{defaction}
Let $T $ be a $\Bbbk$-torus. There is a one-to-one correspondence between     quasi-projective $\Bbbk$-varieties    endowed with a $T$-action  and   tuples $(\T, \tau , X, \sigma , \mu)$   consisting of: 
\begin{enumerate}[leftmargin=0.75cm, label=(\roman*)]
\item   a $\Bbbk'$-torus $\T$ endowed with a $\Bbbk$-group structure $\tau$ such that $ \T/\{ \tau_{\gamma} \ | \ \gamma \in \Gamma \} \cong T $;
\item a quasi-projective $\Bbbk'$-variety $X$ endowed with a $\Bbbk$-structure $\sigma$; 
\item    an  action $\mu : \T \times X \to X$   such that the following diagram commutes for all $\gamma \in \Gamma$:
\begin{center}
\begin{tikzpicture}
\matrix (m) [matrix of math nodes,row sep=1em,column sep=7em,minimum width=2em]
 { \T \times  X   &    X   \\
   \T \times  X   &    X   \\};
 \path[-stealth]
    (m-1-1) edge node  [above]{\small $\mu $} (m-1-2)
            edge node  [left]{\small ${\tau}_{\gamma}  \times {\sigma }_{\gamma}  $} (m-2-1)
    (m-1-2) edge node  [right]{\small ${\sigma_{\gamma} } $}  (m-2-2)
    (m-2-1) edge node  [below]{\small $\mu$} (m-2-2);
\end{tikzpicture}
\end{center}
\end{enumerate}
\end{proposition}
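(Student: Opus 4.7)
My plan is to apply the equivalence of categories of Proposition \ref{eqcat} (and its group-structure analogue) three times: once to descend the acting torus, once to descend the underlying variety, and once to descend the action morphism; the final ingredient is a verification that the action axioms descend, which follows from functoriality of the equivalence on products. The compatibility square in item (iii) is nothing other than the statement that $\mu$ is a $\Bbbk$-morphism in the sense of Definition \ref{defstructure}(iv) between $\T \times X$, endowed with the product $\Bbbk$-structure $\tau \times \sigma$, and $(X, \sigma)$; once this is recognized, fpqc descent takes over.

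The forward direction will be a routine base-change. Given a quasi-projective $\Bbbk$-variety $Z$ endowed with a $T$-action $m : T \times Z \to Z$, I would set $X := Z_{\Bbbk'}$, $\T := T_{\Bbbk'}$, and $\mu := m \times_{\Spec(\Bbbk)} \Spec(\Bbbk')$. Endowing $X$ and $\T$ with their canonical $\Bbbk$-structure and $\Bbbk$-group structure given by $\gamma \mapsto id \times \Spec(\gamma)$ (see Proposition \ref{eqcat}), the commutativity of the diagram in (iii) is immediate because $\mu$ is obtained by base change from a $\Bbbk$-morphism. Moreover, since $\T$ is the base change of $T$, we have $\T/\{\tau_\gamma \ | \ \gamma \in \Gamma\} \cong T$, verifying condition (i).

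For the reverse direction, starting from a tuple $(\T, \tau, X, \sigma, \mu)$, I would first use Proposition \ref{eqcat} to obtain a quasi-projective $\Bbbk$-variety $Z := X/\Gamma$ and, by the group-structure version discussed after Definition \ref{defGroupStructure}, a $\Bbbk$-torus, which is canonically identified with $T$ by hypothesis (i). The product $\T \times X$ carries the product $\Bbbk$-structure $\gamma \mapsto \tau_\gamma \times \sigma_\gamma$, and by functoriality of descent (applied to the fiber product), its descent is canonically $T \times Z$. The commutativity of the diagram in (iii) is exactly the statement that $\mu$ is a $\Bbbk$-morphism from $(\T \times X, \tau \times \sigma)$ to $(X, \sigma)$, and so by Proposition \ref{eqcat} it descends to a morphism $m : T \times Z \to Z$.

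The remaining point is to show that $m$ is an action, i.e.\ that it satisfies associativity and the unit axiom. This will follow from the fact that these axioms for $\mu$ are expressed by commutative diagrams of $\Bbbk$-morphisms between quasi-projective $\Bbbk'$-varieties equipped with compatible $\Bbbk$-structures, and the equivalence of Proposition \ref{eqcat} is faithful, so commutativity is preserved upon passing to $\Bbbk$-models. I expect the main subtlety, though a purely formal one, to be checking that the two $\Bbbk$-structures on $\T \times \T \times X$ and on $\T \times X$ used to express associativity agree with those coming from the product of descent data; this is a consequence of the fact that the equivalence of Proposition \ref{eqcat} commutes with fiber products, which is itself a standard consequence of the effectiveness of fpqc descent cited from \cite[Corollaire 7.9]{SGA1}. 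Finally, one checks that the assignment is inverse to the forward construction up to isomorphism, yielding the claimed one-to-one correspondence.
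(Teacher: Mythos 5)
Your proposal is correct and follows essentially the same route as the paper, which gives no written proof beyond observing that the statement is a direct consequence of fpqc descent in the category of varieties endowed with a torus action (citing \cite[Corollaire 7.9]{SGA1}). Your elaboration --- recognizing the square in (iii) as the statement that $\mu$ is a $\Bbbk$-morphism for the product $\Bbbk$-structure, descending it via Proposition \ref{eqcat}, and checking that the action axioms descend because the equivalence commutes with fiber products and is faithful --- is exactly the content that this citation is meant to encapsulate.
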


Let  $\T=\Spec(\Bbbk[M])$  be a split torus acting on an affine variety $X$.  Recall that the coordinate algebra $\Bbbk[X]$ of $X$ is $M$-graded. From Proposition \ref{defaction}, we get the next lemma that is used in the proof of Proposition \ref{ToricDown}.

\begin{lemma} \label{IsomGradedStructure}  
Let $(\T = \Spec(\Bbbk'[M]), \tau)$ be a $\Bbbk$-torus  acting   on the affine   $\Bbbk$-variety $(X , \sigma )$. Let  $\omega_M$ be the weight cone of the $\T$-action on $X$.  Then $\tilde{\tau}(\omega_M) = \omega_M$ and for all $m \in M$ and $\gamma \in \Gamma$
\begin{equation*}
{\sigma}_{\gamma}^{\sharp} \left(\Bbbk'[X]_m \right) = \Bbbk'[X]_{\tilde{\tau}_{\gamma}(m)}.
\end{equation*}
\end{lemma}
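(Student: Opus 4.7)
My plan is to translate the compatibility diagram in Proposition \ref{defaction} into a cocycle identity between $\mu^\sharp$ and $\sigma_\gamma^\sharp$, and then read off the grading statement by applying both sides to a semi-invariant.

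First I would recall from Remark \ref{representation} that the $\Bbbk$-group structure $\tau$ acts on $\Bbbk'[M]$ by $\tau_\gamma^\sharp(a_m \chi^m) = \gamma(a_m)\chi^{\tilde{\tau}_\gamma(m)}$; in particular $\tau_\gamma^\sharp(\chi^m)=\chi^{\tilde{\tau}_\gamma(m)}$. Taking comorphisms in the square of Proposition \ref{defaction}, the relation $\mu \circ (\tau_\gamma \times \sigma_\gamma) = \sigma_\gamma \circ \mu$ becomes
\begin{equation*}
(\tau_\gamma^\sharp \otimes \sigma_\gamma^\sharp) \circ \mu^\sharp \;=\; \mu^\sharp \circ \sigma_\gamma^\sharp.
\end{equation*}

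Next, given $f \in \Bbbk'[X]_m$, by definition $\mu^\sharp(f) = \chi^m \otimes f$. Applying the displayed identity to $f$, I compute
\begin{equation*}
\mu^\sharp\bigl(\sigma_\gamma^\sharp(f)\bigr)
= (\tau_\gamma^\sharp \otimes \sigma_\gamma^\sharp)(\chi^m \otimes f)
= \chi^{\tilde{\tau}_\gamma(m)} \otimes \sigma_\gamma^\sharp(f),
\end{equation*}
which is exactly the condition $\sigma_\gamma^\sharp(f) \in \Bbbk'[X]_{\tilde{\tau}_\gamma(m)}$. This gives the inclusion $\sigma_\gamma^\sharp(\Bbbk'[X]_m) \subseteq \Bbbk'[X]_{\tilde{\tau}_\gamma(m)}$. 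The reverse inclusion follows by applying the same argument to $\gamma^{-1}$ and using that $\sigma_\gamma^\sharp$ is a $\Bbbk$-linear bijection with inverse $\sigma_{\gamma^{-1}}^\sharp$ (up to the convention $\sigma_{\gamma_1\gamma_2}=\sigma_{\gamma_2}\circ\sigma_{\gamma_1}$), giving the claimed equality of homogeneous components.

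Finally, the invariance $\tilde{\tau}_\gamma(\omega_M) = \omega_M$ is immediate: the weight monoid $\{m \in M : \Bbbk'[X]_m \neq 0\}$ is permuted by each $\tilde{\tau}_\gamma$ (since $\sigma_\gamma^\sharp$ is bijective), hence so is the cone $\omega_M$ it spans in $M_\Q$. I do not anticipate any serious obstacle here; the only subtlety is being careful with the contravariant convention $\tau_{\gamma_1\gamma_2} = \tau_{\gamma_2}\circ\tau_{\gamma_1}$ when invoking bijectivity of $\sigma_\gamma^\sharp$, but this is harmless.
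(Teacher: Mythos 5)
Your proof is correct and follows essentially the same route as the paper: both apply the comorphism identity $(\tau_\gamma^{\sharp}\otimes\sigma_\gamma^{\sharp})\circ\mu^{\sharp}=\mu^{\sharp}\circ\sigma_\gamma^{\sharp}$ from Proposition \ref{defaction} to a homogeneous element $f\in\Bbbk'[X]_m$ to get the inclusion $\sigma_\gamma^{\sharp}(\Bbbk'[X]_m)\subseteq\Bbbk'[X]_{\tilde{\tau}_\gamma(m)}$, and then obtain equality by writing $g=\sigma_\gamma^{\sharp}(\sigma_{\gamma^{-1}}^{\sharp}(g))$. The deduction that $\tilde{\tau}_\gamma$ permutes the weight monoid and hence fixes $\omega_M$ is likewise the intended conclusion.
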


\begin{proof}    
Let  $m \in M$, $\gamma \in \Gamma$, and let $f \in \Bbbk'[X]_m$. We obtain from the diagram of Proposition \ref{defaction}; 
\begin{equation*}
 \left( \mu^{\sharp}  \circ {\sigma }_{\gamma}^{\sharp}  \right)(f)  =  \left(  \left( {\tau}_{\gamma}^{\sharp}  \times {\sigma}_{\gamma}^{\sharp}  \right) \circ \mu^{\sharp} \right)(f) 
=  {\tau}_{\gamma}^{\sharp}  \left( \chi^m \right) \otimes  {\sigma}_{\gamma}^{\sharp}  (f) 
=   \chi^{\tilde{\tau}_{\gamma}(m)}  \otimes   {\sigma}_{\gamma}^{\sharp} (f). 
\end{equation*}
Hence  ${\sigma}_{\gamma}^{\sharp}(\Bbbk'[X]_m) \subset \Bbbk'[X]_{\tilde{\tau}_{\gamma}(m)} $.  Moreover, if $g \in \Bbbk'[X]_{\tilde{\tau}_{\gamma}(m)}$, then $g = {\sigma}_{\gamma}^{\sharp}({\sigma}_{\gamma^{-1}}^{\sharp}(g))$.
\end{proof}


\end{document}